\newcommand{\st}{\textnormal{s.t.}}
\newcommand{\PP}{\mathbb{P}}
\newcommand{\EE}{\mathbb{E}}
\newcommand{\RR}{\mathbb{R}}
\newcommand{\X}{\mathcal{X}}
\newcommand{\tr}{\textup{tr}}
\newtheorem{theorem}{Theorem}
\newtheorem{proposition}{Proposition}
\newtheorem{lemma}{Lemma}
\newtheorem{defi}{Definition}
\newtheorem{assume}{Assumption}
\begin{document}

%
%
%
%

\font\myfont=cmr12 at 15pt
\title{{\myfont A Decision Rule Approach for Two-Stage Data-Driven Distributionally Robust Optimization Problems with Random Recourse}}
\author{ Xiangyi Fan and Grani A. Hanasusanto\\ Graduate Program in Operations Research and Industrial Engineering\\ The University of Texas at Austin, USA}
\date{ }
\maketitle

\begin{abstract} 
We study two-stage stochastic optimization problems with random recourse, where the adaptive decisions are multiplied with the uncertain parameters in both the objective function and the constraints. To mitigate the computational intractability of infinite-dimensional optimization, we propose a scalable approximation scheme via piecewise linear and piecewise quadratic decision rules. We then develop a data-driven distributionally robust framework with two layers of robustness to address distributionally uncertainty. The emerging optimization problem can be reformulated as an exact copositive program, which admits tractable approximations in semidefinite programming. We design a decomposition algorithm where smaller-size semidefinite programs can be solved in parallel, which further reduces the runtime. Lastly, we establish the performance guarantees of the proposed scheme and demonstrate its effectiveness through numerical examples.\\
\textit{Key words}: Distributionally robust optimization; random  recourse; piecewise decision rules; copositive programming; decomposition algorithm.
\end{abstract}

\section{Introduction}



Two-stage  optimization  under uncertainty models decision-making situations in which recourse actions can be taken once the realizations of the uncertain parameters are revealed. The setting is prevalent across many problems in operations management, transportation, and finance. 
The classical two-stage stochastic programs  assume that the uncertain parameters are random with complete knowledge on the joint probability distribution, which is rarely available in practice. Optimizing in view of the discrete empirical distribution based on the historical samples often yields inferior solutions that perform poorly in practice. To mitigate these overfitting effects, recent interest has grown in using the distributionally robust optimization (DRO) methodology~\cite{ DRO-solution-2, two-stage-sampling-robust, DRO-solution-3, DRO-tractable-1, Wass-SDP, DRO-solution-1}. In DRO, we construct an \emph{ambiguity set} of different plausible distributions that are consistent with the available information. Optimal decisions are then made in view of the worst-case probability distribution taken from within the ambiguity set. 
Hence, the DRO model yields  decisions that safely anticipate adverse outcomes and exhibit superior performance in out-of-sample tests. However, despite this promising result, the two-stage DRO problems are generically intractable to solve because they optimize over functions describing the recourse policies \cite{DRO-NPhard}.


Current solution schemes for two-stage DRO problems are focused on problems with fixed recourse, i.e., when the adaptive decisions are \emph{not} multiplied with the random parameters in the second stage. However, many optimization problems under uncertainty in finance \cite{Random-Recourse-Robust-1, Random-Recourse-Stochastic-2, Random-Recourse-Stochastic-4}, energy systems \cite{Random-Recourse-Stochastic-3}, and inventory control \cite{Random-Recourse-Stochastic-1}, etc.,  have \emph{random} (or non-fixed) recourse. In this paper, we consider the popular conservative approximation scheme via decision rules, which restricts the recourse variables to simple functions. For example, one can employ the affine, segregated affine, piecewise affine, or polynomial functions of the uncertain parameters, respectively denoted as linear decision rules (LDR) \cite{LDR1, LDR2, Linear-decision-rule, LDR3,LDR4, LDR5, LDR6,LDR7,LDR8}, segregated linear decision rules \cite{DRO-SLDR2, DRO-SLDR1}, piecewise linear decision rules (PLDR) \cite{PLDR1-NPhard, PLDR3-NPhard, PLDR2}, and polynomial decision rules \cite{PDR-2, PDR-QDR1}. Decision rules are attractive because they usually lead to tractable approximations for problems with fixed recourse. 
However, when the problem has random recourse, even the simplest linear decision rule is already intractable 
 \cite{Linear-decision-rule, Random-Recourse-NPhard}. 

We describe several improved decision rules relevant to the paper. Quadratic decision rules have been applied to obtain tighter approximations by solving conservative semidefinite programs \cite{PDR-QDR1,QDR-2} based on the approximate S-lemma \cite{QDR-Slemma}.  Quadratic decision rules are a class of polynomial decision rules with degree 2, which provide a decent trade-off between solution quality and computational cost. PLDR also provides a tighter approximation than basic LDR. The authors of \cite{PLDR-optimal} show that PLDR is optimal for two-stage adaptive robust optimization problems with fixed recourse when the uncertainty set is a polytope. Nevertheless, constructing the optimal piecewise affine policies is proven to be NP-hard \cite{PLDR4_NPhard}. The recent work \cite{two-stage-sampling-robust} proposes an approximation scheme based on overlapping PLDR for two-stage type-$\infty$ Wasserstein DRO, which can be solved as a linear program or a second-order conic program. The paper~\cite{DRO-solution-3} introduces the event-wise affine adaptations addressing solutions of two-stage DRO with Wasserstein and K-means (Voronoi regions) ambiguity sets. Although highly effective, these methods can only cope with two-stage DRO problems with fixed recourse. 

We now describe existing solution schemes for problems with random recourse. In the robust optimization setting, the paper \cite{MSRO-decision-rules} first derives copositive programming reformulations for the LDR approximations.  
Nevertheless, the resulting decision has the risk of being too conservative as it does not take into account distributional information obtained from  the historical data. At the other extreme, the  paper \cite{Random-Recourse-Stochastic} considers multi-stage stochastic optimization via LDR, where it requires complete knowledge of distributions of the random parameters.  The paper \cite{Wass-SDP} derives  copositive programming approximations for DRO problems with random recourse in the objective function. The reformulations, however, lead to huge semidefinite programming approximations that do not scale to large problem instances. Furthermore, the ambiguity set is based on the Wasserstein metric, whose finite-sample guarantees are known to suffer from the \emph{curse of dimensionality}~\cite{esfahani2018data}.\footnote{A recent result by Gao  \cite{gao2020finite} shows that under some Lipschitz continuity assumptions on the loss function, the curse of dimensionality can be eliminated. However, the assumptions are not satisfied in our two-stage setting because the loss function is not finite whenever the second-stage problem is infeasible. } 

To  the best of our knowledge, we are the first to propose an effective solution method to address two-stage data-driven DRO problems with random recourse. Specifically, we show that in view of the decision rules, the problems are amenable to exact copositive programming reformulations of polynomial size that admit tight, conservative approximations in semidefinite programming \cite{MSRO-decision-rules}. We further derive attractive out-of-sample performance guarantees on the solutions, which do not suffer from the curse of dimensionality. We summarize the paper’s main contributions, as follows:
\begin{enumerate}
    \item 	We apply the enhanced piecewise linear decision rules (PLDR) and piecewise quadratic decision rules (PQDR) to address the generic two-stage DRO problems using a partitioning scheme based on the Voronoi diagram, where the approximation quality is controlled by the number of partitions.  The traditional reformulation technique based on standard convex duality theory is not applicable because of the random recourse setting. We leverage the modern conic programming machinery to derive a concise copositive program (COP), which remains intractable, but admits excellent approximations in semidefinite programming.  
    \item As a byproduct of the proposed decision rule scheme, we construct a tailored ambiguity set with two layers of robustness: the conditional ambiguity sets for the partitions and the uncertainty set for  the marginal probabilities of falling inside different partitions. In view of the decision rules, the conditional ambiguity sets can aptly be defined through the conditional second moments of the random parameters. We then use the $\chi^2$-distance to construct the uncertainty set for the marginal probabilities. Combining the concentration inequalities for second moments and $\chi^2$ statistics, we derive the theoretical finite sample guarantee on the optimal solutions. 
    \item 
    We conduct extensive numerical experiments on network inventory allocation, newsvendor,  medical scheduling, and facility location problems.  We demonstrate that our copositive reformulations together with the inner semidefinite approximations achieve significantly better solution quality than the na\"ive sample-average approximation (SAA), especially with limited sample data. Even in the particular case of having random recourse present only in the objective, the experiment demonstrates that our method distinctly outperforms the state-of-the-art solution scheme proposed in \cite{Wass-SDP}. Additionally, as the number of samples and partitions increases, the performances of our solutions favorably get closer to the optimal ones.  
    \item 	We devise a decomposition algorithm that separates the complexity of optimizing for the first-stage decision, which constitutes a tractable second-order cone program, and the second-stage policy, which comprises several concise copositive programs. The second-stage subproblems are amenable to parallelized solutions that significantly reduce the runtime as the number of partitions increases. We prove that, under some regularity conditions, the algorithm converges in a finite number of iterations. 
    
\end{enumerate}
Two-stage DRO problems have been widely studied in the literature using different ambiguity sets. The paper \cite{DRO-NPhard} studies two-stage DRO problems with a non-data driven ambiguity set based on first- and second-order moments. In  \cite{DRO-solution-2}, the authors propose a modified SAA to approximate the two-stage DRO where the ambiguity set is a confidence region of a goodness-of-fit hypothesis test. The work in \cite{DRO-solution-1} derives an equivalent reformulation of two-stage optimization problems with an $L^{1}$-norm ambiguity set and uses SAA as an approximation. The paper \cite{bayraksan2015data} studies two-stage DRO problems using the $\phi$-divergence ambiguity sets, which lead to tractability but unfortunately unable to provide any finite-sample guarantees. 


 As a global optimization approach, the Benders decomposition algorithm is first proposed in \cite{benders-sp1, benders-sp2} as an efficient solution to the two-stage stochastic optimization problem with a discrete distribution. It utilizes the decomposition of the second-stage problem into separate subproblems for different scenarios, leading to a much faster computation of the recourse decision. The algorithm has been further developed to provide a globally optimal solution of the two-stage robust optimization problem, even with integer decision variables~\cite{benders-rp1, benders-rp2}. In a data-driven setting, the paper~\cite{benders-dro1} reformulates the two-stage DRO problem with Wasserstein metric to a two-stage robust optimization problem and applies the Benders algorithm as the solution scheme. The work \cite{benders-dro2} applies Benders-type  decomposition algorithms to the two-stage DRO problems with binary variables in both the first and second stages, as well as the two-stage linear DRO problems with continuous decision variables.

The rest of this article is organized as follows. In Section \ref{2-stage-DRO}, we derive the exact copositive programming reformulation after applying PLDR and PQDR to the two-stage DRO problems with random recourse. We derive the finite-sample performance guarantee for the solution 
in Section \ref{performance_guarantee}. Section \ref{decomposition_algorithm} develops a decomposition algorithm to solve the large-scale conic problem efficiently. 
Finally, Section \ref{experiments} reports numerical results and demonstrates the efficacy of the proposed method in terms of feasibility, performance, and computational effort. We discuss our conclusions in Section \ref{conclusion}. 

\subsection{Notation and terminology}
For any $\ell \in \mathbb{N}$, we define $[L]$ as the index set $\{1, \dots, L\}$. We denote $\mathbf{e}$ as the vector of all ones and  $\mathbf{e}_i$ as the $i$-th standard vector. The set of non-negative real numbers is denoted as $\RR_{+}$ and the set of vectors $\bm v \in \RR^{N}$ with non-negative elements is given by $\RR^{N}_{+}$. The indicator function of a subset $B$ of a set $A$ is denoted as $\mathbbm 1_{[B \in A]}$ and matrix $\bm 1$ refers to matrix of ones where every element equals to 1. We set the $p$-norm of a vector $\bm v$ as $\| \bm v \|_p$ and the $p$-norm of a matrix $\bm X$ as $\| \bm X \|_p$. We denote the Dirac delta measure as $\delta_{\bm\xi}$, which places a unit mass at $\bm\xi$. The space of symmetric matrices in $\RR^{N \times N}$ is defined as $\mathbb{S}^{N}$. The set of all probability measures supported on $\Xi$ is written as $\mathcal{P} (\Xi) \coloneqq \left\{ \mu \in\mathcal M_+: \int_{\Xi} \mu(d\xi) = 1 \right\}$, where $\mathcal M_+$ denotes the set of nonnegative Borel measures. The trace of a square matrix is described as $\tr(\bm X)$. We use $\bm X \geq \bm 0$ to indicate that $\bm X$ is a component-wise non-negative matrix. Notation $\textup{Rows}(\bm X) \in \mathcal{K}$ indicates that the vectors of rows of matrix $\bm X$ belong to cone $\mathcal{K}$.

The standard second-order cone is defined as $\mathcal{SOC} \in \RR^{S+1}$, i.e., $\bm v \in \mathcal{SOC} \Longleftrightarrow \|(v_1, \dots, v_S)^\top \| \leq~v_{S+1}$. We define the copositive cone as $\mathcal{C}(\RR^N_{+}) \coloneqq \left \{ \bm X \in \mathbb{S}^N: \bm v^\top \bm X \bm v \geq 0 \ \forall \bm v \in \RR^N_{+} \right \}$ and its dual cone the completely positive cone as $\mathcal{C}^*(\RR^N_{+}) \coloneqq \left \{ \bm X \in \mathbb{S}^N: \bm X = \sum_{i=1}^I \bm x^i(\bm x^i)^\top, \ \bm x^i \in \RR^N_{+} \right \}$ where $I$ is a positive integer with unspecified value. In this paper we study the generalized copositive cone $\mathcal{C(K)}$ and its dual cone the generalized completely positive cone $\mathcal{C^*(K)}$ for a closed and convex cone $\mathcal{K} \in \RR^N$. By specifying $\mathcal{K} = \RR_+$ we get $\mathcal{C}(\RR^N_{+})$ and $\mathcal{C}^*(\RR^N_{+})$ respectively. Here we refer to the linear optimization over $\mathcal{C(K)}$ and $\mathcal{C^*(K)}$ as a copositive program and a completely positive program, which is simplified notations of a generalized copositive program or a generalized completely positive program. For any symmetric matrix $\bm X \in \mathbb{S}^N$, $\bm X \succeq \bm 0$ denotes $\bm X$ is positive semidefinite. The notations $\bm X \succeq_{\mathcal{C(K)}} \bm 0$ and $\bm X \succeq_{\mathcal{C^*(K)}} \bm 0$ indicate that $\bm X$ is an element of $\mathcal{C(K)}$ and $\mathcal{C^*(K)}$ respectively, i.e., $\bm X \in \mathcal{C(K)}$ and $\bm X \in \mathcal{C^*(K)}$. In addition, the relation $\bm X \succ_{\mathcal{C(K)}}$ means $\bm X$ is  strictly copositive, i.e., $\bm v^\top \bm X \bm v > 0 \ \forall \bm v \in \mathcal{K}, \ \bm v \neq \bm 0$.

\section{Two-stage DRO with Decision Rules}
\label{2-stage-DRO}

We study the general risk-averse two-stage  linear distributionally robust  optimization problem (DRO) using the Conditional Value-at-Risk (CVaR) measure. In this adaptive optimization problem, a decision maker first selects a here-and-now decision $\bm x \in \X \in \RR^{N_1}$, which gives the immediate cost $\bm c^\top \bm x$. After the realization of the uncertain parameter vector $\bm \xi\in\Xi$, the wait-and-see decision $\bm y(\bm \xi) \in \RR^{N_2}$ that minimizes the second-stage cost $(\bm D \bm\xi)^\top \bm y(\bm \xi)$ is made. The decision maker seeks a safe decision $\bm x$ and policy $\bm y(\bm \xi)$ that perform the best in view of the worst-case CVaR  at level $\delta \in (0,1)$. The problem is formally written as
\begin{equation}
\label{eq:2s-dro1}
\begin{array}{ccl}
J^{\star} \coloneqq &\displaystyle \inf_{\bm x \in \mathcal{X}} & \bm c^\top \bm x + \displaystyle
\sup_{\PP\in\mathcal P} \PP\text{-CVaR}_\delta [Z(\bm x, \bm \xi)]. \\
\end{array}
\end{equation}
Here, $\mathcal P$ is an ambiguity set containing plausible distributions of the uncertain parameter $\bm \xi \in \RR^{S+1}$ with a known support $\Xi$. In this work, we will construct the ambiguity set in a data-driven manner using historical samples $\{ \hat{\bm\xi}_i\}_{i \in [N]}$ of the random vector $\bm\xi$. The min-max formulation \eqref{eq:2s-dro1} implies that the first-stage decision~$\bm x$ is chosen considering the most adverse distribution $\PP\in\mathcal P$ that maximizes the CVaR of the second-stage recourse function $Z(\bm x, \bm \xi)$. Note that when $\delta = 1$, the worst-case CVaR reduces to the worst-case expectation. 

The second-stage recourse function $Z(\bm x, \bm \xi)$ in $\eqref{eq:2s-dro1}$ is defined as the optimal value of the linear program
\begin{equation}
\label{eq:second_stage_value}
\begin{array}{ccl}
Z(\bm x, \bm \xi) \coloneqq & \displaystyle \inf & (\bm D \bm\xi)^\top \bm y \\
& \st & \bm y \in \RR^{N_2}\\
& & \bm T_\ell(\bm x)^\top\bm \xi  \leq \bm (\bm W_\ell\bm\xi)^\top\bm y \quad \forall \ell \in [L],
\end{array}
\end{equation}
where $\bm D \in \RR^{N_2 \times (S+1)}, \ \bm W_\ell \in \RR^{N_2 \times (S+1)}, \ \ell \in [L]$. The matrices $\bm T_\ell(\bm x) \in \RR^{S+1}, \ \ell \in [L]$, are assumed to be affine functions in $\bm x$. Two-stage linear DRO problems  
are NP-hard even with fixed recourse~\cite{DRO-NPhard}. 
Our general structure above involves random recourse in both the objective $(\bm D \bm\xi)^\top \bm y(\bm \xi)$ and the constraints $(\bm W_\ell\bm\xi)^\top\bm y(\bm \xi)$, where the adaptive decision $\bm y(\bm \xi)$ is multiplied  with the uncertain parameter $\bm \xi$. The random recourse introduces significant challenges in addressing the problem. In this paper, we employ the decision rule approach that enables an exact reformulation with tractable approximations. To this end, we first reformulate problem~$\eqref{eq:2s-dro1}$ as a distributionally robust semi-infinite linear program involving a worst-case expectation in the objective function.

\begin{proposition}
The risk-averse two-stage DRO problem \eqref{eq:2s-dro1} can be reformulated as
\begin{equation}
\label{eq:2s-dro2}
\begin{array}{clll}
\displaystyle \inf & \displaystyle \bm c^\top \bm x + \theta + \frac{1}{\delta} \sup_{\PP\in\mathcal P} \EE_\PP [\tau(\bm \xi)] \\
\st & \bm x \in \X, \ \theta \in \RR, \ \bm y: \RR^{S+1} \rightarrow \RR^{N_2}, \ \tau: \RR^{S+1} \rightarrow \RR\\
& \left. 
\begin{aligned}
& \tau(\bm \xi) \geq 0 \\
& \tau(\bm \xi) \geq (\bm D \bm \xi)^\top \bm y (\bm \xi) - \theta \\
& \bm T_\ell(\bm x)^\top\bm \xi  \leq \bm (\bm W_\ell \bm\xi)^\top\bm y(\bm \xi) \quad \forall \ell \in [L] \\
\end{aligned} \right \} \forall \bm\xi \in \Xi,
\end{array}
\end{equation}
where the second-stage decision variables $\bm y$ and $\tau$ are measurable mappings from $\RR^{S+1}$ to $\RR^{N_2}$ and  from $\RR^{S+1}$ to $\RR$, respectively.
\end{proposition}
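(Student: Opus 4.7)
The plan is to combine the Rockafellar--Uryasev variational representation of $\text{CVaR}_\delta$, a minimax interchange, and an epigraphical lift so as to rewrite \eqref{eq:2s-dro1} in the hierarchically separated form \eqref{eq:2s-dro2}. First I would replace the inner risk measure via the identity
$$\PP\text{-CVaR}_\delta[Z(\bm x,\bm\xi)] \;=\; \inf_{\theta\in\RR}\Bigl\{\theta + \tfrac{1}{\delta}\EE_\PP\bigl[(Z(\bm x,\bm\xi)-\theta)^+\bigr]\Bigr\},$$
so that the objective in \eqref{eq:2s-dro1} becomes $\bm c^\top\bm x + \sup_{\PP\in\mathcal P}\inf_{\theta\in\RR}\bigl\{\theta + \tfrac{1}{\delta}\EE_\PP[(Z(\bm x,\bm\xi)-\theta)^+]\bigr\}$.

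To move $\theta$ to the outer minimization as in \eqref{eq:2s-dro2}, I would swap $\inf_\theta$ and $\sup_\PP$ by appealing to Sion's minimax theorem: the function $f(\theta,\PP) \coloneqq \theta + \tfrac{1}{\delta}\EE_\PP[(Z(\bm x,\bm\xi)-\theta)^+]$ is convex in $\theta$ for every fixed $\PP$ (since $(z-\theta)^+$ is convex in $\theta$ and expectation preserves convexity) and affine, hence concave, in $\PP$ for every fixed $\theta$; together with the convexity and weak compactness of the data-driven ambiguity set $\mathcal P$ to be constructed later, these hypotheses yield the strong minimax equality. After the swap, the objective reads $\bm c^\top\bm x + \theta + \tfrac{1}{\delta}\sup_{\PP\in\mathcal P}\EE_\PP[(Z(\bm x,\bm\xi)-\theta)^+]$ with $\theta$ now a scalar first-stage decision.

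Next, I would epigraphically lift the positive part by introducing a measurable mapping $\tau:\RR^{S+1}\to\RR$ subject to the pointwise inequalities $\tau(\bm\xi)\geq 0$ and $\tau(\bm\xi)\geq Z(\bm x,\bm\xi)-\theta$ for every $\bm\xi\in\Xi$; because $\tau$ enters the minimized outer objective positively through $\sup_\PP\EE_\PP[\tau(\bm\xi)]$, the lift is exact at optimum. Finally, using the definition \eqref{eq:second_stage_value} of $Z(\bm x,\bm\xi)$ and a standard measurable-selection argument, the inequality $\tau(\bm\xi)\geq Z(\bm x,\bm\xi)-\theta$ is equivalent to the existence of a measurable recourse policy $\bm y(\cdot)$ that satisfies $\bm T_\ell(\bm x)^\top\bm\xi\leq(\bm W_\ell\bm\xi)^\top\bm y(\bm\xi)$ for every $\ell\in[L]$ together with $\tau(\bm\xi)\geq(\bm D\bm\xi)^\top\bm y(\bm\xi)-\theta$, which are precisely the semi-infinite constraints in \eqref{eq:2s-dro2}. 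The main obstacle is the minimax exchange, whose hypotheses require verifying weak compactness of $\mathcal P$ and integrability of $(Z(\bm x,\bm\xi)-\theta)^+$ under every $\PP\in\mathcal P$; the remaining epigraphical and measurable-selection steps are routine bookkeeping.
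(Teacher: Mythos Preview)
Your proposal is correct and follows essentially the same three-step route as the paper: (i) the Rockafellar--Uryasev representation of $\text{CVaR}_\delta$, (ii) an interchange of $\sup_{\PP}$ and $\inf_{\theta}$, and (iii) an epigraphical lift via $\tau(\bm\xi)$ together with the second-stage feasibility constraints. The only notable difference is the tool invoked for step (ii): you appeal to Sion's theorem and flag weak compactness of $\mathcal P$ as a hypothesis to be verified, whereas the paper cites the stochastic min--max theorem of Shapiro and Kleywegt, which is tailored to exactly this $\sup_\PP\inf_\theta$ structure and sidesteps an explicit compactness discussion. Your treatment of the epigraph and the measurable-selection step is in fact more careful than the paper's, which simply asserts the lift without further comment.
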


\begin{proof}
See the Appendix A.1.
\end{proof}
\noindent The reformulated problem \eqref{eq:2s-dro2} contains random recourse terms in the constraints even if the original two-stage DRO problem \eqref{eq:2s-dro1} only has a random recourse term in the objective. In the following, we focus on addressing the problem \eqref{eq:2s-dro2}. 

Before we describe the proposed decision rule approach, we briefly explain the support set below. The uncertain parameter vector $\bm \xi$ belongs to a support set $\Xi$ defined as a slice of a convex cone $\mathcal{K} \in \RR^{S}\times \RR_{+} $, given by

\begin{equation}
\label{eq:uncertainty-set}
    \Xi \coloneqq \left\{\bm \xi \coloneqq \begin{bmatrix}\bm \zeta\\ \nu \end{bmatrix} \in \mathcal{K}:
    \nu = 1
     \right\}.
\end{equation}

\noindent The restriction of $\bm \xi$ as $[\bm \zeta, 1]^\top$ enables us to simplify any affine function of the primitive parameter vector~$\bm \zeta$ to a linear function of $\bm \xi$. Similarly, it can also represent any quadratic function in a homogenized form. 
 The cone $\mathcal{K}$ is assumed to satisfy the following mild condition.
\begin{assume}\label{assum1}
The cone $\mathcal{K}$ is nonempty, compact, convex and full-dimensional.
\end{assume}
\noindent The support set $\Xi$ in \eqref{eq:uncertainty-set} can model widely used support sets. For instance, we can define a polytope support set by setting 

$$\mathcal{K} \coloneqq \left \{\bm \xi \coloneqq \begin{bmatrix}\bm \zeta\\ \nu \end{bmatrix} \in \RR^{S} \times \RR_+: \bm P \bm \zeta \geq \bm t \nu \right \},$$ with $\bm P \in \RR^{S_p \times S}, \ \bm t \in \RR^{S_p}$. In addition, we can model an ellipsoid or 2-norm ball support set by setting 
$$ \mathcal{K} \coloneqq \left \{\bm \xi \coloneqq \begin{bmatrix} \bm \zeta\\ \nu \end{bmatrix} \in \RR^{S} \times \RR_+: \| \mathbf{R} \bm \zeta \|_2 \leq q \nu \right \},$$
with $\bm R \in \RR^{S_r \times S},\ q \in \RR$.

\subsection{The decision rule framework}
\label{sec:PLDR}
In view of the random recourse setting, we adopt the combination of  piecewise linear decision rules (PLDR) and piecewise quadratic decision rules (PQDR) to conservatively approximate the two-stage problem \eqref{eq:2s-dro1} by restricting the adaptive decisions  $\bm y (\bm \xi)$ and $\tau(\bm \xi)$ to piecewise affine functions and piecewise quadratic functions in the uncertain parameters, respectively. 
To this end, we partition the support set $\Xi$ into $K$ subsets $\Xi_1$,\ldots, $\Xi_K$, and we optimize basic linear or quadratic decision rules in each partition separately. Specifically, the PLDR scheme for the decision variable $\bm y (\bm \xi)$ is given by
\begin{equation*}
\bm y(\bm \xi) = \bm Y_k\bm \xi \quad \forall \bm \xi \in \Xi_k \ \forall k \in [K],
\end{equation*}
 where $\bm Y_k \in \mathbb{R}^{N_2 \times (S+1)}$ is the coefficient of the linear decision rules for partition $k$. 
 
 Next,  we observe in problem \eqref{eq:2s-dro2} that at optimality the second-stage epigraphical variable $\tau(\bm \xi)$ coincides with $\max \left \{ (\bm D \bm \xi)^\top \bm y (\bm \xi) - \theta, 0 \right \}$. The random recourse term $(\bm D \bm \xi)^\top \bm y (\bm \xi)$ constitutes a piecewise quadratic function in $\bm \xi$ after applying PLDR to~$\bm y(\bm \xi)$, and the semi-infinite inequality constraints in~\eqref{eq:2s-dro2} are satisfied if $\tau(\bm \xi)$ exhibits a piecewise quadratic form in $\bm \xi$. Thus, we apply PQDR to the decision variable~$\tau(\bm \xi)$ as 
 \begin{equation*}
 \tau(\bm \xi) = \bm \xi^\top \bm Q_k \bm\xi \quad \forall \bm \xi \in \Xi_k \ \forall k \in [K],
\end{equation*}
where $ \bm Q_k \in \mathbb{R}^{(S+1) \times (S+1)}$ is the coefficient of the quadratic decision rules for partition $k$.

Employing the combination of PLDR and PQDR and applying the law of total expectation yield the conservative piecewise decision rule (PDR) problem
\begin{equation}
\label{eq:pdr}
\begin{array}{ccl}
J^{\textup{PDR}} \coloneqq & \displaystyle \inf & \displaystyle \bm c^\top \bm x + \theta + \frac{1}{\delta} \sup_{\PP\in\mathcal P} \sum_{k \in [K]}  \PP(\bm \xi \in \Xi_k)\EE_\PP[\bm \xi^\top \bm Q_k \bm\xi | \bm\xi\in\Xi_k]  \\
&\st & \bm x \in \X, \ \theta \in \RR, \ \bm Y_k \in \RR^{N_2 \times (S+1)},\ \bm Q_k \in \RR^{(S+1) \times (S+1)}\\
& & \bm{\mathcal{T}}_\ell (\bm x)^\top\bm \xi  \leq (\bm{\mathcal{W}}_\ell \bm\xi)^\top\bm Y_k\bm\xi + \lambda_\ell \bm \xi^\top \bm Q_k \bm \xi + \kappa_\ell \theta \quad \forall \bm\xi \in \Xi_k \ \forall k \in [K] \ \forall \ell \in [L+2],
\end{array}
\end{equation}
where 
\begin{equation*}
\left.\begin{matrix}
\bm{\mathcal{T}}_\ell (\bm x) = \bm T_\ell(\bm x), & \bm{\mathcal{W}}_\ell = \bm W_\ell, & \lambda_\ell = 0, & \kappa_\ell = 0 \quad & \forall \ell \in [L], \\ 
\bm{\mathcal{T}}_\ell (\bm x) = [\bm 0], & \bm{\mathcal{W}}_\ell = [\bm 0], & \lambda_\ell = 1, & \kappa_\ell = 0 \quad & \ell = L+1, \\ 
\bm{\mathcal{T}}_\ell (\bm x) = [\bm 0], & \bm{\mathcal{W}}_\ell = -\bm D, & \lambda_\ell = 1, & \kappa_\ell = 1 \quad & \ell \in L+2. \\ 
\end{matrix}\right.
\end{equation*}

In this work, we construct the partitions of the support set $\Xi$ according to Voronoi regions. Starting with a set of constructor points $\{\bm \xi_k'\}_{k \in [K]}$ obtained independently of the samples $\{ \hat{\bm\xi}_i\}_{i \in [N]}$, we define the partition $\Xi_k$ as the set of all points in $\Xi$ whose Euclidean distance is closer to $\bm \xi_k'$ than any other constructor points. That is to say, for the $k^{\textup{th}}$ partition we have
\begin{equation*}
\begin{array}{rl}
\Xi_k =& \left \{ \bm \xi \in \Xi:\left \| \bm \xi - \bm \xi_k' \right \|_2 \leq \left \| \bm \xi - \bm \xi_i' \right \|_2 \ \forall i \in [K]:i \neq  k \right \} \\
=& \left \{ \bm \xi \in \Xi: 2(\bm \xi_i' - \bm \xi_k')^\top \bm  \xi \leq \bm  \xi_i'^\top\bm \xi_i'  - \bm \xi_k'^\top \bm  \xi_k' \ \forall i \in [K]:i \neq  k \right \} \\
=& \left \{ \bm \xi \in \mathcal{K}_k:
    \mathbf{e}_{S+1}^\top \bm \xi = 1
    \right \},
\end{array}
\end{equation*}
where 
\begin{equation}
\label{eq:Voronoi_partitions_cones}
 \mathcal{K}_k = \left \{ \bm \xi \in \mathcal{K} : 2(\bm \xi_i' - \bm \xi_k')^\top \bm  \xi \leq \bm  \xi_i'^\top\bm \xi_i' - \bm \xi_k'^\top \bm  \xi_k', \ \forall i \in [K]:i \neq  k \right \}
\end{equation}
is a convex cone describing the partition.

\subsection{Distributionally robust model}

The objective function of problem \eqref{eq:pdr} contains two sets of terms: the partition probabilities $\PP(\bm \xi \in \Xi_k)$, $k \in [K]$, and the conditional expectations $\EE_\PP[\bm \xi^\top \bm Q_k \bm\xi | \bm \xi \in \Xi_k]$, $k \in [K]$, whose values depend on the distribution of the uncertain parameter $\bm \xi$. These terms can principally be 
estimated using the empirical distribution $\hat{\PP} = \frac{1}{N} \sum_{i \in [N]} \delta_{\hat{\bm\xi}_i}$ assigning equal mass $1/N$ to all the historical samples. However, this leads to estimation errors and may yield inferior decisions, which perform poorly in practice. To mitigate the overfitting effects,  we design a data-driven ambiguity set with two layers of robustness on the partition probabilities and the conditional expectations. 

\begin{defi} 
\textup{(Ambiguity set).} 
We define the ambiguity set $\mathcal P$ as
\begin{equation}
\label{eq:ambiguity_set}
\mathcal P \coloneqq \left\{\PP \in \mathcal{P} (\Xi):\PP = \sum_{k \in [K]} p_k \PP_k \ \textup{  with  } \  \bm p \in \Delta \textup{ and } \ \PP_k \in \mathcal{P}_k \quad \forall k \in [K]
\right\}.
\end{equation}
The uncertainty set $\Delta$ for the partition probability vector $\bm p$ is defined in terms of $\chi^2$-distance as
\begin{equation}
\label{eq:ambiguity_set-prob}
\Delta \coloneqq \left\{\bm p \in \RR^K_{+} : \mathbf{e}^\top \bm p = 1, \ \sum_{k = 1}^{K} (p_k - \hat{p}_k)^2/p_k \leq \gamma
\right\},
\end{equation}
where $\hat{p}_k = \tfrac{1}{N}\sum_{i \in [N]}\mathbbm{1}(\hat{\bm \xi}_i \in \Xi_k)$, $k \in [K]$. And the ambiguity sets $\mathcal P_k, \ k \in [K]$, for the conditional distributions $\PP_1, \dots, \PP_K$ are given by
\begin{equation}
\label{eq:ambiguity_set-k}
\mathcal P_k \coloneqq \left\{\PP_k \in \mathcal{P} (\Xi_k): \left\| \EE_{\PP_k} [\bm\xi\bm\xi^\top] - \EE_{{\hat{\PP}}_k} [\bm\xi\bm\xi^\top] \right\|_F \leq \epsilon_k
\right\},
\end{equation}
where $\mathcal{I}_k = \left\{ i \in [N]: \hat{\bm \xi}_i \in \Xi_k\right \}$ and $\hat{\PP}_k = \frac{1}{|\mathcal{I}_k|} \sum_{i \in \mathcal{I}_k} \delta_{\hat{\bm \xi}_i}$, $k\in[K]$.
\end{defi}

Any distribution $\PP$ in the ambiguity set $\mathcal P$ is defined as a mixture of the conditional distributions $\PP_1,\ldots,\PP_K$ with weights $p_1,\ldots,p_K$, respectively. The weights are assumed to belong to the uncertainty set $\Delta$. The uncertainty set contains all probability vectors $\bm p$ sufficiently close to the empirical marginal probability vector $\hat{\bm p}$ in terms of the $\chi^2$-distance, which belongs to the class of $\phi$-divergences \cite{phi-divergence}. The set is described by a single parameter $\gamma$ whose value can  theoretically be determined so as to attain a certain out-of-sample performance guarantee. 
Each conditional distribution $\PP_k$ belongs to the ambiguity set $\mathcal P_k$, which is defined as the set of all probability distributions whose second-moment matrix $\EE_{\PP_k} [\bm\xi\bm\xi^\top]$ is within a distance $\epsilon_k$  from the empirical second-moment matrix $\EE_{{\hat{\PP}}_k}[\bm\xi\bm\xi^\top]$ with respect to the Frobenius norm. 
Similarly to $\gamma$, the values of the parameters $\epsilon_1,\ldots,\epsilon_K$ can be theoretically determined to achieve the desired performance guarantees. 

With the ambiguity set $\mathcal{P}$ defined in $\eqref{eq:ambiguity_set}$, the objective function of problem \eqref{eq:pdr} can be rewritten as 
\begin{align}
& \bm c^\top \bm x + \theta + \frac{1}{\delta} \displaystyle \sup_{\PP\in\mathcal P} \sum_{k \in [K]} \PP(\bm \xi \in \Xi_k) \EE_\PP [\bm \xi^\top \bm Q_k \bm\xi | \bm\xi \in \Xi_k] \nonumber  \\
= \ & \bm c^\top \bm x + \theta + \frac{1}{\delta} \displaystyle \sup_{\bm p \in \Delta}  \sup_{\PP_k \in \mathcal{P}_k \forall k} \sum_{k \in [K]} p_k \EE_{\PP_k} [\bm \xi^\top \bm Q_k \bm\xi] \nonumber \\
= \ &  \bm c^\top \bm x + \theta + \frac{1}{\delta} \displaystyle \sup_{\bm p \in \Delta} \sum_{k \in [K]}  p_k \sup_{\PP_k \in \mathcal{P}_k}  \EE_{\PP_k} [\bm \xi^\top \bm Q_k \bm\xi].\label{eq:pdr-obj}
\end{align}
The second equality holds because each innermost maximization optimizes the conditional distribution $\PP_k$ separately. 
In the following, we will first derive the reformulation of the $k^\textup{th}$ inner worst-case expectation over the conditional ambiguity set $\mathcal P_k$ defined in $\eqref{eq:ambiguity_set-k}$. Then, we derive the reformulation of the outer worst-case expectation problem over the uncertainty set $\Delta$.
\begin{proposition}\label{prop2}
With the conditional ambiguity set $\mathcal P_k$ defined in $\eqref{eq:ambiguity_set-k}$, the inner worst-case expectation can be reformulated as
\begin{equation}
\label{eq:inner_ambiguity_set_ref}
\begin{array}{ccll}
\displaystyle \sup_{\PP_k \in \mathcal{P}_k}  \EE_{\PP_k} [\bm \xi^\top \bm Q_k \bm\xi] = & \displaystyle \inf & \alpha_k + \tr(\bm Q_k\hat{\bm\Omega}_k) + \tr(\bm B_k\hat{\bm\Omega}_k) + \epsilon_k\|\bm Q_k^\top + \bm B_k\|_F  \\
& \st & \alpha_k \in \RR, \ \bm B_k \in \mathbb{S}^{S+1}\\
& & \alpha_k + \bm \xi^\top \bm B_k \bm \xi \geq  0  \quad \forall \bm\xi \in \Xi_k,
\end{array}
\end{equation}
where $\hat{\bm \Omega}_k = \frac{1}{|\mathcal{I}_k|} \sum_{i \in \mathcal{I}_k} \hat{\bm \xi}_i \hat{\bm \xi}_i^\top$. 
\end{proposition}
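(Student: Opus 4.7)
The plan is to establish~(\ref{eq:inner_ambiguity_set_ref}) via Lagrangian duality applied to the generalized moment problem defining the worst-case expectation. Since $\bm\xi^\top \bm Q_k \bm\xi$ depends only on $\sym(\bm Q_k)$, I may assume without loss of generality that $\bm Q_k$ is symmetric. First, I would recast the primal by introducing an auxiliary symmetric matrix $\bm\Omega$ standing for the second-moment matrix $\EE_{\PP_k}[\bm\xi\bm\xi^\top]$, so that the objective becomes $\tr(\bm Q_k \bm\Omega)$ and the primal reads $\sup\tr(\bm Q_k\bm\Omega)$ subject to $\PP_k\in\mathcal{P}(\Xi_k)$, $\int_{\Xi_k}\bm\xi\bm\xi^\top\PP_k(d\bm\xi) = \bm\Omega$, and $\|\bm\Omega - \hat{\bm\Omega}_k\|_F \leq \epsilon_k$. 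Next, I would attach a scalar multiplier $\alpha_k \in \RR$ to $\int_{\Xi_k}\PP_k(d\bm\xi) = 1$ and a symmetric matrix multiplier $\bm B_k \in \mathbb{S}^{S+1}$ to the moment-coupling equation to form the Lagrangian
\begin{equation*}
\mathcal{L} = \alpha_k + \tr\bigl((\bm Q_k + \bm B_k)\bm\Omega\bigr) - \int_{\Xi_k}\bigl(\alpha_k + \bm\xi^\top \bm B_k \bm\xi\bigr)\PP_k(d\bm\xi).
\end{equation*}

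Computing the inner suprema then delivers the dual objective. The supremum of the last term over $\PP_k\geq 0$ on $\Xi_k$ equals $0$ (attained at $\PP_k=0$) if $\alpha_k + \bm\xi^\top \bm B_k \bm\xi \geq 0$ for every $\bm\xi \in \Xi_k$, and $+\infty$ otherwise, producing exactly the copositivity-type constraint in the statement. The supremum of $\tr((\bm Q_k + \bm B_k)\bm\Omega)$ over $\bm\Omega \in \mathbb{S}^{S+1}$ with $\|\bm\Omega - \hat{\bm\Omega}_k\|_F \leq \epsilon_k$ follows from the self-duality of the Frobenius norm: writing $\bm\Omega = \hat{\bm\Omega}_k + \bm\Delta$ with $\bm\Delta\in\mathbb{S}^{S+1}$ and $\|\bm\Delta\|_F \leq \epsilon_k$, the supremum equals $\tr((\bm Q_k + \bm B_k)\hat{\bm\Omega}_k) + \epsilon_k \|\bm Q_k + \bm B_k\|_F$. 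Exploiting the symmetry of $\bm Q_k$ and $\bm B_k$, I can rewrite this as $\tr(\bm Q_k\hat{\bm\Omega}_k) + \tr(\bm B_k\hat{\bm\Omega}_k) + \epsilon_k \|\bm Q_k^\top + \bm B_k\|_F$, matching the right-hand side of~(\ref{eq:inner_ambiguity_set_ref}).

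The remaining step is to verify strong duality. Under Assumption~\ref{assum1}, the cone $\mathcal{K}$ is compact, hence so is the slice $\Xi_k$; and the empirical conditional distribution $\hat{\PP}_k$ is a probability measure on $\Xi_k$ whose second-moment matrix is exactly $\hat{\bm\Omega}_k$, which places it strictly inside the Frobenius ball whenever $\epsilon_k > 0$. This Slater-type interior-point condition, together with the linearity of the objective and the conic structure of the nonnegativity constraint on $\PP_k$, closes the duality gap by standard strong-duality results for generalized moment problems. The main obstacle is precisely this strong-duality step: the primal lives in the infinite-dimensional cone of Borel measures on $\Xi_k$, and rigorously establishing zero duality gap requires either invoking a Shapiro-type conic duality theorem or applying a Rogosinski--Richter reduction to finitely supported measures; once that is in place, the closed-form inner suprema computation is routine.
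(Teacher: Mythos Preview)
Your proposal is correct and follows essentially the same route as the paper: rewrite the worst-case expectation as a moment problem in the auxiliary second-moment matrix $\bm\Omega$ and then dualize. Your explicit Lagrangian computation reproduces exactly what the paper states as ``the dual of problem~(\ref{eq:inner_ambiguity_set_ref0}) is given by problem~(\ref{eq:inner_ambiguity_set_ref}).''

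The one place where the paper's argument is noticeably simpler is the strong-duality step, which you correctly flag as the main obstacle. Rather than arguing Slater on the infinite-dimensional primal (where the empirical measure $\hat{\PP}_k$ is certainly not an interior point of the cone $\mathcal{M}_+$, so one must fall back on Shapiro-type theorems or a Rogosinski reduction as you note), the paper verifies Slater on the \emph{dual} side: the point $\alpha_k>0$, $\bm B_k=\bm 0$ lies strictly in the interior of the semi-infinite constraint $\alpha_k+\bm\xi^\top\bm B_k\bm\xi\geq 0$ for all $\bm\xi\in\Xi_k$. Since the dual minimization is a finite-dimensional convex program, this immediately closes the duality gap without any infinite-dimensional machinery. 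Adopting this observation would remove the only delicate step in your argument.
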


\begin{proof}
By the cyclic property of the trace operation, we have $\bm\xi^\top \bm Q_k\bm\xi=\tr(\bm\xi^\top \bm Q_k\bm\xi)=\tr(\bm Q_k\bm\xi\bm\xi^\top )$. Then, using the linearity of expectation, we can reexpress the expectation $\EE_{\PP_k} [\tr(\bm Q_k\bm\xi\bm\xi^\top)]$ as $\tr\left( \bm Q_k \EE_{\PP_k} [\bm\xi \bm \xi^\top]\right)$. 
Substituting the definition of the conditional ambiguity set $\mathcal P_k$, we can rewrite the resulting worst-case expectation as the moment problem
\begin{equation}
\label{eq:inner_ambiguity_set_ref0}
\begin{array}{ccccl}
 \displaystyle\sup_{\PP_k \in \mathcal{P}_k} \tr\left( \bm Q_k \EE_{\PP_k} [\bm\xi \bm \xi^\top]\right) &= &  & \sup & \tr\left( \bm Q_k \bm \Omega_k \right)\\
& & & \st & \mu \in \mathcal{M}_+, \ \bm \Omega_k \in \mathbb{S}^{S+1}\\
& & & & \int_{\Xi_k} \mu (\mathrm{d} \bm \xi) = 1 \\
& & & & \bm \Omega_k = \int_{\Xi_k}\bm \xi \bm \xi^\top \mu(\mathrm{d} \bm \xi)\\
& & & & \| \bm \Omega_k - \hat{\bm \Omega}_k \|_F \leq \epsilon_k.
\end{array}
\end{equation}
The dual of problem $\eqref{eq:inner_ambiguity_set_ref0}$ is given by problem $\eqref{eq:inner_ambiguity_set_ref}$. Since there always exists an interior  point (e.g., $\alpha_k > 0$, $\bm B_k = \mathbf{0}$) in problem $\eqref{eq:inner_ambiguity_set_ref}$, strong duality holds, which implies that the optimal objective values of problem~$\eqref{eq:inner_ambiguity_set_ref0}$ and $\eqref{eq:inner_ambiguity_set_ref}$ coincide. This completes the proof.  
\end{proof}

\begin{proposition} \textup{(\cite[Theorem 4.1]{delta-reformulation}).} \label{prop3}
With the uncertainty set $\Delta$ described in $\eqref{eq:ambiguity_set-prob}$, for any $\bm \varphi \in \RR^K$, the worst-case expectation problem $\max_{\bm p \in \Delta} \bm \varphi^\top \bm p$ is equivalent to the second-order cone program (SOCP)
\begin{equation}
\label{eq:outer_ambiiguityp_set_ref}
\begin{array}{ccll}
& \displaystyle \max &  \bm \varphi^\top \bm p \\
& \st & \bm p, \bm q \in \RR^{K}_{+}, \ \mathbf{e}^\top \bm p  = 1, \ \mathbf{e}^\top \bm q \leq \gamma \\
& & \sqrt{(p_k -  \hat{p}_k)^2 + \frac{1}{4}p_k^2 + q_k^2} \leq \frac{1}{2}p_k + q_k \quad \forall  k \in [K],
\end{array}
\end{equation}
i.e., the optimal value and optimal solution can be computed by solving problem $\eqref{eq:outer_ambiiguityp_set_ref}$. The optimal value can also be obtained by solving the following SOCP dual to $\eqref{eq:outer_ambiiguityp_set_ref}$:
\begin{equation*}
\begin{array}{ccll}
& \displaystyle \min &  \gamma \omega - \eta - 2\hat{\bm p}^\top \bm r + 2\omega\hat{\bm p}^\top \mathbf{e} \\
& \st & \omega \in \RR_{+}, \ \eta \in \RR, \ \bm r, \bm s \in \RR^{K}\\
& & \varphi_k \leq s_k, \ s_k + \eta \leq \omega, 
\ \sqrt{4r_k^2 + (s_k +  \eta)^2} \leq 2\omega - s_k -\eta \quad \forall  k \in [K].
\end{array}
\end{equation*}
\end{proposition}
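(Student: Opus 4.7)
The plan is to first cast the $\chi^2$-divergence constraint as a collection of rotated second-order cone constraints, thereby rewriting $\max_{\bm p\in \Delta}\bm\varphi^\top \bm p$ as a linear program over a second-order cone representable feasible set. I then obtain the dual SOCP via standard conic duality and verify Slater's condition to conclude strong duality and the equivalence of optimal values.

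For the epigraphical reformulation, for each $k\in[K]$ I would introduce an auxiliary variable $q_k\ge 0$ satisfying $q_k\ge (p_k-\hat p_k)^2/p_k$. Since $p_k\ge 0$, this hyperbolic inequality is equivalent to $p_k q_k\ge (p_k-\hat p_k)^2$, which is in turn equivalent to the standard second-order cone constraint $\sqrt{4(p_k-\hat p_k)^2+(p_k-q_k)^2}\le p_k+q_k$. Dividing by $2$ and rearranging yields the inequality in \eqref{eq:outer_ambiiguityp_set_ref}, namely $\sqrt{(p_k-\hat p_k)^2+\tfrac14 p_k^2+q_k^2}\le \tfrac12 p_k+q_k$; squaring both sides (valid because $\tfrac12 p_k+q_k\ge 0$ whenever $\bm p,\bm q\ge \bm 0$) recovers $(p_k-\hat p_k)^2\le p_k q_k$. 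Summing the epigraphical bounds and combining with $\bm p\ge \bm 0$, $\mathbf{e}^\top \bm p=1$, and $\mathbf{e}^\top \bm q\le \gamma$ reproduces $\Delta$ exactly, establishing that the primal problem equals the SOCP in \eqref{eq:outer_ambiiguityp_set_ref}.

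For the dual, I would attach $\omega\ge 0$ to $\mathbf{e}^\top \bm q\le \gamma$, $\eta\in\RR$ to $\mathbf{e}^\top \bm p=1$, and a cone variable for each per-index second-order cone constraint. Writing out the Lagrangian and carrying out the inner maximization over $\bm p,\bm q\ge \bm 0$ reduces the scalar slacks to a pair $\bm r,\bm s$, with the vector parts of the cones collapsing precisely into the term $\sqrt{4r_k^2+(s_k+\eta)^2}\le 2\omega-s_k-\eta$ stated in the proposition, while the componentwise coefficients of $p_k$ and $q_k$ in the Lagrangian yield the linear inequalities $\varphi_k\le s_k$ and $s_k+\eta\le \omega$. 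Finally, Slater's condition holds for the primal: taking $\bm p=\hat{\bm p}$ (so $(p_k-\hat p_k)^2=0$) together with any strictly positive $\bm q$ with $\mathbf{e}^\top\bm q<\gamma$ makes every SOC constraint strict, which guarantees strong duality between the two SOCPs and therefore the equality of their optimal values.

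The main obstacle I anticipate is the bookkeeping in the dualization step: correctly tracking signs, splitting the scalar part of each cone into the variables $s_k$ and $\eta$, and recognizing that the vector components of the dual cones assemble exactly into the $\sqrt{4r_k^2+(s_k+\eta)^2}$ norm in the stated form. Everything else reduces to routine conic duality and the well-known hyperbolic-to-SOC conversion $p_k q_k\ge(p_k-\hat p_k)^2$.
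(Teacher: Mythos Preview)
The paper does not actually prove Proposition~\ref{prop3}; it is quoted verbatim from \cite[Theorem~4.1]{delta-reformulation} and used as a black box. Your proposal therefore cannot be compared against a proof in the paper, but it is the natural argument one would expect the cited reference to contain: lift the $\chi^2$ constraint via epigraphical variables $q_k\ge (p_k-\hat p_k)^2/p_k$, rewrite each hyperbolic inequality $p_kq_k\ge (p_k-\hat p_k)^2$ in second-order cone form, and then dualize. Your verification that squaring the SOC inequality in \eqref{eq:outer_ambiiguityp_set_ref} recovers $(p_k-\hat p_k)^2\le p_kq_k$ is correct, and the high-level description of the dualization is on the right track, though you correctly flag that the bookkeeping (matching $r_k,s_k,\eta,\omega$ to the dual cone variables) is where the work lies.

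One small caveat on your Slater argument: taking $\bm p=\hat{\bm p}$ makes the $k$th SOC constraint strict only when $\hat p_k q_k>0$, so if some empirical frequency $\hat p_k$ vanishes you are on the boundary both of that SOC constraint and of the nonnegativity constraint $p_k\ge 0$. This is easily repaired by perturbing those coordinates to a small $p_k>0$ and choosing $q_k>p_k$ (still keeping $\mathbf e^\top\bm q<\gamma$), but it is worth stating explicitly since the paper's setting allows empty partitions.
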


Combining Propositions \ref{prop2} and  \ref{prop3}, and identifying $\varphi_k$ with the optimal value of the $k^\textup{th}$ inner worst-case expectation $\sup_{\PP_k \in \mathcal{P}_k}  \EE_{\PP_k} [\bm \xi^\top \bm Q_k \bm\xi]$, we obtain the reformulation of the objective function $\eqref{eq:pdr-obj}$. The PDR problem~$\eqref{eq:pdr}$ is thus equivalent to the following semi-infinite program:
\begin{equation}
\label{eq:pdr-ref-1}
\begin{array}{ccll}
J^{\textup{PDR}} = &  \displaystyle \inf & \bm c^\top \bm x + \theta + \frac{1}{\delta} ( \gamma \omega - \eta - 2\hat{\bm p}^\top \bm r + 2\omega\hat{\bm p}^\top \mathbf{e}) \\
& \st & \bm x \in \X, \  \theta, \eta \in \RR,\ \omega \in \RR_{+},\ \bm r, \bm s, \bm \alpha \in \RR^{K}, \\ 
& & \left.
\begin{aligned}
& \bm Y_k \in \RR^{N_2 \times (S+1)},\ \bm Q_k \in \RR^{(S+1) \times (S+1)}, \ \bm B_k \in \mathbb{S}^{S+1} \\
& s_k + \eta \leq \omega, 
\sqrt{4r_k^2 + (s_k +  \eta)^2} \leq 2\omega - s_k -\eta\\
& \alpha_k + \tr(\bm Q_k\hat{\bm\Omega}_k) + \tr(\bm B_k\hat{\bm\Omega}_k) + \epsilon_k\|\bm Q_k^\top +  \bm B_k\|_F \leq s_k  \\
& \bm{\mathcal{T}}_\ell (\bm x)^\top\bm \xi  \leq (\bm{\mathcal{W}}_\ell \bm\xi)^\top\bm Y_k\bm\xi + \lambda_\ell \bm \xi^\top \bm Q_k \bm \xi + \kappa_\ell \theta  \quad \forall \ell \in [L+2] \ \forall \bm\xi \in \Xi_k \\
& \alpha_k + \bm \xi^\top \bm B_k \bm \xi \geq  0  \quad \forall \bm\xi \in \Xi_k  \\
\end{aligned} \right \} \forall k \in [K].
\end{array}
\end{equation}


\subsection{Copositive reformulations}
\label{copositive_reformulations}


Problem \eqref{eq:pdr-ref-1} has infinitely many constraints parametrized by the realizations of $\bm \xi$ in the partitions $\Xi_k, \ k \in [K]$. For any fixed decision $\left( \bm x, \theta, \bm \alpha, \bm Y_k, \bm Q_k , \bm B_k\right )$, the constraints are equivalent to the following constraints involving nonconvex quadratic minimization problems: 
\begin{subequations}
\label{eq:pdr_cons}
\begin{align}
    \displaystyle 0\leq\inf_{\bm \xi \in  \Xi_{k}} & \; \bm \xi^\top \bm{\mathcal{W}}^\top_\ell \bm Y_k \bm\xi + \lambda_\ell \bm \xi^\top \bm Q_k \bm \xi - \bm{\mathcal{T}}_\ell (\bm x)^\top \bm\xi + \kappa_\ell \theta \quad \forall \ell \in [L+2] \ \forall k \in [K]  \label{eq:pdr_cons1} \\
    \displaystyle 0 \leq \inf_{\bm \xi \in  \Xi_{k}} &\; \bm \xi^\top \bm B_k \bm \xi + \alpha_k \quad \forall k \in [K]. \label{eq:pdr_cons2}
\end{align}
\end{subequations}
To convexify the problems, we utilize the copositive programming scheme to derive equivalent reformulations of \eqref{eq:pdr_cons}. We first introduce the following lemmas to establish the equivalence between a nonconvex quadratic optimization problem and its corresponding copositive formulation.

\begin{lemma} \textup{(\cite[Lemma 4]{minimum-ellipsoids}).}
\label{lem:interior-point}
With the $k^\textup{th}$ partition 
defined as 
\begin{equation*}
    \Xi_k \coloneqq \left\{\bm \xi \coloneqq \begin{bmatrix}\bm \zeta\\ \nu \end{bmatrix} \in \mathcal{K}_k:
    \nu = 1
     \right\},
\end{equation*}
there exist $\bm \Lambda \in \mathbb{S}^S$ and $\bm \zeta \in \mathbb{R}^S$ such that 
\begin{equation*}
    \begin{bmatrix}
\bm \Lambda & \bm \zeta\\ 
 \bm \zeta^\top & 1 
\end{bmatrix} \succ_{\mathcal{C^*}(\mathcal{K}_k)} \bm 0.
\end{equation*}
\end{lemma}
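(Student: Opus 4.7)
The plan is to exhibit a matrix of the required block form that lies in the interior of the generalized completely positive cone $\mathcal{C}^*(\mathcal{K}_k)$. I will rely on the standard characterization of such interior points: a matrix $\bm M = \sum_{i} \bm x^i (\bm x^i)^\top$ lies in $\interior(\mathcal{C}^*(\mathcal{K}_k))$ whenever each $\bm x^i$ belongs to $\interior(\mathcal{K}_k)$ and the family $\{\bm x^i\}$ spans $\RR^{S+1}$. The construction will use points drawn from the slice $\Xi_k = \mathcal{K}_k \cap \{\nu=1\}$ so that the required normalization $M_{S+1,S+1}=1$ is automatic.

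First I would argue that $\mathcal{K}_k$ is full-dimensional in $\RR^{S+1}$ and, hence, that $\Xi_k$ has nonempty relative interior in the hyperplane $\{\nu=1\}$. The ambient cone $\mathcal{K}$ is full-dimensional by Assumption \ref{assum1}, and the Voronoi construction only intersects $\mathcal{K}$ with finitely many non-strict half-spaces, so $\interior(\mathcal{K}_k)$ is nonempty as long as the Voronoi cell of the constructor $\bm \xi_k'$ is genuinely $(S+1)$-dimensional. From the relative interior of $\Xi_k$ I would pick $S+1$ affinely independent points $\bm \xi^{(1)},\ldots,\bm \xi^{(S+1)}$. Because each has last coordinate one, a short argument shows they are linearly independent in $\RR^{S+1}$: if $\sum_i c_i \bm \xi^{(i)} = \bm 0$, reading off the last entry gives $\sum_i c_i = 0$, and combined with affine independence this forces every $c_i=0$.

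Next, I would set
\[
\bm M \;\coloneqq\; \frac{1}{S+1}\sum_{i=1}^{S+1} \bm \xi^{(i)}(\bm \xi^{(i)})^\top.
\]
Rescaling each $\bm \xi^{(i)}$ by $1/\sqrt{S+1}$ keeps it in $\mathcal{K}_k$, so $\bm M \in \mathcal{C}^*(\mathcal{K}_k)$. Its $(S+1,S+1)$ entry equals $\tfrac{1}{S+1}\sum_i 1 = 1$, yielding the required block form with $\bm \Lambda = \tfrac{1}{S+1}\sum_i \bm \zeta^{(i)}(\bm \zeta^{(i)})^\top$ and $\bm \zeta = \tfrac{1}{S+1}\sum_i \bm \zeta^{(i)}$.

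To finish, I would take an arbitrary $\bm X \in \mathcal{C}(\mathcal{K}_k)$ with $\langle \bm M,\bm X\rangle = 0$ and show $\bm X = \bm 0$. The inner product expands to $\tfrac{1}{S+1}\sum_i (\bm \xi^{(i)})^\top \bm X \bm \xi^{(i)}$, a sum of nonnegative terms by copositivity, so each vanishes. Since $\bm \xi^{(i)} \in \interior(\mathcal{K}_k)$, it is an unconstrained local minimizer of $\bm v \mapsto \bm v^\top \bm X \bm v$ on the open set $\interior(\mathcal{K}_k)$, whence the first-order condition gives $\bm X \bm \xi^{(i)} = \bm 0$; the spanning property then forces $\bm X = \bm 0$, exactly the strict positivity against the dual cone that characterizes membership in $\interior(\mathcal{C}^*(\mathcal{K}_k))$. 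The main obstacle I expect is step one, namely certifying that $\Xi_k$ has nonempty relative interior so that the Voronoi partition does not degenerate into a lower-dimensional set; this is geometrically transparent when the constructors $\{\bm \xi_k'\}$ are in general position, but a clean statement requires ruling out configurations where $\bm \xi_k'$ coincides with another constructor or lies outside the effective support induced by $\mathcal{K}$.
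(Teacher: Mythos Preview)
The paper does not supply its own proof of this lemma; it is quoted verbatim from \cite[Lemma~4]{minimum-ellipsoids} and is used only as a black box to obtain the Slater point for Lemma~\ref{lem3}. So there is no ``paper's proof'' to compare against, and your task reduces to whether your argument stands on its own.

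It does. Your construction is the standard one: take $S+1$ affinely independent points $\bm\xi^{(1)},\ldots,\bm\xi^{(S+1)}$ from the relative interior of the slice $\Xi_k$, form the normalized Gram combination $\bm M=\tfrac{1}{S+1}\sum_i\bm\xi^{(i)}(\bm\xi^{(i)})^\top$, and observe that the last coordinate being $1$ forces the correct block structure with bottom-right entry~$1$. The interiority argument via the dual pairing is also sound: copositivity of $\bm X$ together with $\langle\bm M,\bm X\rangle=0$ forces each quadratic term to vanish, the first-order condition at an interior minimizer gives $\bm X\bm\xi^{(i)}=\bm 0$, and spanning yields $\bm X=\bm 0$. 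This matches the known characterization of $\interior(\mathcal{C}^*(\mathcal K_k))$ as those completely positive matrices admitting a decomposition with generators in $\interior(\mathcal K_k)$ that span the ambient space; you have essentially reproved that sufficiency direction.

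Your flagged obstacle is the only genuine gap, and it is a gap in the paper's standing assumptions rather than in your reasoning. The argument needs $\Xi_k$ to be full-dimensional in the hyperplane $\{\nu=1\}$, equivalently $\mathcal K_k$ full-dimensional in $\RR^{S+1}$. Assumption~\ref{assum1} gives this for $\mathcal K$, but the Voronoi cell $\Xi_k$ inherits it only if the constructor $\bm\xi_k'$ is distinct from the other constructors and its Voronoi region meets $\interior(\Xi)$. The paper does not state this explicitly (it only says the constructors are ``obtained independently of the samples''), but every downstream use of the lemma---strong duality in Lemma~\ref{lem3}, the reformulations in Proposition~\ref{prop:constraint_reformulation}---tacitly requires it. You are right to isolate this as an implicit regularity condition on the partitioning scheme rather than a defect in the proof itself.
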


\begin{lemma} \textup{(\cite[Corollary 8.3]{copositive-program}).} \label{lem2}
The nonconvex quadratic program given by
\begin{equation}
\label{eq:qua-progran}
\begin{array}{clll}
 & \displaystyle \sup_{\bm \xi \in \RR^{S+1}} & \bm \xi^\top \hat{\bm C}_0 \bm\xi \\
 &\st & \mathbf{e}_{S+1}^\top \bm \xi = 1\\
 & & \bm\xi \in  \mathcal K_k, \\
 \end{array}
\end{equation}
and the completely positive program (CPP)
\begin{equation}
\label{eq:cpp}
\begin{array}{clll}
 & \displaystyle \sup & \left \langle \hat{\bm C}_0, 
 \begin{bmatrix}
\bm \Lambda & \bm \zeta\\ 
 \bm \zeta^\top & 1 
\end{bmatrix} \right \rangle \\
 &\st & 
\begin{bmatrix}
\bm \Lambda & \bm \zeta\\ 
 \bm \zeta^\top & 1 
\end{bmatrix} \in \mathbb{S}^{S+1}, 
\begin{bmatrix}
\bm \Lambda & \bm \zeta\\ 
 \bm \zeta^\top & 1 
\end{bmatrix} \in \mathcal{C^*}(\mathcal{K}_k)\\
 \end{array}
\end{equation}
are equivalent, i.e., (i) problem \eqref{eq:qua-progran} and problem \eqref{eq:cpp} share the same optimal value; (ii) if 
$$\begin{bmatrix}
\bm \Lambda^{\star} & \bm \zeta^{\star}\\ 
(\bm \zeta^{\star})^\top & 1 
\end{bmatrix}$$
is an  optimal solution for \eqref{eq:cpp}, then $[\bm \zeta^{\star}, 1]^\top$ is in the convex hull of optimal solutions for \eqref{eq:qua-progran}.
\end{lemma}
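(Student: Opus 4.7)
My plan is to invoke the copositive programming reformulation theory developed in \cite{copositive-program}, which establishes that a nonconvex quadratic program over a convex cone intersected with a normalizing affine equality admits an exact lift to a completely positive program, provided suitable compactness and strict-feasibility conditions hold. Compactness of the slice $\Xi_k$ (inherited from Assumption \ref{assum1}) and the strict feasibility guaranteed by Lemma \ref{lem:interior-point} together supply these hypotheses for our cone $\mathcal{K}_k$ and affine slice $\mathbf{e}_{S+1}^\top \bm\xi = 1$. The equivalence would then be established by proving inequality in both directions.

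For the easy direction CPP $\geq$ QP, any QP-feasible $\bm\xi = [\bm\zeta, 1]^\top$ gives a rank-one lift $\bm\xi\bm\xi^\top \in \mathcal{C}^*(\mathcal{K}_k)$ that is CPP-feasible and attains the matching objective $\bm\xi^\top \hat{\bm C}_0 \bm\xi = \langle \hat{\bm C}_0, \bm\xi\bm\xi^\top\rangle$. For the harder direction CPP $\leq$ QP, I would use the definition of $\mathcal{C}^*(\mathcal{K}_k)$ to decompose any CPP-feasible matrix $M$ as $M = \sum_{i=1}^I \bm x_i \bm x_i^\top$ with $\bm x_i \in \mathcal{K}_k$. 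Letting $\mu_i \geq 0$ denote the last entry of $\bm x_i$, the $(S+1,S+1)$-entry condition on $M$ forces $\sum_i \mu_i^2 = 1$. For indices with $\mu_i > 0$, I would normalize $\bm\eta_i \coloneqq \bm x_i/\mu_i$ to produce QP-feasible points and set $\lambda_i \coloneqq \mu_i^2$ so that $\sum_i \lambda_i = 1$. The objective $\langle \hat{\bm C}_0, M\rangle$ then expresses as the convex combination $\sum_i \lambda_i \bm\eta_i^\top \hat{\bm C}_0 \bm\eta_i$ of QP objective values, which is at most the QP optimum.

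The main obstacle I anticipate is handling decomposition terms with $\mu_i = 0$, which do not contribute to $\sum_i \mu_i^2$ yet could still contribute to $\langle \hat{\bm C}_0, M\rangle$. I would resolve this by exploiting compactness: if some nonzero $\bm x \in \mathcal{K}_k$ had last entry zero, then for any $\bm\xi \in \Xi_k$ the ray $\bm\xi + t \bm x$ would lie in $\mathcal{K}_k$ by conic closure and would have last entry one, hence would belong to $\Xi_k$ for every $t \geq 0$, contradicting compactness of $\Xi_k$. Therefore every $\bm x_i$ in a valid decomposition must have $\mu_i > 0$, and the convex-combination identity above closes the reverse inequality.

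Finally, claim (ii) follows directly from the same identity. If $M^\star$ is CPP-optimal with decomposition $M^\star = \sum_i \lambda_i \bm\eta_i \bm\eta_i^\top$, then every $\bm\eta_i$ must itself be QP-optimal—otherwise the convex combination could be strictly improved—so that $[\bm\zeta^\star, 1]^\top = \sum_i \lambda_i \bm\eta_i$ lies in the convex hull of QP optimizers, as claimed.
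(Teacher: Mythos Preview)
Your argument is correct and is essentially the standard Burer-type proof that underlies the cited result. Note, however, that the paper does not actually prove this lemma: it is stated with attribution to \cite[Corollary~8.3]{copositive-program} and used as a black box. So there is no ``paper's proof'' to compare against---you have written out the mechanism that the paper merely imports.

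A couple of minor remarks. First, your invocation of Lemma~\ref{lem:interior-point} in the opening paragraph is superfluous for this statement: the interior-point condition is what drives strong duality in Lemma~\ref{lem3}, not the QP--CPP equivalence itself, which hinges only on compactness of the slice $\Xi_k$. Second, in the argument for claim~(ii), the phrase ``otherwise the convex combination could be strictly improved'' is slightly off; the cleaner statement is that since $\langle \hat{\bm C}_0, M^\star\rangle = \sum_i \lambda_i \,\bm\eta_i^\top \hat{\bm C}_0 \bm\eta_i$ already equals the common optimal value and each summand is at most that value, every $\bm\eta_i$ with $\lambda_i>0$ must attain it. With that wording fix, your derivation of $[\bm\zeta^\star,1]^\top = \sum_i \lambda_i \bm\eta_i$ as a convex combination of QP optimizers is exactly right.
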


Based on the duality of conic programming, the dual of the CPP \eqref{eq:cpp} is given by the following COP with respect to $\mathcal{K}_k$:
\begin{equation}
\label{eq:cop}
\begin{array}{cll}
 & \displaystyle \inf_{u \in \RR} & u\\
 &\st & u \mathbf{e}_{S+1} \mathbf{e}_{S+1}^\top 
 - \bm{\hat{C}}_0 \in \mathcal{C} (\mathcal K_k). \\
\end{array}
\end{equation}

\begin{lemma} \label{lem3}
Strong duality holds between problems \eqref{eq:cpp} and \eqref{eq:cop} under Assumption 1.
\end{lemma}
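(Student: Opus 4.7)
The plan is to exhibit \eqref{eq:cop} as the conic Lagrangian dual of the CPP \eqref{eq:cpp} and then invoke a Slater-type strong duality theorem for conic linear programming. First, I would observe that \eqref{eq:cpp} is a linear program over the closed convex cone $\mathcal{C}^*(\mathcal{K}_k)$ with a single equality constraint fixing the $(S+1,S+1)$ entry to $1$. Pairing via the Frobenius inner product and forming the Lagrangian yields exactly \eqref{eq:cop}: the scalar $u$ plays the role of the multiplier for the equality constraint, and the conic constraint $u\,\mathbf{e}_{S+1}\mathbf{e}_{S+1}^\top - \hat{\bm C}_0 \in \mathcal{C}(\mathcal{K}_k)$ is precisely the condition ensuring that the Lagrangian stays bounded above on $\mathcal{C}^*(\mathcal{K}_k)$, since by definition $\mathcal{C}(\mathcal{K}_k) = (\mathcal{C}^*(\mathcal{K}_k))^*$ under the Frobenius pairing. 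Weak duality is then automatic from the dual-cone inequality.

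Next I would verify the two standard hypotheses for strong duality in conic programming, namely primal Slater feasibility and finiteness of the primal optimum. Lemma~\ref{lem:interior-point} directly delivers a matrix of the required $2{\times}2$ block form lying strictly in the interior of $\mathcal{C}^*(\mathcal{K}_k)$, which constitutes a Slater point for the CPP \eqref{eq:cpp}. For boundedness, Assumption~\ref{assum1} ensures that $\mathcal{K}$ is compact; since $\mathcal{K}_k$ is its intersection with finitely many halfspaces it remains compact, and hence $\Xi_k$ is compact. Therefore the nonconvex quadratic program \eqref{eq:qua-progran} has a finite supremum, which by Lemma~\ref{lem2} transfers to a finite optimal value for the CPP \eqref{eq:cpp}.

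With a strictly feasible primal and a finite primal optimum in hand, the standard conic strong duality theorem yields coincidence of the optimal values of \eqref{eq:cpp} and \eqref{eq:cop} together with attainment of the infimum in \eqref{eq:cop}, which is exactly the assertion of Lemma~\ref{lem3}.

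The main obstacle I anticipate is the reconciliation of the algebraic strict-copositivity notation $\succ_{\mathcal{C}^*(\mathcal{K}_k)}$ provided by Lemma~\ref{lem:interior-point} with the topological interior of $\mathcal{C}^*(\mathcal{K}_k)$ inside $\mathbb{S}^{S+1}$ that is required by the strong duality theorem. A short verification, using full-dimensionality of $\mathcal{K}_k$ from Assumption~\ref{assum1}, that these two notions of interiority coincide for the generalized completely positive cone closes this gap.
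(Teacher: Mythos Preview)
Your proposal is correct and follows essentially the same approach as the paper: the paper's proof is a single sentence invoking Lemma~\ref{lem:interior-point} to supply a Slater point for the primal CPP~\eqref{eq:cpp}, from which strong duality follows. You simply spell out the surrounding details (dual derivation, finiteness of the primal value, the interior/strict-positivity identification) that the paper leaves implicit.
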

\begin{proof}
The proof follows from Lemma \ref{lem:interior-point}, which establishes a Slater point for the primal CPP \eqref{eq:cpp}. 
\end{proof}
Using the above lemmas, we  now derive the copositive reformulations of the constraints in \eqref{eq:pdr_cons}. In the following, for notational convenience, we define the matrices $\bm \Delta^k_\ell(\bm x,\bm Y_k,\bm Q_k)$, $\ell \in [L+2]$, $k\in[K]$, that are affine in their arguments, as follows:
\begin{equation}
\label{eq:pldr_omega}
\begin{array}{clll}
\bm \Delta^k_\ell(\bm x,\bm Y_k,\bm Q_k) \coloneqq \frac{1}{2}\left ( \bm{\mathcal{W}}^\top_\ell \bm Y_k +\bm Y_k^\top \bm{\mathcal{W}}_\ell + \lambda_\ell \bm Q_k + \lambda_\ell \bm Q_k^\top - \bm{\mathcal{T}}_\ell(\bm x)\mathbf{e}_{S+1}^\top - \mathbf{e}_{S+1} \bm{\mathcal{T}}_\ell^\top(\bm x) \right ).
\end{array}
\end{equation}

\begin{proposition}
\label{prop:constraint_reformulation}
The constraints in \eqref{eq:pdr_cons1} are satisfied if and only if there exist $\pi^k_\ell \in \RR$, $\ell \in [L+2]$, $\forall k \in [K]$, such that
\begin{equation}
\label{eq:pdr_cons1_1}
\begin{array}{clll}
\bm \Delta^k_\ell(\bm x,\bm Y_k,\bm Q_k) - \pi^k_\ell \mathbf{e}_{S+1} \mathbf{e}_{S+1}^\top 
\in \mathcal{C} (\mathcal K_k), \; \pi^k_\ell + \kappa_\ell \theta \geq 0  \quad \forall \ell \in [L+2] \ \forall k \in [K].
\end{array}
\end{equation}
Similarly, the constraints in $\eqref{eq:pdr_cons2}$ are satisfied if and only if there exists $\bm \rho \in \RR^{K}$ such that 
\begin{equation}
\label{eq:pdr_cons2_1}
    \bm B_k - \rho_k \mathbf{e}_{S+1} \mathbf{e}_{S+1}^\top \in \mathcal{C} (\mathcal K_k), \ \rho_k + \alpha_k \geq 0 \quad \forall k \in [K].
\end{equation}
\end{proposition}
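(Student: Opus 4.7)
The plan is to reduce each semi-infinite quadratic constraint in \eqref{eq:pdr_cons} to a finite copositive cone membership by homogenizing via $\mathbf{e}_{S+1}^\top \bm\xi = 1$ and then invoking Lemmas \ref{lem2} and \ref{lem3}. I will handle \eqref{eq:pdr_cons1} in detail; the argument for \eqref{eq:pdr_cons2} is an immediate specialization.

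First I would rewrite the inner expression in \eqref{eq:pdr_cons1} as a pure quadratic form in $\bm\xi$. Using $\mathbf{e}_{S+1}^\top \bm\xi = 1$ on $\Xi_k$, the linear term becomes $\bm{\mathcal{T}}_\ell(\bm x)^\top \bm\xi = \bm\xi^\top \bm{\mathcal{T}}_\ell(\bm x) \mathbf{e}_{S+1}^\top \bm\xi$, and the two quadratic terms $\bm\xi^\top \bm{\mathcal{W}}_\ell^\top \bm Y_k \bm\xi$ and $\lambda_\ell \bm\xi^\top \bm Q_k \bm\xi$ are unchanged under the symmetrization $\bm M \mapsto \sym(\bm M)$. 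Collecting all three contributions yields exactly $\bm\xi^\top \bm\Delta^k_\ell(\bm x,\bm Y_k,\bm Q_k)\bm\xi + \kappa_\ell \theta$, with $\bm\Delta^k_\ell$ as in \eqref{eq:pldr_omega}. Hence \eqref{eq:pdr_cons1} is equivalent to requiring $\inf_{\bm\xi\in\Xi_k} \bm\xi^\top \bm\Delta^k_\ell \bm\xi \geq -\kappa_\ell \theta$ for every $\ell$ and $k$.

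Next I would convert this infimum into a copositive cone constraint. The \emph{if} direction is direct: whenever $\bm\Delta^k_\ell - \pi^k_\ell \mathbf{e}_{S+1}\mathbf{e}_{S+1}^\top \in \mathcal{C}(\mathcal{K}_k)$ and $\pi^k_\ell + \kappa_\ell \theta \geq 0$, any $\bm\xi \in \Xi_k \subseteq \mathcal{K}_k$ satisfies $\bm\xi^\top \bm\Delta^k_\ell \bm\xi \geq \pi^k_\ell (\mathbf{e}_{S+1}^\top \bm\xi)^2 = \pi^k_\ell \geq -\kappa_\ell \theta$. For the \emph{only if} direction, I would write the infimum as $-\sup_{\bm\xi\in\Xi_k} \bm\xi^\top (-\bm\Delta^k_\ell)\bm\xi$ and apply Lemma \ref{lem2} with $\hat{\bm C}_0 = -\bm\Delta^k_\ell$, which equates that supremum to the optimal value of the CPP \eqref{eq:cpp}. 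Lemma \ref{lem3} then provides strong duality with the COP \eqref{eq:cop}, so
\begin{equation*}
\inf_{\bm\xi\in\Xi_k} \bm\xi^\top \bm\Delta^k_\ell \bm\xi \;=\; \sup\bigl\{\pi^k_\ell \in \RR : \bm\Delta^k_\ell - \pi^k_\ell \mathbf{e}_{S+1}\mathbf{e}_{S+1}^\top \in \mathcal{C}(\mathcal{K}_k)\bigr\}.
\end{equation*}
Because $\mathbf{e}_{S+1}\mathbf{e}_{S+1}^\top \in \mathcal{C}(\mathcal{K}_k)$, the feasible set on the right is of the form $(-\infty, \pi^\star_{k,\ell}]$ and the supremum is attained; selecting any maximizer produces a $\pi^k_\ell$ satisfying both $\pi^k_\ell + \kappa_\ell \theta \geq 0$ and the required cone membership, which is precisely \eqref{eq:pdr_cons1_1}.

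Finally, the constraints \eqref{eq:pdr_cons2} are already in the homogeneous quadratic form $\bm\xi^\top \bm B_k \bm\xi + \alpha_k \geq 0$ with $\bm B_k \in \mathbb{S}^{S+1}$, so the identical argument applies with $\bm\Delta^k_\ell$ replaced by $\bm B_k$ and $\kappa_\ell \theta$ replaced by $\alpha_k$, yielding \eqref{eq:pdr_cons2_1}. The main obstacle is the \emph{only if} direction, which hinges on strong duality between the CPP \eqref{eq:cpp} and the COP \eqref{eq:cop} on each partition cone $\mathcal{K}_k$; this is guaranteed by Lemma \ref{lem3}, whose validity rests on the Slater point exhibited in Lemma \ref{lem:interior-point} and ultimately on Assumption \ref{assum1} applied to every $\mathcal{K}_k$ defined in \eqref{eq:Voronoi_partitions_cones}.
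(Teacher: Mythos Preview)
Your proposal is correct and follows essentially the same route as the paper: homogenize the quadratic via $\mathbf{e}_{S+1}^\top\bm\xi=1$ to obtain $\bm\Delta^k_\ell$, invoke Lemma~\ref{lem2} to pass from the nonconvex QP to the CPP, and then Lemma~\ref{lem3} for strong duality with the COP, yielding the copositive certificate $\pi^k_\ell$. Your explicit treatment of the \emph{if} direction and your attainment argument (the feasible $\pi$-set is downward-closed and closed) are slightly more detailed than the paper, which simply asserts existence of $\pi^k_\ell$ after strong duality; note that attainment is already implied by Lemma~\ref{lem3}, since the Slater point for the CPP guarantees the dual COP is solvable.
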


\begin{proof}
The constraint corresponding to indices $\ell$ and $k$ in \eqref{eq:pdr_cons1} can be rewritten as 
\begin{equation}
\label{eq:pdr_cons1_2}
\begin{array}{ccl}
 - \kappa_\ell \theta \leq & \displaystyle \inf_{\bm \xi \in \RR^{S+1}} & \bm \xi^\top \bm{\mathcal{W}}^\top_\ell \bm Y_k \bm\xi + \lambda_\ell \bm \xi^\top \bm Q_k \bm \xi - \bm{\mathcal{T}}_\ell (\bm x)^\top \bm\xi \\
 &\st & \mathbf{e}_{S+1}^\top \bm \xi = 1\\
 & & \bm\xi \in  \mathcal K_k. \\
\end{array} 
\end{equation}
According to Lemma \ref{lem2}, the quadratic minimization problem on the right-hand side of problem $\eqref{eq:pdr_cons1_2}$ is equivalent to the following linear program over the cone of completely positive matrices
\begin{equation}
\label{eq:pdr_cons1_3}
\begin{array}{clll}
 & \displaystyle \inf & \left \langle \bm \Delta^k_\ell(\bm x,\bm Y_k,\bm Q_k), \begin{bmatrix}
\bm \Lambda^k_\ell & \bm \zeta^k_\ell\\ 
 {\bm \zeta^k_\ell}^\top & 1 
\end{bmatrix} \right \rangle \\
 &\st & 
\begin{bmatrix}
\bm \Lambda^k_\ell & \bm \zeta^k_\ell\\ 
 {\bm \zeta^k_\ell}^\top & 1 
\end{bmatrix} \in \mathbb{S}^{S+1}, \; \begin{bmatrix}
\bm \Lambda^k_\ell & \bm \zeta^k_\ell\\ 
 {\bm \zeta^k_\ell}^\top & 1 
\end{bmatrix} \in \mathcal{C^*} (\mathcal K_k).
\end{array}
\end{equation}
The dual  of the above CPP \eqref{eq:pdr_cons1_3} is given by
\begin{equation}
\label{eq:pdr_cons1_4}
\begin{array}{clll}
 & \displaystyle \sup_{\pi^k_\ell  \in \RR} & \pi^k_\ell\\
 &\st &  \bm \Delta^k_\ell(\bm x,\bm Y_k,\bm Q_k) - \pi^k_\ell\bm e_{S+1} \bm e_{S+1}^\top 
 \in \mathcal{C} (\mathcal K_k). \\
\end{array}
\end{equation}
In view of the strong duality result between problems \eqref{eq:pdr_cons1_3} and \eqref{eq:pdr_cons1_4} in Lemma \ref{lem3}, the optimal value of the above problem coincides with the optimal value of the right-hand-side problem in \eqref{eq:pdr_cons1_2}. Under Assumption~1, the constraint in  \eqref{eq:pdr_cons1_2} is satisfied if and only if there exists a scalar $\pi_\ell^k$ such that $\pi^k_\ell + \kappa_\ell \theta \geq 0$ and
\begin{equation*}
\begin{array}{clll}
\bm \Delta^k_\ell(\bm x,\bm Y_k,\bm Q_k) - \pi^k_\ell \mathbf{e}_{S+1} \mathbf{e}_{S+1}^\top 
\in \mathcal{C} (\mathcal K_k).
\end{array}
\end{equation*}
Performing the same reformulations for all $(L+2)\times K$ constraints in \eqref{eq:pdr_cons1} leads to the constraint system in~\eqref{eq:pdr_cons1_1}. The reformulations for \eqref{eq:pdr_cons2} proceed in a similar way. Thus, the claim follows. 
\end{proof}
Proposition \ref{prop:constraint_reformulation} enables us to reformulate the semi-infinite program $\eqref{eq:pdr-ref-1}$ as a COP as follows.

\begin{theorem}
The PDR problem \eqref{eq:pdr} is equivalent to the following polynomial-size copositive program 
\begin{equation}
\label{eq:pdr_ref}
\begin{array}{ccll}
J^{\textup{PDR}} = &  \displaystyle \inf & \bm c^\top \bm x + \theta + \frac{1}{\delta} ( \gamma \omega - \eta - 2\hat{\bm p}^\top \bm r + 2\omega\hat{\bm p}^\top \mathbf{e}) \\
& \st & \bm x \in \X, \  \theta, \eta \in \RR,\ \omega \in \RR_{+},\ \bm r, \bm s, \bm \alpha \in \RR^{K} \\
& & \bm \pi_k \in \RR^L, \ \bm Y_k \in \RR^{N_2 \times (S+1)},\ \bm Q_k \in \RR^{(S+1) \times (S+1)}, \ \bm B_k \in \mathbb{S}^{S+1} \quad \forall k \in [K] \\
& & \left.
\begin{aligned}
& s_k + \eta \leq \omega, \ \sqrt{4r_k^2 + (s_k +  \eta)^2} \leq 2\omega - s_k -\eta \\
& \alpha_k + \tr(\bm Q_k\hat{\bm\Omega}_k) + \tr(\bm B_k\hat{\bm\Omega}_k) + \epsilon_k\|\bm Q_k^\top +  \bm B_k\|_F \leq s_k \\
& \pi^k_\ell + \kappa_\ell \theta \geq 0 \quad \forall \ell \in [L+2] \\
& \bm \Delta^k_\ell(\bm x,\bm Y_k,\bm Q_k) - \pi^k_\ell \mathbf{e}_{S+1} \mathbf{e}_{S+1}^\top 
\in \mathcal{C} (\mathcal K_k) \quad \forall \ell \in [L+2]  \\
& \bm B_k + \alpha_k \mathbf{e}_{S+1} \mathbf{e}_{S+1}^\top \in \mathcal{C} (\mathcal K_k)
\end{aligned} \right \} \forall k \in [K],
\end{array}
\end{equation}
where the affine functions $\bm \Delta^k_\ell(\bm x,\bm Y_k,\bm Q_k), \                                     \ell \in [L+2], \ k \in [K]$, are defined in \eqref{eq:pldr_omega}.
\end{theorem}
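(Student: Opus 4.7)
The plan is to chain together the reformulations that have already been derived in the excerpt. Problem \eqref{eq:pdr-ref-1} has been established as an equivalent rewriting of the PDR problem \eqref{eq:pdr} using the objective-side dualization of Propositions \ref{prop2} and \ref{prop3}, applied respectively to the conditional worst-case expectations over $\mathcal{P}_k$ and to the outer worst-case expectation over $\Delta$, together with the identification $\varphi_k = \sup_{\PP_k \in \mathcal P_k}\EE_{\PP_k}[\bm\xi^\top \bm Q_k \bm\xi]$ enforced by the constraints $\alpha_k + \tr(\bm Q_k\hat{\bm\Omega}_k) + \tr(\bm B_k\hat{\bm\Omega}_k) + \epsilon_k\|\bm Q_k^\top + \bm B_k\|_F \leq s_k$. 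Hence it suffices to show that \eqref{eq:pdr-ref-1} and \eqref{eq:pdr_ref} coincide.

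I would then observe that the two programs share the same objective and the same finite-dimensional constraints on $\omega$, $\eta$, $\bm r$, $\bm s$, and $\bm \alpha$; the second-order cone inequalities and the trace/Frobenius-norm inequalities match term by term. The only mismatch lies in the treatment of the semi-infinite constraints. The inequalities $\bm{\mathcal T}_\ell(\bm x)^\top \bm\xi \leq (\bm{\mathcal W}_\ell \bm\xi)^\top \bm Y_k \bm\xi + \lambda_\ell \bm\xi^\top \bm Q_k \bm\xi + \kappa_\ell \theta$ for all $\bm\xi \in \Xi_k$ and $\alpha_k + \bm\xi^\top \bm B_k \bm\xi \geq 0$ for all $\bm\xi \in \Xi_k$ retained in \eqref{eq:pdr-ref-1} are precisely the systems \eqref{eq:pdr_cons1} and \eqref{eq:pdr_cons2}. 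Applying Proposition \ref{prop:constraint_reformulation} replaces them by the finite copositive systems \eqref{eq:pdr_cons1_1} and \eqref{eq:pdr_cons2_1}, which agree with the copositive constraints appearing in \eqref{eq:pdr_ref} once the auxiliary variable $\rho_k$ is eliminated.

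The elimination of $\rho_k$ requires a short verification that I would spell out. Proposition \ref{prop:constraint_reformulation} requires the existence of $\rho_k$ with $\rho_k \geq -\alpha_k$ and $\bm B_k - \rho_k \mathbf{e}_{S+1}\mathbf{e}_{S+1}^\top \in \mathcal{C}(\mathcal K_k)$. I would argue equivalence with the single condition $\bm B_k + \alpha_k \mathbf{e}_{S+1}\mathbf{e}_{S+1}^\top \in \mathcal{C}(\mathcal K_k)$ used in \eqref{eq:pdr_ref} by monotonicity: since $\mathbf{e}_{S+1}\mathbf{e}_{S+1}^\top$ is positive semidefinite and therefore lies in $\mathcal{C}(\mathcal K_k)$, decreasing $\rho_k$ only adds a nonnegative copositive correction, so the existence of any admissible $\rho_k$ is equivalent to the inclusion holding at $\rho_k = -\alpha_k$. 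The scalars $\pi^k_\ell$ cannot be eliminated in the same way because $\theta$ is a first-stage decision variable shared across all partitions, so both the copositive inclusion and the linear inequality $\pi^k_\ell + \kappa_\ell \theta \geq 0$ must be retained.

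Assembling these pieces yields the claimed copositive reformulation, and a final count of $O(K(L+S)(S+N_2))$ scalar variables and $(L+3)K$ copositive inclusions of dimension $S+1$ confirms the polynomial size. The main obstacle in writing the proof is modest bookkeeping — correctly lining up the variable relabeling (in particular the $\rho_k \mapsto -\alpha_k$ substitution) and verifying that no additional structural hypothesis beyond Assumption \ref{assum1} is needed, since that assumption is already the one underwriting the strong-duality result Lemma \ref{lem3} that Proposition \ref{prop:constraint_reformulation} depends on.
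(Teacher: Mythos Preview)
Your proposal is correct and follows the same approach as the paper's proof, which simply invokes Proposition~\ref{prop:constraint_reformulation} to replace the semi-infinite constraints in \eqref{eq:pdr-ref-1} by their copositive equivalents. You in fact supply more detail than the paper does, notably the monotonicity argument eliminating $\rho_k$; your aside that $\pi^k_\ell$ \emph{cannot} be eliminated the same way is not quite right (the identical monotonicity argument would let you set $\pi^k_\ell=-\kappa_\ell\theta$), but this is a comment on the form of the statement and does not affect the validity of the equivalence proof.
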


\begin{proof}
Replacing the semi-infinite constraints in $\eqref{eq:pdr-ref-1}$ with the equivalent copositive reformulations from Proposition \ref{prop:constraint_reformulation}, we arrive at the final COP \eqref{eq:pdr_ref}. This completes the proof. 
\end{proof}

\section{Out-of-Sample Performance Guarantees}
\label{performance_guarantee}
In this section, we establish that with appropriate choices of parameters  carefully chosen ambiguity set size, we can theoretically ensure an attractive finite sample guarantee for the solution of the PDR problem \eqref{eq:pdr}. We first state the following lemma that provides  a high confidence theoretical upper bound on the distance between the second-order moment matrix $\EE_{\PP_k} [\bm\xi\bm\xi^\top]$ and its empirical estimate $\EE_{{\hat{\PP}}_k} [\bm\xi\bm\xi^\top]$. 

\begin{lemma} \textup{(\cite[Corollary 5]{second-moment-bound}).} \label{lem4}
Let ${\hat{\PP}}_k$ be the empirical distribution generated from  $|\mathcal{I}_k|$ independent samples of the distribution $\PP_k \in \mathcal{P} (\Xi_k)$. Then, with probability at least $1-\rho_1$ over the choice of the samples, we have
\begin{equation*}
\left\| \EE_{\PP_k} [\bm\xi\bm\xi^\top] - \EE_{{\hat{\PP}}_k} [\bm\xi\bm\xi^\top] \right\|_F\leq \frac{R_k^2}{\sqrt{|\mathcal{I}_k|}}\left(2+\sqrt{2\ln\frac{1}{\rho_1}}\right),
\end{equation*}
where $R_k=\max_{\bm\xi\in\Xi_k}\|\bm\xi - \hat{\bm \xi} \|_2$. 
\end{lemma}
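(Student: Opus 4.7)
The plan is to view $\bm X_i := \hat{\bm\xi}_i \hat{\bm\xi}_i^\top$, $i \in \mathcal I_k$, as i.i.d.\ matrix-valued samples with mean $\EE_{\PP_k}[\bm\xi\bm\xi^\top]$, and to control the Frobenius deviation $\|\bm M\|_F$ with $\bm M := \EE_{\PP_k}[\bm\xi\bm\xi^\top] - \EE_{\hat\PP_k}[\bm\xi\bm\xi^\top]$ by the standard two-step empirical-process recipe: a bounded-differences inequality to concentrate $\|\bm M\|_F$ around its expectation, followed by a symmetrization argument to bound the expectation itself. The two contributions, once added, will reproduce precisely the factor $2 + \sqrt{2\ln(1/\rho_1)}$ that appears in the claim.

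For the concentration step, I would set $f(\hat{\bm\xi}_1,\ldots,\hat{\bm\xi}_{|\mathcal I_k|}) := \|\bm M\|_F$ and check bounded differences. Since $\|\bm\xi - \hat{\bm\xi}\|_2 \leq R_k$ on $\Xi_k$, after a recentering of the $\bm X_i$ at the anchor point we have $\|\bm X_i\|_F \leq R_k^2$, so swapping any single sample can change $f$ by at most $c_i := 2R_k^2/|\mathcal I_k|$ by the reverse triangle inequality applied to the Frobenius norm. McDiarmid's inequality then yields
\begin{equation*}
\PP\bigl(f > \EE f + t\bigr) \leq \exp\!\left(-\frac{2t^2}{\sum_i c_i^2}\right) = \exp\!\left(-\frac{|\mathcal I_k|\, t^2}{2 R_k^4}\right),
\end{equation*}
which, upon solving for $t$ at confidence level $\rho_1$, gives $f \leq \EE f + R_k^2\sqrt{2\ln(1/\rho_1)/|\mathcal I_k|}$ with probability at least $1-\rho_1$.

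For the expectation step, I would invoke the classical symmetrization argument: introducing an i.i.d.\ Rademacher sequence $\{\epsilon_i\}_{i\in\mathcal I_k}$ together with a ghost sample and applying Jensen's inequality twice,
\begin{equation*}
\EE\|\bm M\|_F \leq 2\,\EE\!\left\|\frac{1}{|\mathcal I_k|}\sum_{i\in\mathcal I_k}\epsilon_i \bm X_i\right\|_F \leq \frac{2}{|\mathcal I_k|}\sqrt{\sum_{i\in\mathcal I_k}\EE\|\bm X_i\|_F^2} \leq \frac{2 R_k^2}{\sqrt{|\mathcal I_k|}},
\end{equation*}
where the second inequality uses the identity $\EE[\epsilon_i\epsilon_j] = \mathbbm 1_{[i=j]}$ to kill the off-diagonal terms inside the Frobenius norm squared. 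Summing this bound with the McDiarmid tail contribution gives exactly the inequality in the lemma statement.

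The main obstacle is bookkeeping rather than a deep idea: the definition $R_k = \max_{\bm\xi\in\Xi_k}\|\bm\xi - \hat{\bm\xi}\|_2$ controls distances to an anchor, whereas the estimates above are cleanest when $\|\bm\xi\|_2 \leq R_k$ outright. The remedy is to expand
\begin{equation*}
\bm\xi\bm\xi^\top = (\bm\xi-\hat{\bm\xi})(\bm\xi-\hat{\bm\xi})^\top + \hat{\bm\xi}(\bm\xi-\hat{\bm\xi})^\top + (\bm\xi-\hat{\bm\xi})\hat{\bm\xi}^\top + \hat{\bm\xi}\hat{\bm\xi}^\top,
\end{equation*}
observing that the deterministic term cancels in $\bm M$ while the two cross terms collapse into $\bigl(\EE_{\PP_k}[\bm\xi] - \EE_{\hat\PP_k}[\bm\xi]\bigr)\hat{\bm\xi}^\top$, which admits a vector-valued Hoeffding bound that can be absorbed into the same absolute constants. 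Carrying out this recentering carefully enough that the final constants are exactly $2$ and $\sqrt{2}$, rather than slightly larger universal constants, is the only delicate piece of work.
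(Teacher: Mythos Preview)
The paper does not prove this lemma; it is quoted verbatim from an external reference (Corollary~5 of \cite{second-moment-bound}), so there is no in-paper argument to compare against. Your McDiarmid-plus-symmetrization outline is precisely the standard proof of that cited result and reproduces the constants $2$ and $\sqrt{2}$ exactly, \emph{provided} $R_k$ is interpreted as a bound on $\|\bm\xi\|_2$ over $\Xi_k$ (which is how the source states it).

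Your self-identified obstacle is real and not merely bookkeeping. With $R_k = \max_{\bm\xi\in\Xi_k}\|\bm\xi-\hat{\bm\xi}\|_2$ as written in the lemma, the recentering expansion you propose leaves the cross term $\hat{\bm\xi}\bigl(\EE_{\PP_k}[\bm\xi]-\EE_{\hat\PP_k}[\bm\xi]\bigr)^\top + \bigl(\EE_{\PP_k}[\bm\xi]-\EE_{\hat\PP_k}[\bm\xi]\bigr)\hat{\bm\xi}^\top$, whose Frobenius norm scales like $\|\hat{\bm\xi}\|_2 \cdot R_k/\sqrt{|\mathcal I_k|}$ rather than $R_k^2/\sqrt{|\mathcal I_k|}$; unless $\|\hat{\bm\xi}\|_2$ is itself controlled by $R_k$, you cannot absorb this into the stated constants. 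In the paper's setting one has $\bm\xi=[\bm\zeta,1]^\top$ with $\Xi_k$ compact, so both $\|\bm\xi\|_2$ and $\|\hat{\bm\xi}\|_2$ are bounded and the distinction is immaterial up to a redefinition of $R_k$; but as a standalone proof of the lemma exactly as stated, your remedy does not close the gap with the constants intact. The clean fix is simply to read $R_k$ as $\max_{\bm\xi\in\Xi_k}\|\bm\xi\|_2$, which is almost certainly what the cited source has.
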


Furthermore, the following lemma provides a high confidence theoretical upper bound on the $\chi^2$-distance between the partition probability vector $\bm p$ and the empirical estimate $\hat{\bm p}$.


\begin{lemma} \label{lem5}
Let the probability vector $\bm p \in \RR^K$ satisfies $p_k = \PP(\bm \xi \in \Xi_k), \ k \in [K]$. Its empirical estimate $\hat{\bm p} \in \RR^K$ is defined as $\hat{p}_k = \tfrac{1}{N}\sum_{i \in [N]}\mathbbm{1}(\hat{\bm \xi}_i \in \Xi_k)$, $k \in [K]$. Then, we have
\begin{equation}
\label{ineq:probability-bound}
\sum_{k = 1}^{K} (p_k - \hat{p}_k)^2/p_k \leq \frac{1}{N}(K - 1 + \sqrt{-(K-1) \ln{\rho_2}} - 2\ln \rho_2)
\end{equation}
with probability at least $1-C\rho_2$, where C is a constant that depends on the underlying distribution.
\end{lemma}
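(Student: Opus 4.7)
The plan is to recognize the left-hand side of \eqref{ineq:probability-bound} as (essentially) the rescaled Pearson chi-squared goodness-of-fit statistic and then appeal to a finite-sample exponential tail bound for it.

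First, I set $T_N := N \sum_{k=1}^K (p_k - \hat{p}_k)^2 / p_k$. Since $N\hat{\bm p}$ follows a $\textup{Multinomial}(N, \bm p)$ distribution, $T_N$ is precisely Pearson's chi-squared statistic. Applying the multivariate central limit theorem to $\sqrt{N}(\hat{\bm p} - \bm p)$ yields a limiting Gaussian with covariance $\diag(\bm p) - \bm p\bm p^\top$, which has rank $K-1$, and the continuous mapping theorem gives the classical limit $T_N \Rightarrow \chi^2_{K-1}$. In particular $\EE[T_N] = K-1$ and $\textup{Var}(T_N) = 2(K-1) + O(1/N)$.

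Second, I would invoke a non-asymptotic exponential tail bound for $T_N$. One clean route is Bernstein's inequality applied to the centered statistic $T_N - (K-1)$, using the variance estimate above together with the bounded-increment structure of the indicator summands $\mathbbm{1}(\hat{\bm \xi}_i \in \Xi_k)$. This produces an inequality of the form $\PP(T_N \geq K-1 + a\sqrt{(K-1) t} + b t) \leq C e^{-t}$ for some absolute constants $a, b$ and a constant $C$ depending on higher-order moments of the multinomial, hence on the underlying probabilities $p_1,\ldots,p_K$. Alternatively, the Laurent-Massart bound $\PP(\chi^2_{K-1} \geq K-1 + 2\sqrt{(K-1) t} + 2 t) \leq e^{-t}$ can be transported to $T_N$ via a Berry-Esseen-type estimate at the cost of the same multiplicative prefactor $C$.

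Third, I would set $t = -\ln \rho_2$ so that $Ce^{-t} = C \rho_2$, giving $T_N \leq K-1 + \sqrt{-(K-1)\ln \rho_2} - 2\ln \rho_2$ with probability at least $1 - C \rho_2$; dividing both sides by $N$ then reproduces \eqref{ineq:probability-bound}. The principal obstacle will be pinning down the exact numerical coefficients in front of the $\sqrt{(K-1) t}$ and $t$ terms: generic Bernstein or Laurent-Massart bounds give the correct functional dependence but different coefficients, and matching the stated expression requires either a careful calculation of the sub-exponential norm of $T_N - (K-1)$ (exploiting that its asymptotic variance is exactly $2(K-1)$, not $4(K-1)$) or absorbing the remaining slack into the distribution-dependent constant $C$.
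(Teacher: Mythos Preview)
Your proposal is correct and takes essentially the same approach as the paper: recognize $N\sum_k (p_k-\hat p_k)^2/p_k$ as Pearson's statistic, invoke its asymptotic $\chi^2_{K-1}$ law, apply the Laurent--Massart tail bound $\textup{Prob}(z \geq d + 2\sqrt{dt} + 2t) \leq e^{-t}$ with $t=-\ln\rho_2$, and absorb the finite-sample approximation error into the distribution-dependent constant $C$. The paper's proof is in fact terser than yours---it does not make the Berry--Esseen transfer explicit---and, as you anticipated, it actually obtains a coefficient $2$ before the square-root term, which does not match the coefficient $1$ stated in the lemma.
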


\begin{proof}
It has been shown that $N \sum_{k = 1}^{K} (p_k - \hat{p}_k)^2/p_k$ can be treated as a Pearson's cumulative test statistic, which asymptotically approaches a $\chi ^{2}$ distribution with $d = K - 1$ degrees of freedom. The paper $\cite{delta-convergence}$ provides the theoretical upper tail bound for $\chi^2$ statistics $z$ with $d$ degrees of freedom as
\begin{equation*}
    \textup{Prob} \left( z \geq d + 2\sqrt{dt} + 2t\right ) \leq e^{-t},
\end{equation*}
where $t \geq 0$. By setting $\rho_2 = e^{-t}$, we obtain 
\begin{equation*}
    \textup{Prob} \left( N \sum_{k = 1}^{K} (p_k - \hat{p}_k)^2/p_k \geq K - 1 + 2\sqrt{-(K - 1)\ln{\rho_2}} - 2\ln{\rho_2} \right ) \leq C\rho_2,
\end{equation*}
where $C$ is a constant that depends on the underlying distribution. This yields the result in $\eqref{ineq:probability-bound}$.
\end{proof}

Based on these theoretical bounds, we can establish that, for carefully chosen sizes of the partition ambiguity sets~$\mathcal{P}_k$, $k \in [K]$, and the uncertainty set $\Delta$, the true underlying distribution $\PP^{\star}$ is contained in the ambiguity set $\mathcal{P}$ with a high probability. This enables us to derive the out-of-sample performance guarantee on the data-driven solution $\hat{\bm x}$ of \eqref{eq:pdr}. 

\begin{theorem} \textup{(Finite sample guarantee).} \label{finite sample guarantee}
Let the ambiguity set $\mathcal{P}$ in \eqref{eq:pdr} be defined as in \eqref{eq:ambiguity_set}. Setting the robustness parameters to $\epsilon_k = \tfrac{R_k^2}{\sqrt{|\mathcal{I}_k|}} \left(2+\sqrt{2\ln\tfrac{K}{\rho_1}}\right)$, $k \in [K]$, and $\gamma = \tfrac{1}{N}\left (K - 1 + 2\sqrt{-(K-1)\ln{\rho_2}} - 2\ln{\rho_2}\right )$, we can ascertain that
\begin{equation*}
    \textup{Prob} \left( J^{\textup{PDR}}  \geq \bm c^\top \hat{\bm x} +  \PP^\star \textup{-CVaR}_\delta [Z(\hat{\bm x}, \bm \xi)] \right) \geq 1  - \rho_1 - C\rho_2,
\end{equation*}
where $\hat{\bm x}$ is an optimal solution of \eqref{eq:pdr}, C is a constant, and $\PP^\star$ is the true underlying distribution.

\end{theorem}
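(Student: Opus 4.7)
The plan is to show that, under the prescribed choices of $\epsilon_k$ and $\gamma$, the true underlying distribution $\PP^\star$ lies in the data-driven ambiguity set $\mathcal{P}$ with probability at least $1 - \rho_1 - C\rho_2$, and then to leverage this high-probability containment together with a feasibility-and-optimality argument on the PDR problem \eqref{eq:pdr_ref} to obtain the claimed inequality.

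First, I would decompose $\PP^\star$ via the law of total probability into its conditional distributions $\PP^\star_k \coloneqq \PP^\star(\cdot \mid \bm\xi \in \Xi_k)$ on the Voronoi partitions together with the marginal partition probabilities $p^\star_k \coloneqq \PP^\star(\bm\xi \in \Xi_k)$. By definition \eqref{eq:ambiguity_set}, $\PP^\star \in \mathcal{P}$ holds whenever $\bm p^\star \in \Delta$ \emph{and} $\PP^\star_k \in \mathcal P_k$ for every $k \in [K]$. I would then invoke Lemma \ref{lem4} at confidence level $\rho_1/K$ on each partition, which yields $\|\EE_{\PP^\star_k}[\bm\xi\bm\xi^\top] - \EE_{\hat{\PP}_k}[\bm\xi\bm\xi^\top]\|_F \leq \epsilon_k$ with probability at least $1 - \rho_1/K$; a union bound over $k \in [K]$ then gives $\PP^\star_k \in \mathcal P_k$ simultaneously for all $k$ with probability at least $1 - \rho_1$. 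Lemma \ref{lem5} applied at confidence $\rho_2$ guarantees $\bm p^\star \in \Delta$ with probability at least $1 - C\rho_2$. A final union bound across the two complementary events establishes $\PP^\star \in \mathcal P$ with probability at least $1 - \rho_1 - C\rho_2$.

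Second, I would argue the deterministic implication that, whenever $\PP^\star \in \mathcal{P}$, one has $J^{\textup{PDR}} \geq \bm c^\top \hat{\bm x} + \PP^\star\textup{-CVaR}_\delta[Z(\hat{\bm x}, \bm\xi)]$. Fix an optimizer $(\hat{\bm x}, \hat\theta, \{\hat{\bm Y}_k, \hat{\bm Q}_k\})$ of \eqref{eq:pdr_ref}. Feasibility of the semi-infinite constraints indexed by $\ell \in [L]$ ensures that $\bm y(\bm\xi) = \hat{\bm Y}_k \bm\xi$ on $\Xi_k$ is feasible for \eqref{eq:second_stage_value}, whence $Z(\hat{\bm x}, \bm\xi) \leq (\bm D \bm\xi)^\top \hat{\bm Y}_k \bm\xi$ on $\Xi_k$. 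The constraints for $\ell = L+1, L+2$ then force $\bm\xi^\top \hat{\bm Q}_k \bm\xi \geq \max\{0,\, Z(\hat{\bm x}, \bm\xi) - \hat\theta\}$ on each $\Xi_k$. Taking expectation under $\PP^\star$ and invoking the Rockafellar--Uryasev dual representation of CVaR yields
\[
\hat\theta + \tfrac{1}{\delta}\sum_{k \in [K]} p^\star_k \, \EE_{\PP^\star_k}\bigl[\bm\xi^\top \hat{\bm Q}_k \bm\xi\bigr] \;\geq\; \PP^\star\textup{-CVaR}_\delta[Z(\hat{\bm x}, \bm\xi)].
\]
Because $\PP^\star \in \mathcal{P}$, the left-hand side is dominated by the worst-case expression evaluated at $(\hat{\bm Y}_k, \hat{\bm Q}_k)$, which equals $J^{\textup{PDR}} - \bm c^\top \hat{\bm x}$. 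Combining with the probability bound established in the first part finishes the proof.

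I expect the main obstacle to be the last step, which requires stitching together the two flavors of semi-infinite constraints in \eqref{eq:pdr} --- those certifying feasibility of the PLDR policy for the original second-stage LP, and those certifying that the PQDR epigraphical variable dominates the CVaR hinge loss --- and then correctly identifying the resulting sum against the supremum defining $J^{\textup{PDR}}$. The probabilistic portion is essentially bookkeeping: Lemmas \ref{lem4} and \ref{lem5} with two applications of the union bound, where the $K/\rho_1$ inside the logarithm in the stated $\epsilon_k$ is exactly what absorbs the union-bound loss across the $K$ partitions.
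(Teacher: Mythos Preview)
Your proposal is correct and follows essentially the same approach as the paper's proof: Lemmas~\ref{lem4} and~\ref{lem5} with two union bounds to get $\PP^\star\in\mathcal P$ with probability at least $1-\rho_1-C\rho_2$, followed by the chain of inequalities passing from $J^{\textup{PDR}}$ through the worst-case objective at the optimizer, down to the true-CVaR via the Rockafellar--Uryasev representation. Your deterministic step is in fact spelled out more carefully than the paper's, which simply asserts that ``decision rules provide a conservative approximation'' for the key inequality $\sum_k p_k^\star\,\EE_{\PP_k^\star}[\bm\xi^\top\hat{\bm Q}_k\bm\xi]\geq \EE_{\PP^\star}[\max\{Z(\hat{\bm x},\bm\xi)-\hat\theta,0\}]$, whereas you explicitly trace this through the constraints indexed by $\ell\in[L]$ and $\ell\in\{L+1,L+2\}$.
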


\begin{proof}
According to Lemma \ref{lem4}, by setting 
$$\epsilon_k = \frac{R_k^2}{\sqrt{|\mathcal{I}_k|}} \left(2+\sqrt{2\ln\frac{K}{\rho_1}}\right)$$
in the partition ambiguity set \eqref{eq:ambiguity_set-k}, we have
$$\textup{Prob} \left( \PP_k^\star \notin \mathcal{P}_k \right) \leq \frac{\rho_1}{K} \quad \forall k \in [K].$$
Based on Lemma \ref{lem5}, describing the uncertainty set $\eqref{eq:ambiguity_set-prob}$ with the robustness parameter 
$$\gamma = \frac{1}{N}\left (K - 1 + 2\sqrt{-(K-1)\ln{\rho_2}} - 2\ln{\rho_2}\right),$$
we can verify that
$$\textup{Prob} \left( \bm p^\star \notin \Delta \right) \leq C\rho_2.$$
Using union bound (Boole's inequality), we have 
\begin{equation*}
\begin{array}{cl}
  \displaystyle  \textup{Prob} \left(\{\exists k \in [K] \;\textup{s.t.}\; \PP_k^\star \notin \mathcal{P}_k\}\cup \{\bm p^\star \notin \Delta \}\right) 
    &\displaystyle\leq \sum_{k \in [K]} \textup{Prob} \left( \PP_k^\star \notin \mathcal{P}_k \right) + \textup{Prob} \left( \bm p^\star \notin \Delta \right) \\
    & \displaystyle\leq \rho_1 + C\rho_2,
\end{array}
\end{equation*}
which implies
\begin{equation*}
\begin{array}{cc}
    & \textup{Prob} \left (\bm p^\star \in \Delta, \; \PP_k^\star \in \mathcal{P}_k \ \forall k \in [K] \right) = \textup{Prob} \left(\PP^\star \in \mathcal{P} \right) \geq 1 - \rho_1 - C\rho_2.
\end{array}
\end{equation*}
Accordingly, with $\hat{\theta}$ denoting the optimal solution of \eqref{eq:pdr}, we can ensure with probability at least $1 - \rho_1 - C\rho_2$,
\begin{equation*}
\begin{array}{rlll}
  J^{\textup{PDR}}\geq & \displaystyle \bm c^\top \hat{\bm x} + \hat \theta + \tfrac{1}{\delta} \sum_{k \in [K]}  \PP^\star (\bm \xi \in \Xi_k)\EE_{\PP^\star} [\bm \xi^\top \bm Q_k \bm\xi | \bm\xi\in\Xi_k]  \\
  \geq & \displaystyle \bm c^\top \hat{\bm x} + \hat \theta + \tfrac{1}{\delta} \EE_{\PP^\star} [\max \{ Z(\hat{\bm x},\bm \xi)-\hat{\theta},0 \}]  \\
  \geq & \displaystyle \bm c^\top \hat{\bm x} + \inf_{\theta} \left( \theta + \tfrac{1}{\delta} \EE_{\PP^\star} [\max \{ Z(\hat{\bm x},\bm \xi)-\theta,0 \}] \right)  \\
  = & \bm c^\top \hat{\bm x} + \PP^\star\textup{-CVaR} [Z(\hat{\bm x},  \bm \xi)].
\end{array}
\end{equation*}
The first inequality holds because the ambiguity set $\mathcal{P}$ defined above is verified to contain the true probability~$\PP^{\star}$ with a probability of at least $1-\rho_1 - C\rho_2$. With $\PP^{\star}$ contained in the ambiguity set, the worst-case expectation must be greater than or equal to expectation under the true distribution. The second inequality is satisfied because decision rules provide a conservative approximation. 
The third inequality holds because~$\hat{\theta}$ may not be the optimal value of $\displaystyle \inf_{\theta} \left( \theta + \tfrac{1}{\delta} \EE_{\PP^\star} [\max \{ Z(\hat{\bm x},\bm \xi)-\theta,0 \}] \right)$. The last equality is obtained based on the definition of CVaR. This completes the proof. 
\end{proof}

Theorem \ref{finite sample guarantee} demonstrates that with judicious choices for the ambiguity set parameters, the optimal value~$J^{\textup{PDR}}$ of the PDR problem, which can be solved via COP~$\eqref{eq:pdr_ref}$, provides a $(1 - \rho_1 - C\rho_2)$ confidence bound on the out-of-sample performance of the data-driven solution. Additionally, the above theorem provides guidance for choosing the values of the robustness parameters in practice.

\section{A Decomposition Algorithm}
\label{decomposition_algorithm}
 The copositive program \eqref{eq:pdr_ref} involves $O(K)$ copositive constraints, which introduces additional computational challenges when we improve the approximation quality by increasing the number of partitions. 
 Fortunately, the structure of problem~\eqref{eq:pdr_ref} enables us to solve the subproblems corresponding to different partitions in parallel. 
Exploiting this decomposition structure, in this section we develop an iterative Benders-type algorithm 
 to solve the PDR problem \eqref{eq:pdr_ref}.

To design the decomposition algorithm for  problem~\eqref{eq:pdr_ref}, we consider the following equivalent problem: 
\begin{equation}
\label{eq:pdr_ref2}
\begin{array}{ccll}
J^{\textup{PDR}} = &  \displaystyle \inf & \bm c^\top \bm x + \theta + \frac{1}{\delta} ( \gamma \omega - \eta - 2\hat{\bm p}^\top \bm r + 2\omega\hat{\bm p}^\top \mathbf{e}) \\
& \st & \bm x \in \X, \  \theta, \eta \in \RR,\ \omega \in \RR_{+},\ \bm r, \bm s \in \RR^{K} \\
& & \left.
\begin{aligned}
& s_k + \eta \leq \omega, \
\sqrt{4r_k^2 + (s_k +  \eta)^2} \leq 2\omega - s_k -\eta\\
& Z^{\textup{PDR}}_k \left(\bm x, \theta \right) \leq s_k \\
\end{aligned} \right \} \forall k \in [K].
\end{array}
\end{equation}
Here, the subproblem corresponding to the $k^{\textup{th}}$ partition  is given by
\begin{equation}
\label{eq:pdr_ref2_z}
\begin{array}{ccll}
Z^{\textup{PDR}}_k \left(\bm x, \theta \right) \coloneqq & \displaystyle \inf & \alpha_k + \tr(\bm Q_k\hat{\bm\Omega}_k) + \tr(\bm B_k\hat{\bm\Omega}_k) + \epsilon_k\|\bm Q_k^\top + \bm B_k\|_F  \\
& \st & \bm Y_k \in \RR^{N_2 \times (S+1)}, \ \bm Q_k \in \RR^{(S+1) \times (S+1)}, \ \bm B_k \in \mathbb{S}^{S+1},\ \alpha_k \in \RR, \ \pi_k \in \RR^{L+2} \\
& & \pi^k_\ell + \kappa_\ell \theta \geq 0 \quad \forall \ell \in [L+2] \\
& & \bm \Delta^k_\ell(\bm x,\bm Y_k,\bm Q_k) - \pi^k_\ell \mathbf{e}_{S+1} \mathbf{e}_{S+1}^\top 
\in \mathcal{C} (\mathcal K_k) \quad \forall \ell \in [L+2]  \\
& & \bm B_k + \alpha_k \mathbf{e}_{S+1} \mathbf{e}_{S+1}^\top \in \mathcal{C} (\mathcal K_k),
\end{array}
\end{equation}
where the affine functions $\bm \Delta^k_\ell(\bm x,\bm Y_k,\bm Q_k), \ell \in [L+2], \ k \in [K]$, are defined in \eqref{eq:pldr_omega}. By the equivalence, we can obtain the optimal value and solutions of problem $\eqref{eq:pdr_ref}$ by solving the COP $\eqref{eq:pdr_ref2}$. Moreover, the special block structure of problem $\eqref{eq:pdr_ref2}$ enables us to decompose it into $K$ smaller subproblems $\eqref{eq:pdr_ref2_z}$. 

Before describing the decomposition algorithm, we first introduce the following terminology that will be used to obtain 
the theoretical  results for the algorithm. 
\begin{defi} 
\textup{(Complete recourse under linear decision rules).} We say that the  two-stage distributionally robust optimization problem $\eqref{eq:2s-dro1}$ has complete recourse under linear decision rules if 
there exists $\bm Y \in \RR^{N_2 \times (S+1)}$ such that $(\bm W_\ell\bm \xi)^\top \bm Y \bm \xi > 0$ for all $\bm \xi \in \Xi$ and $\ell \in [L]$.
\end{defi}
The above condition implies the second-stage problem \eqref{eq:second_stage_value}  is always feasible under linear decision rules, i.e., there exists  $\bm Y \in \RR^{N_2 \times (S+1)}$ such that  $\bm y = \bm Y \bm \xi$ is feasible to  \eqref{eq:second_stage_value} for every $\bm x \in \mathcal{X}$ and $\bm \xi \in \Xi$. The existence of feasible LDR also implies the existence of feasible PDR. By the equivalence between problems $\eqref{eq:pdr}$ and $\eqref{eq:pdr_ref2}$, we conclude that the subproblem \eqref{eq:pdr_ref2_z} is feasible for any fixed $\bm x \in \mathcal{X}$ and $\theta \in \RR$, i.e., $Z^{\textup{PDR}}_k \left(\bm x, \theta \right)<~\infty, \ k \in~[K]$.

\begin{lemma}
The complete recourse under linear decision rules is satisfied if and only if there exist \linebreak $\bm Y \in \RR^{N_2 \times (S+1)}$ and $\beta_\ell \in \RR_{++}$, $\ell\in[L]$, such that 
\begin{equation}
\label{eq: complete_recourse_LDR}
    \frac{1}{2}\left (\bm Y^\top \bm W_\ell + \bm W_\ell^\top \bm Y \right ) - \beta_\ell \mathbf{e}_{S+1} \mathbf{e}_{S+1}^\top \in {\mathcal{C}(\mathcal{K})} \quad \forall  \ell \in [L].
\end{equation}
\end{lemma}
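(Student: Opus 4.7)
The plan is to homogenize the complete recourse condition, turn it into a statement about the quadratic form associated with the symmetrized matrix $M_\ell := \tfrac{1}{2}(\bm Y^\top \bm W_\ell + \bm W_\ell^\top \bm Y)$, and then exploit the conic structure relating $\mathcal{K}$ and $\Xi$. The key observation is that, because $(\bm W_\ell\bm\xi)^\top \bm Y \bm\xi$ is a scalar, it equals $\bm\xi^\top M_\ell \bm\xi$. Thus complete recourse under LDR is equivalent to the existence of $\bm Y$ with $\bm\xi^\top M_\ell \bm\xi > 0$ for every $\bm\xi \in \Xi$ and every $\ell \in [L]$, while the copositive condition \eqref{eq: complete_recourse_LDR} is equivalent to
$$\bm\xi^\top M_\ell \bm\xi \ \ge\ \beta_\ell (\mathbf{e}_{S+1}^\top \bm\xi)^2 \quad \forall \bm\xi \in \mathcal{K},\ \forall \ell \in [L].$$
So it remains to show that these two conditions coincide.

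For the ``if'' direction, take any $\bm\xi \in \Xi$: since $\Xi \subseteq \mathcal{K}$ and $\mathbf{e}_{S+1}^\top \bm\xi = 1$, the inequality above immediately yields $\bm\xi^\top M_\ell \bm\xi \ge \beta_\ell > 0$, so complete recourse holds. For the ``only if'' direction, suppose $\bm Y$ realizes complete recourse. Since $\Xi$ is compact (a slice of the compact convex cone $\mathcal{K}$ from Assumption~\ref{assum1}) and $\bm\xi \mapsto \bm\xi^\top M_\ell \bm\xi$ is continuous, the minimum $\beta_\ell^\star := \min_{\bm\xi\in\Xi} \bm\xi^\top M_\ell \bm\xi$ is attained and strictly positive; set $\beta_\ell := \beta_\ell^\star$. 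It remains to verify $\bm\xi^\top M_\ell \bm\xi \ge \beta_\ell (\mathbf{e}_{S+1}^\top \bm\xi)^2$ for all $\bm\xi \in \mathcal{K}$. If the last coordinate $\nu = \mathbf{e}_{S+1}^\top\bm\xi$ is positive, then $\bm\xi/\nu \in \Xi$ by conic scaling, and homogeneity of the quadratic form gives $\bm\xi^\top M_\ell \bm\xi \ge \beta_\ell \nu^2$.

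The delicate case is $\nu = 0$, which I expect to be the main obstacle. Here I would argue that compactness of $\mathcal{K}$ forces $\bm\xi = \bm 0$: fixing any $\bm\xi_0 \in \Xi$, convexity (or the cone property) gives $\bm\xi_0 + t\bm\xi \in \mathcal{K}$ for all $t \ge 0$, so boundedness of $\mathcal{K}$ forces $\bm\xi = \bm 0$, making the inequality trivial. An alternative route, should the compactness of $\mathcal{K}$ be read only as compactness of $\Xi$, is to replace $\beta_\ell^\star$ by any slightly smaller $\beta_\ell \in (0,\beta_\ell^\star)$ and use a perturbation/closure argument on the lineality part of $\mathcal{K}$ to absorb directions with $\nu = 0$. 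Assembling the two directions then yields the claimed equivalence.
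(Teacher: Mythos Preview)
Your proof is correct and takes a genuinely different, more elementary route than the paper. The paper reuses its copositive machinery: it casts the complete-recourse condition as the constraint $0<\inf_{\bm\xi\in\Xi}\bm\xi^\top\bm W_\ell^\top\bm Y\bm\xi$, invokes Lemma~\ref{lem2} to rewrite the infimum as a completely positive program, then applies strong duality (Lemma~\ref{lem3}) to identify its optimal value with the copositive supremum $\sup\{\beta_\ell:\tfrac12(\bm Y^\top\bm W_\ell+\bm W_\ell^\top\bm Y)-\beta_\ell\mathbf e_{S+1}\mathbf e_{S+1}^\top\in\mathcal C(\mathcal K)\}$, with dual attainment coming from the Slater point of Lemma~\ref{lem:interior-point}. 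You instead argue directly via homogeneity: set $\beta_\ell=\min_{\bm\xi\in\Xi}\bm\xi^\top M_\ell\bm\xi>0$ and scale any $\bm\xi\in\mathcal K$ with $\nu>0$ back into $\Xi$. Your argument is self-contained and bypasses the CPP/COP duality apparatus entirely; the paper's version has the virtue of recycling the reformulation lemmas it has already established.

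One small correction: in the $\nu=0$ case you appeal to ``boundedness of $\mathcal K$'', but a full-dimensional cone is never bounded (Assumption~\ref{assum1} is awkwardly worded in this respect). Your own construction already supplies the fix: since $\bm\xi_0\in\Xi$ and $\mathbf e_{S+1}^\top\bm\xi=0$, the point $\bm\xi_0+t\bm\xi$ has last coordinate $1$ and therefore lies in $\Xi$ for all $t\ge0$; compactness of $\Xi$ (not of $\mathcal K$) then forces $\bm\xi=\bm 0$. Your proposed ``alternative route'' with a perturbed $\beta_\ell$ is thus unnecessary.
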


\begin{proof}
See the Appendix A.2.
\end{proof}
The condition $\eqref{eq: complete_recourse_LDR}$ can be sufficiently checked by replacing the copositive cone with its inner semidefinite approximations described in Appendix B. 
We also impose the following condition for the theoretical convergence guarantee of the algorithm. 

\begin{defi}
\textup{(Sufficiently expensive recourse)}. The two-stage distributionally robust optimization problem~\eqref{eq:2s-dro1} has sufficiently expensive recourse if for any fixed $\bm x \in \mathcal{X}$ and $\bm \xi \in \Xi$, the dual of the recourse problem~\eqref{eq:second_stage_value} is feasible. 
\end{defi}
\noindent 
The sufficiently expensive recourse condition guarantees that for any fixed $\bm x \in \mathcal{X}$ and $\bm \xi \in \Xi$, the subproblem~
$\eqref{eq:pdr_ref2_z}$ is not unbounded, i.e., $Z^{\textup{PDR}}_k \left(\bm x, \theta \right) > -\infty$, for every $k \in [K]$.

In the following, we show the formulations of the \textit{subproblems} and the \textit{master problem}, and describe the framework of the decomposition algorithm. We also establish that under certain assumptions, the algorithm is guaranteed to terminate in a finite number of iterations.

For fixed first-stage decision variables $(\hat{\bm x}, \hat{\theta})$, we generate the $k^{\textup{th}}$ \textit{subproblem}, which corresponds to the dual problem of the COP $\eqref{eq:pdr_ref2_z}$, as follows:

\begin{equation}
\label{eq:benders_sp}
\begin{array}{clll}
{Z^{\textup{PDR}}_k}^* \left(\hat{\bm x}, \hat{\theta} \right) \coloneqq & \displaystyle \sup & \displaystyle \frac{1}{2}\sum_\ell  \tr\left (\bm H_\ell^k(\mathbf{e}_{S+1} \bm{\mathcal{T}}_\ell (\hat{\bm x})^\top + \bm{\mathcal{T}}_\ell (\hat{\bm x}) \mathbf{e}_{S+1}^\top)\right )  - \sum_\ell \kappa_\ell \hat{\theta} \mathbf{e}_{S+1}^\top \bm H_\ell^k \mathbf{e}_{S+1} \\
& \st & \bm G_k \in \RR^{(S+1) \times (S+1)}, \ \bm O_k \in \mathbb{S}^{S+1}, \ \bm H_\ell^k \in \mathbb{S}^{S+1} \quad \forall \ell \in [L+2] \\
& & \|\bm G_k\|_F \leq \epsilon_k, \ \sum_\ell \bm{\mathcal W}_\ell \bm H_\ell^k = \bm 0 \\
& & \mathbf{e}_{S+1}^\top \bm O_k \mathbf{e}_{S+1} = 1, \ \mathbf{e}_{S+1}^\top \bm H_\ell^k \mathbf{e}_{S+1} \geq 0 \quad \forall \ell \in [L+2] \\
& & \hat{\bm \Omega}_k + \bm G_k - \bm O_k  = \bm 0, \ \hat{\bm \Omega}_k + \bm G_k - \sum_\ell \lambda_\ell \bm H_\ell^k  = \bm 0 \\
& & \bm O_k \in \mathcal{C^*}(\mathcal{K}_k), \ \bm H_\ell^k \in \mathcal{C^*}(\mathcal{K}_k) \quad \forall \ell \in [L+2].\\
\end{array}
\end{equation}

\begin{proposition} \label{prop5}
Suppose the complete recourse under linear decision rules assumption holds, then strong duality holds between the copositive program $\eqref{eq:pdr_ref2_z}$ and the completely positive program $\eqref{eq:benders_sp}$, i.e., $Z^{\textup{PDR}}_k \left(\hat{\bm x}, \hat{\theta} \right) = {Z^{\textup{PDR}}_k}^* \left(\hat{\bm x}, \hat{\theta} \right)$. 
\end{proposition}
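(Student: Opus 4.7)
My plan is to verify Slater's condition for the primal copositive program $\eqref{eq:pdr_ref2_z}$, after which the standard conic duality theorem immediately delivers the desired strong duality with the dual completely positive program $\eqref{eq:benders_sp}$. The two problems form a generalized conic dual pair in the sense of Lemma $\ref{lem3}$, except that here the Slater point must be constructed for the copositive (rather than the completely positive) side. Recall that the interior of $\mathcal{C}(\mathcal{K}_k)$ consists exactly of the matrices $\bm M$ that are \emph{strictly} copositive with respect to $\mathcal{K}_k$, i.e., satisfying $\bm v^\top \bm M \bm v > 0$ for every nonzero $\bm v \in \mathcal{K}_k$; in particular, any positive definite matrix belongs to this interior.

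The construction of the Slater point leverages the complete recourse assumption, which by the preceding lemma supplies $\bm Y \in \mathbb{R}^{N_2 \times (S+1)}$ and strictly positive scalars $\beta_\ell > 0$, $\ell \in [L]$, with $\tfrac{1}{2}(\bm Y^\top \bm W_\ell + \bm W_\ell^\top \bm Y) - \beta_\ell \mathbf{e}_{S+1}\mathbf{e}_{S+1}^\top \in \mathcal{C}(\mathcal{K})$. Since $\mathcal{K}_k \subseteq \mathcal{K}$, the same inclusion remains valid in $\mathcal{C}(\mathcal{K}_k)$. I would then set $\bm Y_k = s\bm Y$, $\bm Q_k = t\bm I$, $\bm B_k = t\bm I$, $\alpha_k = t$, together with small positive scalars $\pi^k_\ell > 0$ satisfying $\pi^k_\ell + \kappa_\ell \hat{\theta} > 0$, for free parameters $s, t > 0$ to be chosen large enough. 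The $\bm B_k$-inclusion becomes $t\bm I + t\mathbf{e}_{S+1}\mathbf{e}_{S+1}^\top$, which is positive definite and hence strictly copositive. The constraints for $\ell = L+1, L+2$ reduce to matrices of the form $t\bm I$ plus a bounded correction in $s$, which remain positive definite whenever $t$ is chosen large relative to $s$. For $\ell \in [L]$, using the complete recourse inequality, strict copositivity of $\bm \Delta^k_\ell(\hat{\bm x}, s\bm Y, t\bm I) - \pi^k_\ell \mathbf{e}_{S+1}\mathbf{e}_{S+1}^\top$ reduces to the inequality $s\beta_\ell \nu^2 - \nu\,\bm{\mathcal T}_\ell(\hat{\bm x})^\top \bm v - \pi^k_\ell \nu^2 > 0$ along every nonzero $\bm v \in \mathcal{K}_k$ with $\nu = \mathbf{e}_{S+1}^\top \bm v$, which will be guaranteed for sufficiently large $s$.

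The principal technical obstacle lies in controlling the recession directions of $\mathcal{K}_k$, i.e., the cone points with $\nu = 0$. Along such directions the complete recourse lower bound $\beta_\ell \nu^2$ degenerates and every contribution involving $\mathbf{e}_{S+1}\mathbf{e}_{S+1}^\top$ vanishes, so no strict positivity can be extracted from those terms alone. Fortunately, Assumption $1$ forces the support set $\Xi$ to be compact, which in turn implies $\mathcal{K}_k \cap \{\nu = 0\} = \{\bm 0\}$, so every nonzero $\bm v \in \mathcal{K}_k$ has $\nu > 0$. By homogeneity in $\bm v$, we may normalize $\nu = 1$ and reduce the required strict positivity to a continuous check on the compact slice $\Xi_k$, from which a uniform positive lower bound follows. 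The free parameters $s$ and $t$ are then tuned large enough to dominate every remaining bounded perturbation, completing the construction of the Slater point. With Slater's condition in place for $\eqref{eq:pdr_ref2_z}$, the conic strong duality theorem yields $Z^{\textup{PDR}}_k(\hat{\bm x}, \hat{\theta}) = {Z^{\textup{PDR}}_k}^*(\hat{\bm x}, \hat{\theta})$, as claimed.
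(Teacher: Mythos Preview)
Your proposal is correct and follows essentially the same route as the paper: establish Slater's condition for the primal copositive program \eqref{eq:pdr_ref2_z} by using the complete recourse assumption to strictly satisfy the $\ell\in[L]$ constraints (after scaling $\bm Y$) and exploiting the free variables $\bm Q_k,\bm B_k,\alpha_k$ for the remaining ones, then invoke conic strong duality. The paper argues strict feasibility at the level of the equivalent semi-infinite inequalities on $\Xi_k$, whereas you work directly on the conic side and are more explicit about the recession directions $\{\nu=0\}$ via compactness of $\Xi$; but these are the same argument viewed through Proposition~\ref{prop:constraint_reformulation}.
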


\begin{proof}
See the Appendix A.3.
\end{proof}
\noindent In the subproblem $\eqref{eq:benders_sp}$, the first-stage decision variables $\hat{\bm x}$ and $\hat{\theta}$ only appear in the objective, implying that the feasible region of each subproblem is independent of  first-stage decisions. 

At every iteration, the first-stage decision variables $(\hat{\bm x}, \hat{\theta})$ are solved via the \textit{master problem} given by 
\begin{subequations}
\label{eq:benders_mp}
\begin{align}
\underline{J}^{\textup{PDR}} \coloneqq \displaystyle \inf \ & \bm c^\top \bm x + \theta + \frac{1}{\delta} ( \gamma \omega - \eta - 2\hat{\bm p}^\top \bm r + 2\omega\hat{\bm p}^\top \mathbf{e}) \nonumber\\
 \st \ & \bm x \in \X, \  \theta, \eta \in \RR,\ \omega \in \RR_{+},\ \bm r, \bm s \in \RR^{K} \nonumber \\
 & s_k + \eta \leq \omega, \
 \sqrt{4r_k^2 + (s_k +  \eta)^2} \leq 2\omega - s_k -\eta \quad \forall k \in [K] \nonumber\\
 & \displaystyle \frac{1}{2}\sum_\ell  \tr \left (\bm H_\ell^k(\mathbf{e}_{S+1} \bm{\mathcal{T}}_\ell (\bm x)^\top + \bm{\mathcal{T}}_\ell (\bm x) \mathbf{e}_{S+1}^\top)\right ) \nonumber \\[-8pt]
 & \qquad -\sum_\ell \kappa_\ell \theta \mathbf{e}_{S+1}^\top \bm  H_\ell^k \mathbf{e}_{S+1} \leq s_k \quad \forall (\bm H_1^k, \dots, \bm H_L^k ) \in \mathcal{V}^k \quad \forall k \in [K] \label{eq:benders_mp_cons1}\\
 & \displaystyle \frac{1}{2}\sum_\ell  \tr\left (\bm H_\ell^k(\mathbf{e}_{S+1} \bm{\mathcal{T}}_\ell (\bm x)^\top + \bm{\mathcal{T}}_\ell (\bm x) \mathbf{e}_{S+1}^\top)\right )  \nonumber \\[-8pt]
 & \qquad-\sum_\ell \kappa_\ell \theta \mathbf{e}_{S+1}^\top \bm H_\ell^k \mathbf{e}_{S+1} \leq 0 \quad \forall (\bm H_1^k, \dots, \bm H_L^k ) \in \mathcal{W}^k \quad \forall k \in [K], \label{eq:benders_mp_cons2}
\end{align}
\end{subequations}
which  involves tractable linear and second-order cone constraints only. 

The  constraint systems \eqref{eq:benders_mp_cons1} and \eqref{eq:benders_mp_cons2} serve as the \textit{optimality cuts} and the \textit{feasibility cuts}, respectively.  The left-hand sides of the constraints correspond to the objective function of the subproblem \eqref{eq:benders_sp}.  
To generate an \textit{optimality cut}  when the subproblem is feasible but not optimal, i.e., $\hat{s}_k \leq {Z^{\textup{PDR}}_k}^* \left(\hat{\bm x}, \hat{\theta} \right) < \infty$, we add the solution $(\bm H_\ell^k)_{\ell \in [L]}$ to the set $\mathcal{V}^k$ by solving the subproblem~$\eqref{eq:benders_sp}$. 
On the other hand, to generate a \textit{feasibility cut} in the master problem when the subproblem $\eqref{eq:benders_sp}$ is unbounded, i.e., ${Z^{\textup{PDR}}_k}^* \left(\hat{\bm x}, \hat{\theta} \right) = \infty$, we add the solution $(\bm H_\ell^k)_{\ell \in [L]}$ to the set $\mathcal{W}^k$ by solving
\begin{equation}
\label{eq:benders_fea_cut}
\begin{array}{ccl}
(\bm H_\ell^k)_{\ell \in [L]} \in & \displaystyle \textup{argmax}  & \displaystyle \frac{1}{2}\sum_\ell \left( \tr(\bm H_\ell^k(\mathbf{e}_{S+1} \bm{\mathcal{T}}_\ell (\hat{\bm x})^\top + \bm{\mathcal{T}}_\ell (\hat{\bm x}) \mathbf{e}_{S+1}^\top)) \right) - \sum_\ell \kappa_\ell \hat{\theta} \mathbf{e}_{S+1}^\top \bm H_\ell^k \mathbf{e}_{S+1} \\
& \st & \bm G_k \in \RR^{(S+1) \times (S+1)}, \ \bm O_k \in \mathbb{S}^{S+1}, \ \bm H_\ell^k \in \mathbb{S}^{S+1} \quad \forall \ell \in [L+2] \\
& & \|\bm G_k\|_F \leq \epsilon_k, \ \sum_\ell \bm{\mathcal W}_\ell \bm H_\ell^k = \bm 0 \\
& & \mathbf{e}_{S+1}^\top \bm O_k \mathbf{e}_{S+1} = 0, \ \mathbf{e}_{S+1}^\top \bm H_\ell^k \mathbf{e}_{S+1} \geq 0 \quad \forall \ell \in [L+2] \\
& & \bm G_k - \bm O_k  = \bm 0, \ \bm G_k - \sum_\ell \lambda_\ell \bm H_\ell^k  = \bm 0\\
& & \bm O_k \in \mathcal{C^*}(\mathcal{K}_k), \ \bm H_\ell^k \in \mathcal{C^*}(\mathcal{K}_k) \quad \forall \ell \in [L+2] \\
& & -\mathbf{1} \leq \bm G_k \leq \mathbf{1},\ -\mathbf{1} \leq \bm O_k \leq \mathbf{1}, \ -\mathbf{1} \leq \bm H_\ell^k \leq \mathbf{1} \quad \forall \ell \in [L+2]. \\
\end{array}
\end{equation}
\noindent Problem \eqref{eq:benders_fea_cut} finds the direction in which the subproblem $\eqref{eq:benders_sp}$ becomes unbounded as the decision variables keep increasing. The decomposition algorithm consecutively adds the optimal solution $(\bm H_\ell^k)_{\ell \in [L]}$ obtained from solving problems \eqref{eq:benders_sp} and \eqref{eq:benders_fea_cut} to the sets $\mathcal{V}^k$ and $\mathcal{W}^k$ at each iteration. 

Based on the above discussions, we describe the generalized decomposition procedure in Algorithm~\ref{alg:decomposition_algorithm}.
\begin{algorithm}[h] 
 \setstretch{0.9}
 \caption{The Decomposition Algorithm for Two-stage DRO}
 \begin{algorithmic}[1]  
 \label{alg:decomposition_algorithm}
 \STATE \textbf{Input:} parameters of problem \eqref{eq:pdr}, tolerance level $\eta \geq 0$
 \STATE \textbf{Output:} solution $(\bm x, \theta)$ to problem \eqref{eq:pdr} within $100\eta\%$ of optimal
 \STATE INITIALIZE $\overline{J}^{\textup{PDR}} = \infty, \mathcal{V}^k = \mathcal{W}^k = \emptyset \quad \forall k \in [K]$
 \WHILE{true}
    \STATE \textbf{Step 1} \\
     Solve the master problem \eqref{eq:benders_mp} and obtain the optimal value $\underline{J}^{\textup{PDR}}$ \\
    \STATE \textbf{Step 2} \\
    \FOR{$k \in [K]$}
    \STATE Given fixed $(\hat{\bm x}, \hat{\theta}, \left \{ \hat{s}_k \right \}_{k \in [K]})$, solve the subproblem \eqref{eq:benders_sp} and get the optimal value ${Z^{\textup{PDR}}_k}^* \left(\hat{\bm x}, \hat{\theta} \right)$
    \ENDFOR \\
    Let $\hat{J}^{\textup{PDR}} = \bm c^\top \hat{\bm x} + \hat{\theta} + \frac{1}{\delta} \sum_{k\in[K]} {Z^{\textup{PDR}}_k}^* \left(\hat{\bm x}, \hat{\theta} \right)$ \\
        \IF{$\hat{J}^{\textup{PDR}} < \overline{J}^{\textup{PDR}} $}
        \STATE Set $\overline{J}^{\textup{PDR}} = \hat{J}^{\textup{PDR}}$, and $\left ( \bm x^\star, \theta^\star \right) = \left(\hat{\bm x}, \hat{\theta} \right)$
        \ENDIF
    \STATE \textbf{Step 3} 
    \IF{$\overline{J}^{\textup{PDR}} - \underline{J}^{\textup{PDR}} \leq \eta \min \left ( |\overline{J}^{\textup{PDR}}|,|\underline{J}^{\textup{PDR}}| \right )$}
    \STATE Stop and output $\left ( \bm x^\star, \theta^\star \right )$
    \ENDIF
    \STATE \textbf{Step 4}
    \FOR{$k \in [K]$}
    \IF{${Z^{\textup{PDR}}_k}^* \left(\hat{\bm x}, \hat{\theta} \right) = \infty$}
    \STATE Add the feasibility cut by solving problem \eqref{eq:benders_fea_cut} and updating $\mathcal{W}_k = \mathcal{W}_k \bigcup ( \bm H_1^k, \dots, \bm H_L^k )$
    \ELSIF{$\hat{s}_k < {Z^{\textup{PDR}}_k}^* \left(\hat{\bm x}, \hat{\theta} \right) < \infty$}
    \STATE Add the optimality cut by solving problem \eqref{eq:benders_sp} and updating $\mathcal{V}_k = \mathcal{V}_k \bigcup ( \bm H_1^k, \dots, \bm H_L^k )$
    \ENDIF
    \ENDFOR
 \ENDWHILE
 \end{algorithmic}
 \end{algorithm}
 The algorithm guarantees finite convergence, as follows.
\begin{theorem} \textup{(Finite $\eta$-convergence).} \label{finite convoergence}
Assume $\mathcal{X}$ is a nonempty compact convex set or a finite discrete set. When the two-stage DRO problem $\eqref{eq:2s-dro1}$ satisfies both the complete recourse under linear decision rules and sufficiently expensive recourse assumptions, Algorithm~\ref{alg:decomposition_algorithm} converges in a finite number of steps for any given tolerance level $\eta > 0$.
\end{theorem}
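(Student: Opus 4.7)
The plan is to analyze Algorithm~\ref{alg:decomposition_algorithm} in three stages: a well-posedness check, a standard Benders monotonicity argument, and a compactness-and-contradiction argument that forces $\eta$-convergence in finitely many iterations. First, under both recourse assumptions, I would verify that every subproblem value $Z^{\textup{PDR}}_k(\hat{\bm x}, \hat{\theta})$ is finite and that strong duality in Proposition~\ref{prop5} gives $Z^{\textup{PDR}}_k(\hat{\bm x}, \hat{\theta}) = {Z^{\textup{PDR}}_k}^*(\hat{\bm x}, \hat{\theta})$. In particular, the dual subproblem \eqref{eq:benders_sp} is never unbounded, so the feasibility-cut branch \eqref{eq:benders_fea_cut} is never triggered and $\mathcal{W}^k$ remains empty throughout; only optimality cuts of the form \eqref{eq:benders_mp_cons1} are ever appended. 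I would also record the observation that the map $(\bm x, \theta) \mapsto Z^{\textup{PDR}}_k(\bm x, \theta)$ is a proper convex function (as the value of a conic program whose constraint data is affine in $(\bm x, \theta)$), and hence continuous on compact subsets of its effective domain.

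Second, I would establish the usual monotonicity of Benders bounds: the incumbent $\overline{J}^{\textup{PDR}}$ is non-increasing (Step~2 updates only upon improvement), while $\underline{J}^{\textup{PDR}}$ is non-decreasing because each added optimality cut restricts the master's feasible region. The key structural property is that the cut added at iteration $t$ is a valid supporting hyperplane of $Z^{\textup{PDR}}_k(\cdot,\cdot)$ at the incumbent $(\hat{\bm x}^t, \hat{\theta}^t)$: weak duality between \eqref{eq:pdr_ref2_z} and \eqref{eq:benders_sp} yields validity at all $(\bm x, \theta)$, and Proposition~\ref{prop5} yields equality at $(\hat{\bm x}^t, \hat{\theta}^t)$. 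Consequently, immediately after the cut is appended, evaluating the master's lower approximation at $(\hat{\bm x}^t, \hat{\theta}^t)$ reproduces $Z^{\textup{PDR}}_k(\hat{\bm x}^t, \hat{\theta}^t)$ for that partition.

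Third, I would argue finite $\eta$-convergence by contradiction. Assume the algorithm does not terminate; then it generates an infinite sequence $\{(\hat{\bm x}^t, \hat{\theta}^t)\}$ with gap persistently above $\eta\min(|\overline{J}^{\textup{PDR}}|, |\underline{J}^{\textup{PDR}}|)$. Using compactness of $\mathcal{X}$ (or its finiteness in the discrete case), together with a priori bounds on $\hat{\theta}^t$ obtained from the uniform boundedness of $Z(\bm x, \bm \xi)$ over the compact set $\mathcal{X} \times \Xi$ (a consequence of both recourse assumptions and Assumption~\ref{assum1}), I would extract a convergent subsequence $(\hat{\bm x}^{t_j}, \hat{\theta}^{t_j}) \to (\bm x^\infty, \theta^\infty)$. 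Combined with continuity of $Z^{\textup{PDR}}_k$ and the tightness of previously generated cuts at their generation points, a standard cut-accumulation argument in the spirit of Geoffrion's generalized Benders decomposition forces the master's piecewise-affine lower approximation at the limit point to coincide with $Z^{\textup{PDR}}_k(\bm x^\infty, \theta^\infty)$ for every $k$. The gap at the limit therefore vanishes, contradicting the persistent positive-gap assumption. The discrete case simplifies through pigeonhole: some $\hat{\bm x}$ is selected infinitely often, and the cuts accumulated at that fixed $\bm x$ close the gap through a sequence of piecewise-affine lower approximations converging to the convex function $\theta \mapsto Z^{\textup{PDR}}_k(\bm x, \theta)$.

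The principal obstacle is that the subproblem feasible region is a completely positive cone rather than a polyhedron, so the classical Benders finiteness argument based on enumerating finitely many extreme points and extreme rays is unavailable. The convexity-continuity-compactness route outlined above circumvents this, but it hinges on bounding the iterates $\hat{\theta}^t$ independently of $t$; here the sufficiently expensive recourse assumption is essential, since without it the CVaR epigraphical variable $\theta$ could drift to $-\infty$ and destroy compactness of the iterate sequence.
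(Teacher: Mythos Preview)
The paper's proof is far more condensed than yours: it simply verifies the hypotheses of Geoffrion's generalized Benders decomposition theory---namely, that each subproblem \eqref{eq:pdr_ref2_z} is feasible (complete recourse under LDR), bounded below (sufficiently expensive recourse), and admits a Slater point (proof of Proposition~\ref{prop5})---and then directly invokes \cite[Theorems~2.4 and~2.5]{GBD1} to conclude finite $\eta$-convergence. Your proposal instead unpacks that citation and reconstructs the convergence argument from scratch via monotonicity, supporting hyperplanes, and a compactness--contradiction step. This is the same underlying machinery, made explicit rather than delegated; what you gain is self-containment and a clearer view of where each assumption is used, at the cost of having to justify steps the paper offloads to Geoffrion.

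Your reconstruction is largely sound and you correctly flag two subtleties the paper glosses over: the non-polyhedral (completely positive) dual feasible region, which rules out the classical extreme-point enumeration route, and the need to confine the iterates $\hat\theta^t$ to a compact set. On the latter point, however, your justification is thin. The claim that a priori bounds on $\hat\theta^t$ follow from uniform boundedness of $Z(\bm x,\bm\xi)$ over $\mathcal X\times\Xi$ is not immediate: $\hat\theta^t$ is the minimizer of the \emph{relaxed} master at iteration~$t$, and that relaxation need not inherit control over $\theta$ from the true second-stage values until enough cuts have accumulated (indeed, with $\mathcal V^k=\mathcal W^k=\emptyset$ the master may be unbounded in $\theta$). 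A fully self-contained argument along your lines would need either an explicit demonstration that one round of optimality cuts already traps $\theta$ in a compact interval---this is plausible, since each cut has the form $s_k\geq a_k(\bm x)-b_k\theta$ with $b_k=\mathbf e_{S+1}^\top\bm H_{L+2}^k\mathbf e_{S+1}\geq 0$---or a direct verification that Geoffrion's hypotheses apply to the complicating pair $(\bm x,\theta)$ rather than to $\bm x$ alone. The paper sidesteps all of this by citing the theorem as a black box, but that citation carries the same burden implicitly.
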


\begin{proof}
The decomposition algorithm is analogous to the generalized Benders decomposition algorithm \cite{GBD1}. 
When both complete recourse under linear decision rules and sufficiently expensive recourse assumptions are satisfied,  the subproblem \eqref{eq:benders_sp} for each partition is feasible and bounded for any fixed first-stage decisions $(\hat{\bm x}, \hat{\theta})$, i.e., $-\infty < {Z^{\textup{PDR}}_k}^* \left(\hat{\bm x}, \hat{\theta} \right)< \infty$. Furthermore, according to the proof of Proposition \ref{prop5}, there exists a strictly feasible solution (or a Slater point) to each subproblem \eqref{eq:pdr_ref2_z}. 
Then, based on the finite $\eta$-convergence of the generalized Benders decomposition (\cite[Theorems 2.4 and 2.5]{GBD1}), we can conclude that the PDR problem $\eqref{eq:pdr_ref}$ is guaranteed to converge to the global optimum for any given tolerance level $\eta > 0$ in a finite number of iterations.
\end{proof}
For the special case when the first-stage decision $\bm x$ is integer, the benefit of the decomposition algorithm is significant because it breaks up the complexity of the original problem, which constitutes a mixed-integer COP. 
Indeed, the master problem is now a more tractable mixed-integer second-order cone program, for which off-the-shelf solvers are applicable. 
Furthermore, as the feasible set of the first-stage decision variables is finite,  the decomposition algorithm solves the problem to optimality in a finite number of iterations (\cite[Theorem 2.4]{GBD1}). 

We remark that generalized copositive programs and generalized completely positive programs are intractable \cite{copositive-program}. To develop an efficient solution for the copositive programming problem \eqref{eq:pdr_ref}, we replace the copositive cones $\mathcal{C}(\mathcal K_k), \ k \in [K]$, with semidefinite-representable inner approximations $\mathcal{IA}_0(\mathcal K_k)$ and $\mathcal{IA}_1(\mathcal K_k)$ presented in \eqref{eq:sdp1} and \eqref{eq:sdp2} in Appendix~B. In this way, we obtain conservative solutions by solving tractable semidefinite programs. Similarly, we derive the corresponding semidefinite approximations \eqref{eq:sdp1-dual} and~\eqref{eq:sdp2-dual} in Appendix B for the completely positive programs \eqref{eq:benders_sp} and \eqref{eq:benders_fea_cut} in the decomposition algorithm. In Appendix B, we further provide detailed comparisons of different semidefinite approximations for problems \eqref{eq:pdr_ref}, \eqref{eq:benders_sp}, and $\eqref{eq:benders_fea_cut}$, and the corresponding optimal objective values of the semidefinite reformulations.

\section{Numerical Experiments}
\label{experiments}

In this section, we demonstrate the effectiveness of our copositive programming approach to two-stage DRO problems over three applications. The first experiment is on  network inventory allocation problem, which uses expectation as the risk measure and has random recourse in the objective. The second experiment, multi-item newsvendor, and the third experiment, medical scheduling, have CVaR as risk measure, where their expectation reformulations contain random recourse in the constraints. In Appendix~C, we conduct another experiment on facility location problem, which is a two-stage optimization problem with binary first-stage decision variable and random recourse in both the objective and the constraints. 

We compare different methods by evaluating their 
out-of-sample performances and computation times. The various data-driven methods include:

$\bm \cdot$ \textbf{$\mathbf{C_0}$ SDP} - The semidefinite approximation using the cone $\mathcal{IA}_0\mathcal{(K)}$ in  \eqref{eq:sdp1} for the copositive reformulation \eqref{eq:pdr_ref}, where the ambiguity set is defined in $\eqref{eq:ambiguity_set}$ with two layers of robustness. 

$\bm \cdot$ \textbf{$\mathbf{C_1}$ SDP} - The semidefinite approximation using the cone $\mathcal{IA}_1\mathcal{(K)}$   in \eqref{eq:sdp2}  for the copositive reformulation \eqref{eq:pdr_ref}, where the ambiguity set is defined in $\eqref{eq:ambiguity_set}$ with two layers of robustness.

$\bm \cdot$ \textbf{Wass SDP} - The semidefinite approximation for the copositive reformulation of two-stage DRO with the Wasserstein ambiguity set, proposed by Hanasusanto and Kuhn \cite{Wass-SDP}. We only apply this method in the first experiment because this scheme is only applicable for solving two-stage DRO problems with random recourse in the objective.

$\bm \cdot$ \textbf{SAA} - The sample average approximation.\\
All optimization problems are solved using MOSEK 9.2.28 via the YALMIP interface on a 10-core 2.8GHz to 5.2 GHz Windows PC with 32 GB RAM.

\subsection{Network inventory allocation}
\subsubsection{Problem Description} 
We consider a two-stage capacitated network inventory problem with $M$ locations where one determines the stock allocations $x_i,\ i \in [M]$, before knowing the realizations of demands $u_i$, $i\in [M]$, and per-unit transportation costs $v_{ij}$ from location $i$ to $j$, $i,j\in[M]$. The unit cost of buying stock in advance at location~$i$ is $c_i$. The demand $u_i$ in location $i$ can be satisfied by existing stock $x_i$ and by transporting $y_{ji}$ units from location $j$. The stock capacity at each location is $T$ units. The network inventory allocation problem minimizes the worst-case storage  and transportation cost given by
\begin{equation*}
\begin{array}{clll}
  \displaystyle  \text{min} & \displaystyle \bm c^\top \bm x + \sup_{\PP\in\mathcal P}\EE_\PP [Z(\bm x, \bm u, \bm v)] \\
  \st & \bm x \in \mathbb{R^M}, \ 0 \leq x_i \leq T \quad \forall  i \in [M], \\ 
\end{array}
\end{equation*}
where the second-stage cost is given by
\begin{equation*}
\begin{array}{ccll}
  \displaystyle Z(\bm x, \bm u, \bm v) \coloneqq & \displaystyle \text{min} & \displaystyle \sum_{i \in [M]} \sum_{j \in [M]} v_{ij}y_{ij} \\
  & \st & \bm y \in \mathbb{R_{+}^{M \times M}} \\
   &  & x_i +  \underset{j \in [M]}{\sum} y_{ji} - \underset{j \in [M]}{\sum} y_{ij} \geq u_i \quad \forall  i \in [M]. \\ 
\end{array}
\end{equation*}

In this problem, we assume the only information provided is the historical observations of the random parameters, $(\hat{\bm u}_1, \hat{\bm v}_1),\ldots,(\hat{\bm u}_N, \hat{\bm v}_N)$, and the description of the support set $ \Xi= \{ (\bm u, \bm v) \in (\RR^{M},\RR^{M \times M}) :\underline{u}_l \leq u_i \leq \overline{u}_u, \;\underline{v}_l \leq v_{ij} \leq \overline{v}_u,\ i,j \in [M]\} $.

\subsubsection{Experiments}
Since the problem has random recourse in the objective and fixed recourse in the constraints, we apply PLDR introduced in Section \ref{sec:PLDR} to the second-stage decision variable $\bm y(\bm u, \bm v)$. We further approximate the copositive problem as semidefinite programs using the cone $\mathcal{IA}_0\mathcal{(K)}$ and the cone $\mathcal{IA}_1\mathcal{(K)}$ separately, denoted as \textbf{$\mathbf{C_0}$ SDP} and \textbf{$\mathbf{C_1}$ SDP}. Here we set the number of partitions $K$  to the size of historical dataset $N$. Aside from \textbf{$\mathbf{C_0}$ SDP} and \textbf{$\mathbf{C_1}$ SDP}, we also implement the simplified SDP solution schemes that use empirical partition probabilities by setting the robustness parameter $\gamma$ in the uncertainty set \eqref{eq:ambiguity_set-prob} to $0$, given by \textbf{$\mathbf{\hat{C}_0}$ SDP} and \textbf{$\mathbf{\hat{C}_1}$ SDP}. We compare these SDP solution schemes with \textbf{Wass SDP} and \textbf{SAA}. Furthermore, we apply Algorithm~\ref{alg:decomposition_algorithm} and show the advantages of using the decomposition algorithm to accelerate the solution process.
To calculate the runtime of the decomposition algorithm, we choose the maximum among the runtimes of solving the $K$ subproblems and add that to the total time of running the algorithm. We note, however,  that the decomposition algorithm can naturally be implemented in parallel computing environments. 

We evaluate the performances of the above solution schemes by conducting out-of-sample experiments with $N = 10, 20, 40, 80, 160$ independent training samples and 50,000 independent testing samples. The results of all experiments are averaged over 100 random instances. For each instance, we generate a network of size $M=5$ with stock costs $\bm c = [40,50,60,70,80]$. We set $T = 80, \ \underline{u}_l = 20, \ \overline{u}_u = 40,\  \underline{v}_l =~40, \ \overline{v}_u =~50$. We assume the the true distributions of the demands $\bm u$ are truncated lognormal with means $\bm\mu_1 = [3,3,3.5,3.5,3.5]$ and standard deviations $\bm \sigma_1 = 0.2 \mathbf{e}$. The true distributions of the per-unit transportation costs $\bm v$ are also assumed to be truncated lognormal with  means $\bm\mu_2 = 3 \mathbf{e} $ and standard deviations $\bm \sigma_2 = 0.1\mathbf{e}$. For the robustness parameters $\epsilon_k, \ k \in [K]$, in the ambiguity set, we use 2-fold cross validation to determine the values; for the other robustness parameter $\gamma$, we set it according to the theoretical value where $\rho_2 = 0.1$.

\subsubsection{Results Analysis}

\begin{table}[t!]
\color{black}
\centering
\begin{tabular}{|c|rrrrrr|}\hline
$N$  & \textbf{$\mathbf{C_0}$ SDP} & \textbf{$\mathbf{C_1}$ SDP} & \textbf{$\mathbf{\hat{C}_0}$ SDP} & \textbf{$\mathbf{\hat{C}_1}$ SDP} &  \textbf{Wass SDP}   & \textbf{SAA} \\
[0.0mm] \hline
$10$  & 100\% & 100\% & 100\% & 100\% & 100\% & 0.5\% \\
$20$  & 100\% & 100\% & 100\% & 100\% & 100\% & 0.4\% \\
$40$  & 100\% & 100\% & 100\% & 100\% & 100\% & 0.8\% \\ 
$80$  & 100\% & 100\% & 100\% & 100\% & 100\% & 0.7\% \\
$160$ & 100\% & 100\% & 100\% & 100\% & 100\% & 3.9\% \\
\hline\hline
\end{tabular}
\caption[]{The average percent of realization where the method's first-stage decision is feasible over 10,000 trials.}
\label{tbl:inventory1}
\end{table}

Table~\ref{tbl:inventory1} shows the proportions of realizations where the first-stage decision $\bm x$ is feasible in the out-of-sample tests. 
The results show that as expected all the SDP solution schemes are guaranteed to produce feasible first-stage decisions when evaluating the out-of-sample performance. On the other hand,  SAA exhibits poor feasibility performance. 
As such, we no longer consider its out-of-sample performance and runtime in subsequent comparisons.

\begin{table}[t!]
\color{black}
\centering
\begin{tabular}{|c|rrrrr|}\hline
$N$  & \textbf{$\mathbf{C_0}$ SDP} & \textbf{$\mathbf{C_1}$ SDP}  & \textbf{$\mathbf{\hat{C}_0}$ SDP} & \textbf{$\mathbf{\hat{C}_1}$ SDP} &  \textbf{Wass SDP}  \\
[0.0mm] \hline
$10$  & 11352 & 11352 & 11388 & 11388 & 11516 \\
$20$  & 11325 & 11325 & 11365 & 11365 & $\backslash$ \\
$40$ & 11306 & $\backslash$ & 11340 & $\backslash$ & $\backslash$ \\ 
$80$ & 11297 & $\backslash$ & 11297 & $\backslash$ & $\backslash$ \\
$160$ & 11272 & $\backslash$ & 11248 & $\backslash$ & $\backslash$ \\
\hline\hline
\end{tabular}
\caption[]{The average out-of-sample cost of different approaches over 100 trials.}
\label{tbl:inventory2}
\end{table}

Table~\ref{tbl:inventory2} compares the out-of-sample performances of different approaches. We assume the method is applicable only if its runtime is less than 900 seconds. We observe that the out-of-sample costs of \textbf{$\mathbf{C_0}$ SDP}, \textbf{$\mathbf{C_1}$ SDP}, \textbf{$\mathbf{\hat{C}_0}$ SDP} and \textbf{$\mathbf{\hat{C}_1}$ SDP} decrease as the number of in-sample points increases. Semidefinite approximations using the inner cones $\mathcal{IA}_0\mathcal{(K)}$ and  $\mathcal{IA}_1\mathcal{(K)}$ perform almost the same  for instances with small  in-sample data sizes. Additionally, we observe that restricting the uncertainty set $\Delta$ to contain only the empirical partition probabilities $\hat{\bm p}$ does not sacrifice the performances of \textbf{$\mathbf{\hat{C}_0}$ SDP} and \textbf{$\mathbf{\hat{C}_1}$ SDP}. As such, in the following experiments, we will directly implement the simpler SDP schemes by setting the robustness parameter to $\gamma = 0$ for both approximation schemes. The results in Table~\ref{tbl:inventory2} further show that \textbf{Wass SDP} does not work well for this problem.

\begin{table}[t!]
\color{black}
\centering
\begin{tabular}{|c|rrrrrrr|}\hline
$N$   & \textbf{$\mathbf{C_0}$ SDP} & \textbf{$\mathbf{C_1}$ SDP}  & \textbf{$\mathbf{\hat{C}_0}$ SDP} & \textbf{$\mathbf{\hat{C}_1}$ SDP}  & \textbf{Benders $\mathbf{C_0}$} & \textbf{Benders $\mathbf{\hat{C}_0}$} &  \textbf{Wass SDP} \\
[0.0mm] \hline
$10$  & 3.7275 & 16.4684 & 4.0807 & 13.9272 & 5.2203 & 5.2326 & 737.8000 \\
$20$  & 9.7430 & 76.2147 & 9.3361 & 81.9130 & 6.1783 & 5.7682 & $\backslash$ \\
$40$ & 28.0969 & $\backslash$ & 29.5526 & $\backslash$ & 7.2758 & 7.2570 & $\backslash$ \\ 
$80$ & 97.2413 & $\backslash$ & 67.0260 & $\backslash$  & 10.5836 & 9.9920 & $\backslash$ \\
$160$ & 351.9841 & $\backslash$ & 282.4895 & $\backslash$ & 16.5239 & 15.0714 & $\backslash$ \\
\hline\hline
\end{tabular}
\caption[]{The average runtime of different approaches over 100 trials.
}
\label{tbl:inventory3}
\end{table}

With regard to computational costs,  \textbf{$\mathbf{C_0}$ SDP}, \textbf{$\mathbf{C_1}$ SDP}, \textbf{$\mathbf{\hat{C}_0}$ SDP} and \textbf{$\mathbf{\hat{C}_1}$ SDP} have notably less runtime than \textbf{Wass SDP} as shown in Table~\ref{tbl:inventory3}. One possible reason is that \textbf{Wass SDP}  solves a large SDP problem with $O(N)$ semidefinite constraints, in which each constraint involves $O((S+N_2+L)^2)$ variables. In comparison, \textbf{$\mathbf{C_0}$ SDP}, \textbf{$\mathbf{C_1}$ SDP}, \textbf{$\mathbf{\hat{C}_0}$ SDP} and \textbf{$\mathbf{\hat{C}_1}$ SDP} solve a SDP problem with $O(K)$ semidefinite constraints, where each constraint involves $O(S^2)$ variables only. However, the SDP solution schemes become less time-efficient as the number of partitions increases. It would thus be advantageous to leverage Algorithm~\ref{alg:decomposition_algorithm} which solves the subproblems in parallel. Because \textbf{$\mathbf{C_1}$ SDP} and \textbf{$\mathbf{\hat{C}_1}$ SDP} take much longer time to solve but deliver similar out-of-sample performances to \textbf{$\mathbf{C_0}$ SDP} and \textbf{$\mathbf{\hat{C}_0}$ SDP}, we conclude \textbf{$\mathbf{C_0}$ SDP} and \textbf{$\mathbf{\hat{C}_0}$ SDP} are a better choice when balancing optimality and scalability. Thus, we only apply the decomposition algorithm to solve \textbf{$\mathbf{C_0}$ SDP} and \textbf{$\mathbf{\hat{C}_0}$ SDP}, denoted as \textbf{Benders $\mathbf{C_0}$} and \textbf{Benders $\mathbf{\hat{C}_0}$}. We set the tolerance level $\eta = 0.05$ when solving the decomposition algorithm. Numerically this algorithm significantly reduces runtime compared to the original SDP schemes particularly when the number of in-sample points is large.

\subsection{Multi-item newsvendor}

\subsubsection{Problem Description} 

In this experiment, we explore an inventory management problem where we want to decide the order quantities $x_i$, $i \in [M]$, of $M$ products, before knowing the random demands $\xi_i$, $i \in [M]$, and the random stockout costs $s_i$, $i \in [M]$. There is a limit $B$ of the total order quantity. We incur a per-unit stockout cost $s_i$ when the order quantity $x_i$ is less than the demand $\xi_i$ and a holding cost $g_i$ if the order quantity $x_i$ is more than the demand $\xi_i$. The second-stage variable $y_{1,i}\coloneqq\max\{x_i-\xi_i,0\}$ corresponds to the excess amount, while  $y_{2,i}\coloneqq\max\{\xi_i-x_i,0\}$ corresponds to the shortfall amount. 
Our goal is to minimize the worst-case CVaR of the total stockout and holding costs:
\begin{equation*}
\begin{array}{clll}
  \displaystyle \text{min} & \displaystyle \sup_{\PP\in\mathcal P} \PP\text{-CVaR}_\epsilon[Z(\bm x, \bm \xi, \bm s)] \\
  \st & \bm x \in \mathbb{R}^M_{+}, \ \mathbf{e}^\top \bm x \leq B, \\
\end{array}
\end{equation*}
where the second-stage cost is defined as
\begin{equation*}
\begin{array}{ccll}
  \displaystyle Z(\bm x, \bm \xi, \bm s) \coloneqq & \displaystyle \text{inf} & \displaystyle \bm g^\top \bm y_1  +\bm s^\top \bm y_2 \\
   & \st &\bm y_1 \in \mathbb{R}^M_{+},\ \bm y_2 \in \mathbb{R}^M_{+}\\
   & & \bm y_1 \geq \bm x - \bm \xi, \ \bm y_2 \geq \bm \xi- \bm x.
\end{array}
\end{equation*}
According to the definition of CVaR, it can be shown that this problem is equivalent to
\begin{equation}
\label{exp:newsvendor2}
\begin{array}{clll}
   \displaystyle \text{inf} & \displaystyle \theta + \frac{1}{\delta} \sup_{\PP\in\mathcal P}\EE_\PP [\tau(\bm \xi,\bm s)]  \\
   \st &\theta\in \mathbb{R}, \ \bm x \in \mathbb{R}^M_{+}, \ \bm y_1: \Xi \rightarrow \RR^{M}, \ \bm y_2: \Xi \rightarrow \RR^{M}, \ \tau: \Xi \rightarrow \RR\\
   & \mathbf{e}^\top \bm x \leq B \\
   & \left.\begin{aligned}
   & \tau(\bm \xi, \bm s) \geq 0, \ \tau(\bm \xi, \bm s) \geq \bm g^\top\bm y_1(\bm \xi,\bm s) +\bm s^\top \bm y_2(\bm \xi,\bm s) -  \theta \\
   & \bm y_1(\bm \xi,\bm s) \geq \bm 0, \ \bm y_1(\bm \xi,\bm s) \geq \bm x - \bm \xi \\
   & \bm y_2(\bm \xi,\bm s) \geq \bm 0,\ \bm y_2(\bm \xi,\bm s) \geq \bm \xi- \bm x
\end{aligned}\right\}  \forall(\bm \xi, \bm s) \in \Xi.
\end{array}
\end{equation}
We assume the only information provided is the historical data, $(\hat{\bm \xi}_1, \hat{\bm s}_1),\ldots,(\hat{\bm \xi}_N, \hat{\bm s}_N)$, and the description of the support set  $ \Xi= \{ (\bm \xi, \bm s) \in (\RR^{M},\RR^{M}) :\underline{\xi}_l \leq \xi_i \leq \overline{\xi}_u, \ \underline{s}_l \leq s_{i} \leq \overline{s}_u,\ i \in [M]\}$. 

\subsubsection{Experiments}
The reformulated problem \eqref{exp:newsvendor2} has random recourse in the constraints. After applying PLDR to the second-stage decision variables $\bm y_1(\bm \xi,\bm s)$ and $\bm y_2(\bm \xi,\bm s)$, the right-hand side of the third inequality contains a quadratic term of random parameters $(\bm \xi, \bm s)$. Thus, we apply PQDR to the second-stage decision variable $\tau(\bm \xi, \bm s)$ on the left-hand side. After applying the decision rules, we separately use the inner approximations $\mathcal{IA}_0\mathcal{(K)}$ and $\mathcal{IA}_1\mathcal{(K)}$ proposed in Appendix B and solve the corresponding semidefinite programs denoted as \textbf{$\mathbf{C_0}$ SDP} and \textbf{$\mathbf{C_1}$ SDP}. Here, we further take advantage of the decomposition algorithm to reduce the runtimes of the semidefinite programs. 

In order to assess the performances of \textbf{$\mathbf{C_0}$ SDP}, \textbf{$\mathbf{C_1}$ SDP} and \textbf{SAA}, we compute the out-of-sample performance of the multi-item newsvendor problem  with $M = 5$ products. The training datasets are of sizes $N = 10, 20, 40, 80, 160$, and testing datasets contain 50,000 independent samples. We evaluate the out-of-sample performance by averaging over 100 trials. For each trial, we set $B = 30, \ \underline{\xi}_l = 0, \ \overline{\xi}_u =~10, \ \underline{s}_l =~0, \ \overline{s}_u = 50$ and the holding cost vector $\bm g = [5,6,7,8,9]$. The parameter $\delta$ related to the risk attitude of CVaR is fixed to~0.1. 
The true distributions of the demands $\bm \xi$ are assumed to be truncated lognormal with means $\bm \mu_1 = \mathbf{e}$ and standard deviations $\bm \sigma_1 = \mathbf e$, and the true distributions of the stockout costs $\bm s$ are assumed to be truncated lognormal with means $\bm \mu_2 = 3 \mathbf{e}$ and standard deviations $\bm \sigma_2 = 2 \mathbf{e}$. We determine the values of the robustness parameters~$\epsilon_k$, $k \in [K]$, in the ambiguity set using a 2-fold cross validation procedure. The robustness parameter $\gamma$ for the $\chi^2$ uncertainty set is set to $0$. The tolerance level $\eta$  of the decomposition algorithm is set to  $0.05$.

\subsubsection{Results Analysis}

\begin{figure}[t!]
		\includegraphics[width=0.5\linewidth]{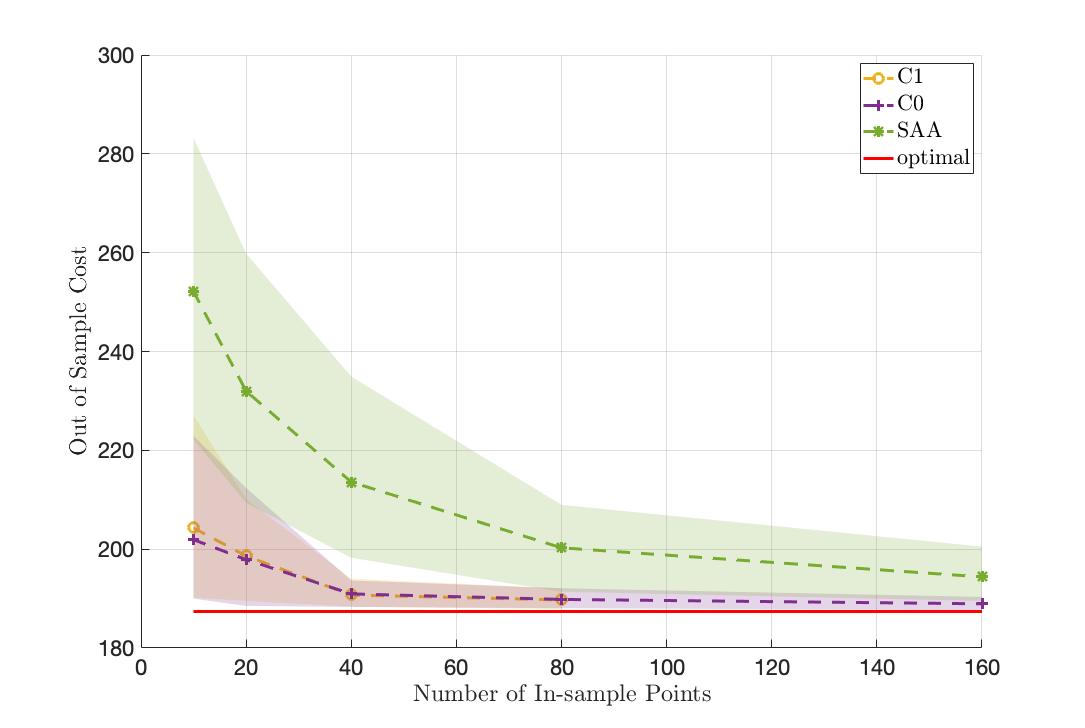}
		\caption{The out-of-sample cost  as a function of the number of in-sample data points. The dashed lines represent the average values of the out-of-sample cost, and the shaded areas visualize the $10\%$ to $90\%$ quantile range over 100 trials.}
		    \label{fig:newsvendor_performance}
\end{figure}
 	
\begin{table}[t!]
\color{black}
\centering
\begin{tabular}{|c|rrrr|}\hline
$N$  & \textbf{$\mathbf{C_1}$ SDP}  & \textbf{$\mathbf{C_0}$ SDP} &  \textbf{Benders $\mathbf{C_0}$} & \textbf{SAA} \\
[0.0mm] \hline
$10$  & 2.0477 & 0.2490 & 0.5769 & 0.0033\\
$20$  & 10.7969 & 0.6176 & 0.6913 & 0.0041\\
$40$ &  62.6185 & 1.9554 & 0.6843 & 0.0068\\ 
$80$ &  549.9155 & 7.7068 & 1.0081 & 0.0135\\
$160$ & $\backslash$ & 41.8532 & 1.7745 & 0.0236\\
\hline\hline
\end{tabular}
\caption[]{The average runtime of different approaches over 100 trials.
}
\label{tbl:newsvendor_runtime}
\end{table}

We compare the out-of-sample performances of \textbf{$\mathbf{C_0}$ SDP}, \textbf{$\mathbf{C_1}$ SDP} and \textbf{SAA} as shown in Figure~\ref{fig:newsvendor_performance}. Here we assume the method is applicable only if its runtime is less than 900 seconds. In Figure~\ref{fig:newsvendor_performance}, we observe that the out-of-sample costs of all three methods decrease with the  training data size and approach the optimal cost. 
Both \textbf{$\mathbf{C_0}$ SDP} and \textbf{$\mathbf{C_1}$ SDP} achieve better out-of-sample performances with approximately $20\%$ improvement compared to \textbf{SAA} when the in-sample data is limited. Comparing the two SDP schemes, we notice that \textbf{$\mathbf{C_0}$ SDP} has a marginal advantage over \textbf{$\mathbf{C_1}$ SDP} in terms of out-of-sample performance.

Table~\ref{tbl:newsvendor_runtime} shows the runtimes of all three methods above, with the addition of \textbf{Benders $\mathbf{C_0}$} which solves \textbf{$\mathbf{C_0}$ SDP} using the decomposition algorithm. Our results confirm the benefit of applying the decomposition algorithm \textbf{Benders $\mathbf{C_0}$}. This advantage grows with the number of in-sample data. 

 	\begin{figure}[t!]
 		\begin{center}
		\begin{subfigure}[b]{.5\textwidth}
			\includegraphics[width=1\linewidth]{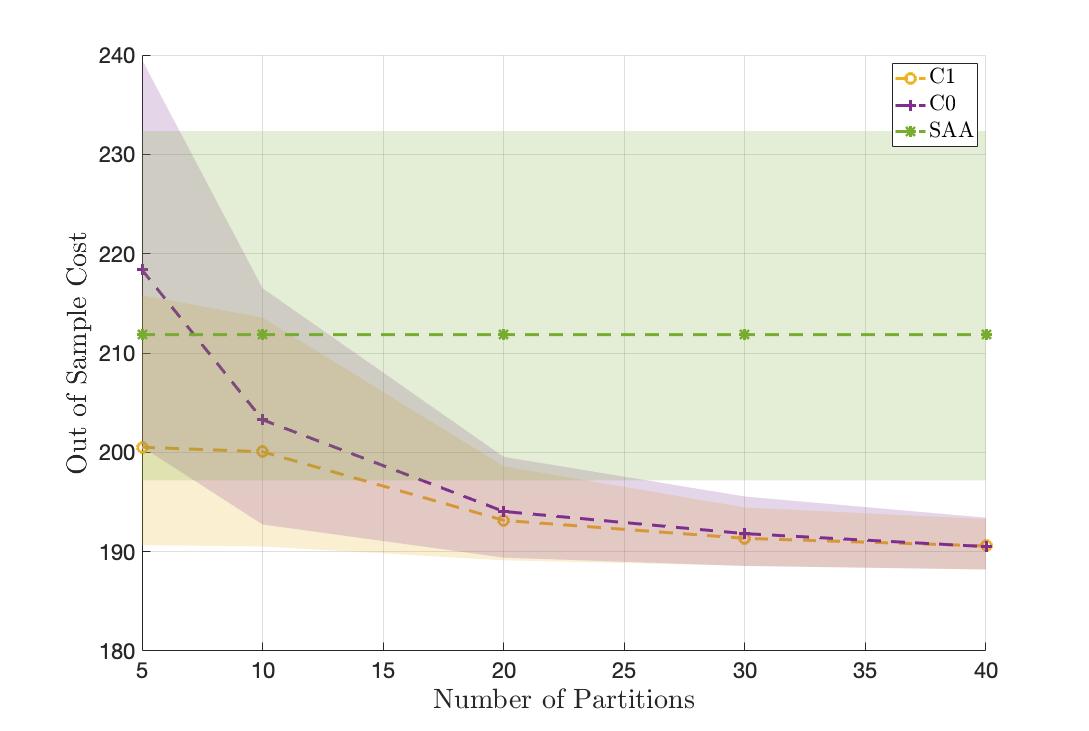}
		\end{subfigure}\hfill%
		\begin{subfigure}[b]{.5\textwidth}
			\includegraphics[width=1\linewidth]{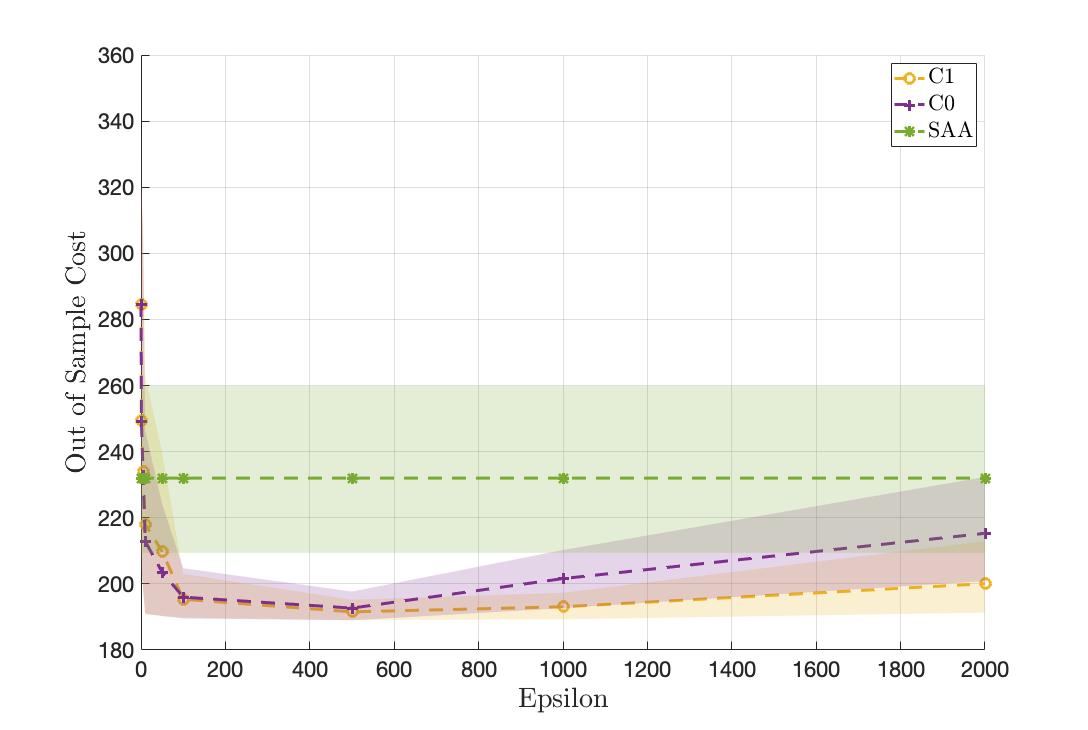}
		\end{subfigure}\hfill%
		\vspace{-3mm}
		\caption{The out-of-sample cost as a function of the number of partitions $K$ (left) with fixed in-sample data size $N = 40$, where values of epsilon are defined using cross validation. And the out-of-sample cost as a function of the value of epsilon $\epsilon_k$ (right) with fixed in-sample size and fixed number of partitions $K = N = 20$.}
		    \label{fig:newsvendor}
 		\end{center}
 	\end{figure}

When using piecewise decision rules, the number of partitions $K$ also affects the out-of-sample performance. In Figure~\ref{fig:newsvendor} (left), the out-of-sample performance improves as number of partitions increases and reaches the minimum at the point $K = N = 40$. In addition, we can see with fixed number of samples, the out-of-sample performance of \textbf{$\mathbf{C_1}$ SDP} is better than \textbf{$\mathbf{C_0}$ SDP}, which indicates $\mathcal{IA}_1\mathcal{(K)}$ provides a tighter approximation, particularly with small number of partitions $K$.
Figure~\ref{fig:newsvendor} (right) shows that varied values of epsilon provide us with different out-of-sample performances. The optimal value of epsilon under this setting is achieved in the range from 400 to 600.

\subsection{ Medical scheduling}

\subsubsection{Problem Description} 

In the medical scheduling problem, we aim to schedule the physician's daily appointments of $M$ patients in a clinic, who arrive in ascending order and may have to wait to be served. We must decide the appointment length~$x_i$ allocated to the $i^{\textup{th}}$ patient, before the realization of the unknown actual length $\xi_i$ and patient $i$'s per-unit time waiting cost in the queue $\pi_i$. The ascending arrival order means that the ${(i+1)}^{\textup{th}}$ patient's appointment is scheduled to start at time $\sum_{k=1}^{i} x_k$. The last appointment must be scheduled within the physician's regular working time~$T$, i.e., \ $\sum_{i=1}^{M} x_i \leq T$.

After the realizations of all patients' actual appointment lengths and their individual cost of waiting in the queue, we can calculate the second-stage decision variables corresponding to the waiting times of patients $y_i \geq 0$, $i \in [M]$, and the physician's total overtime $y_{M+1} \geq 0$. The following recursive formula describes the rules of waiting times $\bm y$:
$$y_{i+1} \coloneqq \max \{0, \ y_i + \xi_i - x_i\}, \ i \in [M]. $$
Here, we assume the first appointment starts at time 0, i.e., $y_1 = 0$.

Each patient's per-unit-time waiting cost $\pi_i$ is random, and the cost due to the overtime of the physician is deterministic and set to be $c$ per unit time. The goal is to schedule appointments with the minimum total waiting costs and overtime cost evaluated by worst-case CVaR:

\begin{equation*}
\begin{array}{clll}
  \displaystyle \text{min} & \displaystyle \sup_{\PP\in\mathcal P} \PP\text{-CVaR}_\epsilon[Z(\bm x, \bm \xi, \bm \pi)] \\
  \st & \bm x \in \mathbb{R}^M_{+},\ \mathbf{e}^\top \bm x \leq T, \\
\end{array}
\end{equation*}
where  
\begin{equation*}
\begin{array}{ccll}
  \displaystyle Z(\bm x, \bm \xi, \bm \pi) \coloneqq & \displaystyle \text{inf} & \displaystyle \sum_{i=1}^{M} \pi_i y_i + c y_{M+1}\\
   & \st &\bm y \in \mathbb{R}^{M+1}_{+} \\
   & & y_{i+1} \geq y_i + \xi_i - x_i \quad \forall i \in [M]. \\
\end{array}
\end{equation*}

The above optimization problem with CVaR can be reformulated as
\begin{equation}
\label{exp:MS2}
\begin{array}{clll}
   \displaystyle \text{inf} & \displaystyle \theta + \frac{1}{\delta} \sup_{\PP\in\mathcal P}\EE_\PP [\tau(\bm \xi,\bm \pi)]  \\
   \st &\theta\in \mathbb{R},\;\bm x \in \mathbb{R}^M_{+},\;\bm y: \Xi \rightarrow \RR^{M+1}, \tau: \Xi \rightarrow \RR\\
   & \mathbf{e}^\top \bm x \leq T \\
   & \left.\begin{aligned}
   & \tau(\bm \xi, \bm \pi) \geq 0, \ \tau(\bm \xi, \bm \pi) \geq \sum_{i=1}^{M} \pi_i y_i(\bm \xi, \bm \pi) + c y_{M+1}(\bm \xi, \bm \pi) - \theta \\
   & \bm y(\bm \xi, \bm \pi) \geq \bm 0, \ y_{i+1}(\bm \xi, \bm \pi) \geq y_i(\bm \xi, \bm \pi)+ \xi_i - x_i \quad \forall i \in [M] \\
\end{aligned} \right\} & \forall(\bm \xi, \bm \pi) \in \Xi.
\end{array}
\end{equation}
Here we also assume that the only information provided is the historical data of the uncertain parameters, $(\hat{\bm \xi}_1, \hat{\bm \pi}_1),\dots,(\hat{\bm \xi}_N, \hat{\bm \pi}_N)$, and the description of the support as a bounded set $ \Xi= \{ (\bm \xi, \bm \pi) \in (\RR^{M},\RR^{M}) :\linebreak \underline{\xi}_l \leq \xi_i \leq \overline{\xi}_u,\ \underline{\pi}_l \leq \pi_{i} \leq \overline{\pi}_u,\ i \in [M]\} $.

\subsubsection{Experiments}

Here we apply PQDR to the second-stage decision variable $\tau(\bm \xi, \bm \pi)$ and PLDR to the second-stage decision variables $\bm y_i(\bm \xi, \bm \pi),\ i \in [M+1]$, since we have a random recourse term $\pi_i y_i(\bm \xi, \bm \pi)$ in the constraints. After we reformulate it as a copositive program, the approximation schemes proposed in Appendix B can be applied and the above problem $\eqref{exp:MS2}$ can be reformulated as the SDP problems $\textbf{$\mathbf{C_0}$ SDP}$ and $\textbf{$\mathbf{C_1}$ SDP}$. We compare $\textbf{$\mathbf{C_0}$ SDP}$ and $\textbf{$\mathbf{C_1}$ SDP}$ with $\textbf{SAA}$ in terms of out-of-sample performance, and further apply the decomposition algorithm \textbf{Benders $\mathbf{C_0}$} to evaluate its effectiveness in reducing computation effort when solving the SDP problem.
 	
We consider the case where a physician has a schedule with $M=8$ patients and overtime cost $c=200$. To evaluate the out-of-sample performance, we calculate the average cost over 100 instances, and for each instance we generate independent training data points $(\hat{\bm \xi}_1, \hat{\bm \pi}_1),\ldots,(\hat{\bm \xi}_N, \hat{\bm \pi}_N)$ with sizes $N = 10,20,40,80,160$ and independent testing data with size 50,000. The true distributions of the actual appointment lengths $\bm \xi$ are assumed to be truncated lognormal with means $\bm \mu_1 = 4 \mathbf{e}$ and standard deviations $\bm \sigma_1 = 0.5\mathbf{e}$, and the per-unit-time waiting costs $\bm \pi$ are assumed to have the truncated lognormal distributions with means $\bm \mu_2 = \mathbf{e}$ and standard deviations $\bm \sigma_2 = 0.5\mathbf{e}$. Here we describe the support set as $\underline{\xi}_l = 20, \ \overline{\xi}_u = 100, \ \underline{\pi}_l = 1, \ \overline{\pi}_u = 10$, and set the physician’s total regular working time to $T = (\underline{\xi}_l+\overline{\xi}_u)/2 \times M = 480$. The risk attitude parameter~$\delta$ is set to be 0.1. We set the parameters $\epsilon_k, \  k \in [K]$, defined in the ambiguity set according to a 2-fold cross validation procedure and robustness parameter $\gamma$ to  0. We have the tolerance level of the decomposition algorithm set to be $\eta = 0.05$.

\subsubsection{Results Analysis}
 
  	\begin{figure}[t!]
		\includegraphics[width=0.5\linewidth]{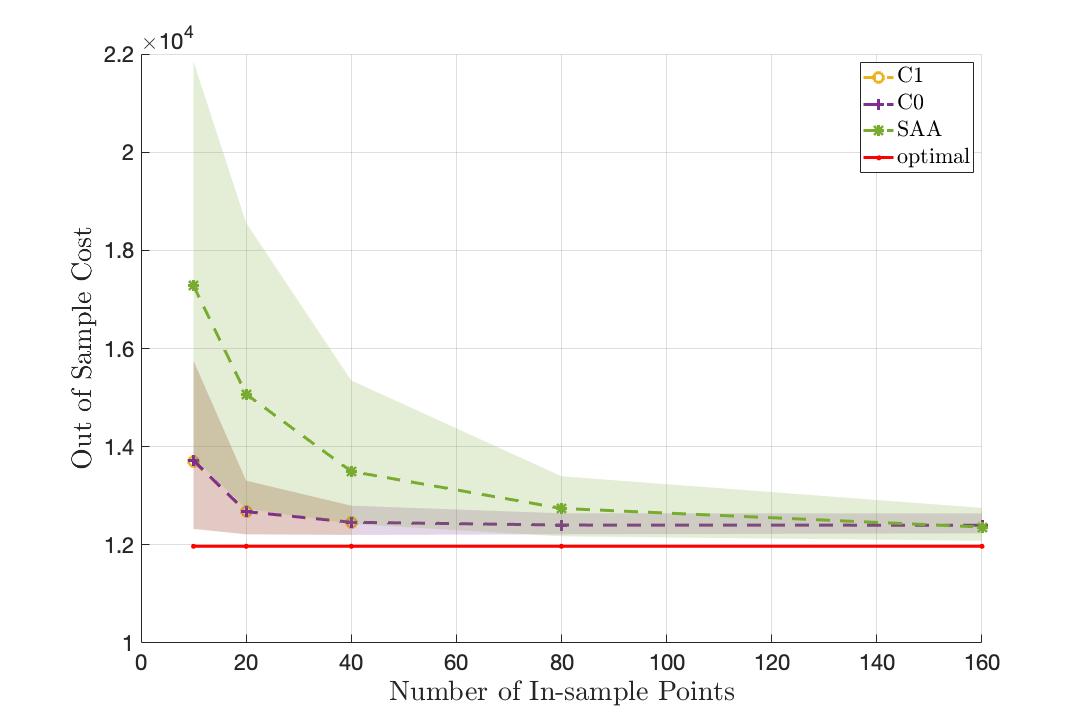}
		\caption{The out-of-sample cost as a function of the number of in-sample data points. The dashed lines represent the mean value of the out-of-sample cost, and the shaded areas visualize the $10\%$ to $90\%$ quantile range over 100 trials.}
	        \label{fig:MS_performance}
 	\end{figure}
 	
\begin{table}[t!]
\color{black}
\centering
\begin{tabular}{|c|rrrr|}\hline
$N$  & \textbf{$\mathbf{C_1}$ SDP}  & \textbf{$\mathbf{C_0}$ SDP} &  \textbf{Benders $\mathbf{C_0}$} & \textbf{SAA} \\
[0.0mm] \hline
$10$  & 9.2330 & 1.4069 & 3.7496 & 0.0028\\
$20$  & 60.5041 & 3.4428 & 3.3312 & 0.0047\\
$40$ &  866.6756 & 8.0606 & 4.2539 & 0.0069\\ 
$80$ &  $\backslash$ & 29.2646 & 6.9254 & 0.0166\\
$160$ & $\backslash$ & 152.2323 & 13.0412 & 0.0421\\
\hline\hline
\end{tabular}
\caption[]{The average runtime of different approaches over 100 trials.}
\label{tbl:MS_runtime}
\end{table}

The out-of-sample performances of \textbf{$\mathbf{C_0}$ SDP}, \textbf{$\mathbf{C_1}$ SDP} and \textbf{SAA} are shown in Figure~\ref{fig:MS_performance}. In the medical scheduling problem, \textbf{$\mathbf{C_0}$ SDP} and \textbf{$\mathbf{C_1}$ SDP} perform similarly with a small number of in-sample data. And both the SDP solution schemes generate more than $20\%$ improvements over $\textbf{SAA}$. We observe that the out-of-sample costs of the three methods decrease with the number of in-sample data points, which get closer to the optimal value as the size of in-sample data increases.

Table~\ref{tbl:MS_runtime} presents the runtimes of various methods including the one where we apply the decomposition algorithm to \textbf{$\mathbf{C_0}$ SDP}, denoted as \textbf{Benders $\mathbf{C_0}$}. The advantage of using \textbf{$\mathbf{C_0}$ SDP} rather than \textbf{$\mathbf{C_1}$ SDP} is confirmed again as \textbf{$\mathbf{C_0}$ SDP} achieves similar performance with notably less runtime. As expected, the decomposition algorithm accelerates the solving procedure of \textbf{$\mathbf{C_0}$ SDP}. 

	\begin{figure}[t!]
 		\begin{center}
		\begin{subfigure}[b]{.5\textwidth}
			\includegraphics[width=1\linewidth]{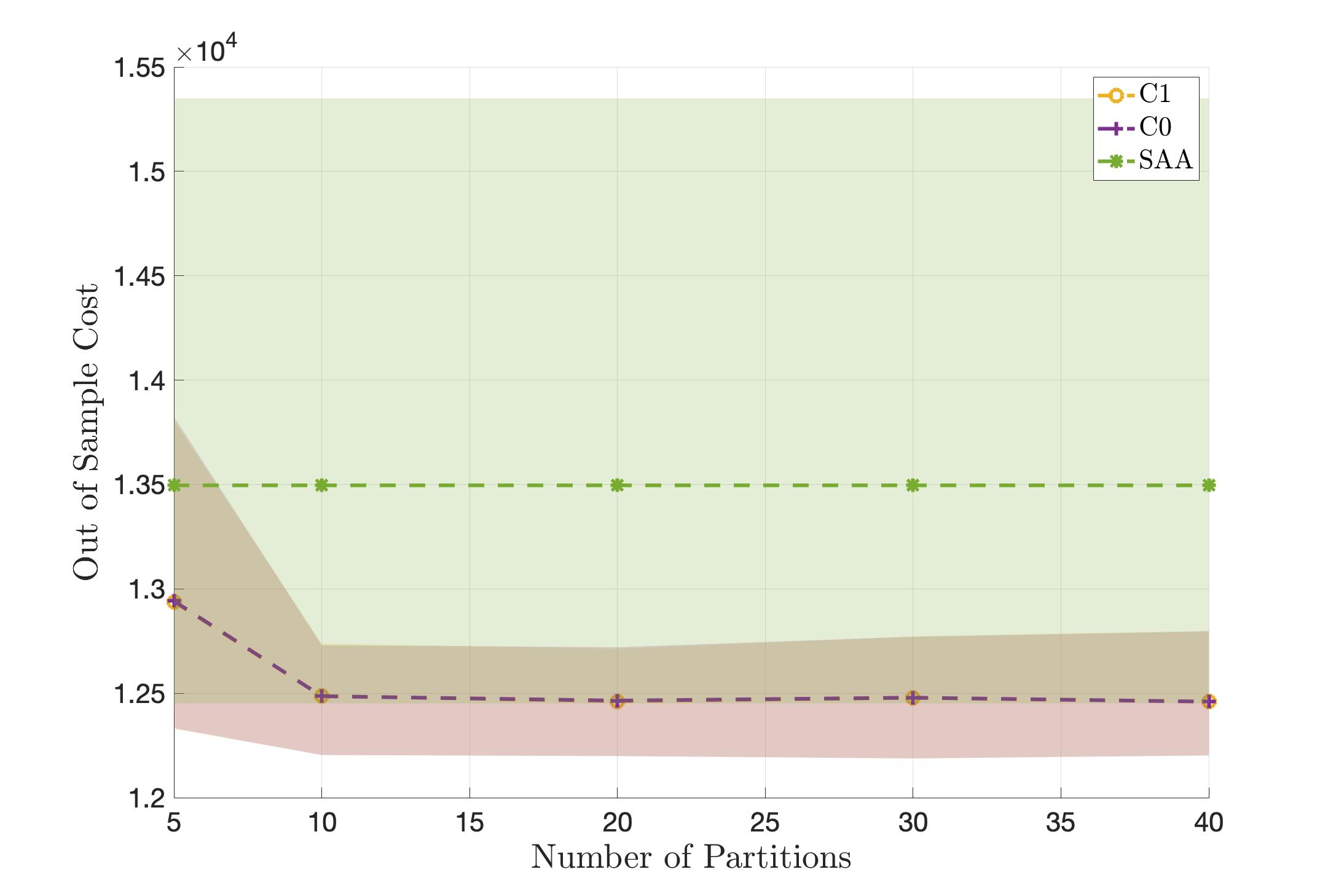}
		\end{subfigure}\hfill%
		\begin{subfigure}[b]{.5\textwidth}
			\includegraphics[width=1\linewidth]{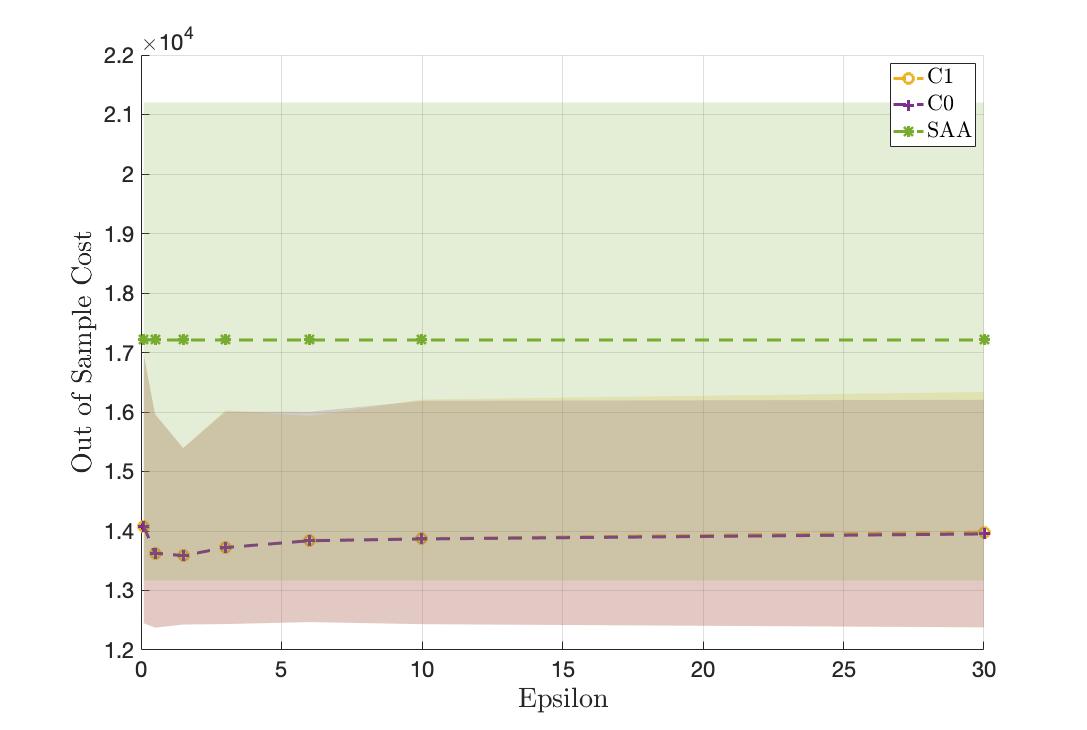}
		\end{subfigure}\hfill%
		\vspace{-3mm}
		\caption{The out-of-sample cost as a function of the number of partitions $K$ (left) with fixed in-sample data size $N = 40$, where values of epsilon are defined using cross validation. And the out-of-sample cost as a function of the value of epsilon $\epsilon_k$ (right) with fixed in-sample size and fixed number of partitions $K = N = 10$.}
		    \label{fig:MS}
 		\end{center}
 	\end{figure}
 	
We can further observe the influence of the number of partitions and the values of epsilon on the performances of \textbf{$\mathbf{C_0}$ SDP} and \textbf{$\mathbf{C_1}$ SDP} in Figures~\ref{fig:MS}. With a fixed number of in-sample samples, the larger number of partitions leads to better out-of-sample performance and the best point is reached when $K=N$. Figure~\ref{fig:MS} (right) shows that the optimal values of epsilon $\epsilon_k, \ k \in [K]$, are obtained between 0 and 3 which give the best out-of-sample performance.

\section{Conclusion}
\label{conclusion}
While the solution schemes of two-stage DRO problems with fixed recourse have been widely researched, the general settings of random recourse have so far resisted effective solution schemes due to their high intractability. 
In this paper, we leveraged the decision rule approach and derived new copositive programming reformulations for the generic two-stage DRO problems. Given the analytical generalization bounds of the proposed ambiguity set defined with two layers of robustness, we theoretically proved the finite sample guarantees on the resulting solutions. 
Furthermore, using the semidefinite approximations for the emerging copositive programs and in conjunction with the decomposition algorithm, we demonstrated that our solution method achieves near-optimal out-of-sample performance---even in view of limited sample sizes---with acceptable computation costs through a variety of numerical experiments. 
Remarkably, the decomposition algorithm empowers us to effectively solve  two-stage DRO problems with integer first-stage decisions. 

We propose two directions that deserve further investigation. First, it is interesting to explore ways to construct the partitions of the piecewise decision rule scheme other than the Voronoi regions. Different partitions may lead to  solutions with different optimality and scalability properties. Second, it is imperative to extend the proposed approach to  address multi-stage DRO problems with random recourse 
that can attain the
same scalability and attractive theoretical performance guarantee.

\section*{Acknowledgement}
We are grateful to Soroosh Shafieezadeh Abadeh for the valuable discussions and constructive suggestions which led to substantial improvements of this paper. This research was supported by the National Science Foundation grant no. 1752125.

\bibliographystyle{siam}
\bibliography{references}

\newpage

\appendix

\section{Proofs}
\subsection{Proof of Proposition 1}
\begin{proof}
Recall that the definition of CVaR at level $\delta$ regarding to the distribution $\PP$ is given by
$$\PP\text{-CVaR}_\delta [Z(\bm x, \bm \xi)] = \inf_{\theta \in \RR} \left \{ \theta + \frac{1}{\delta} \EE_{\PP} \left ( [Z(\bm x, \bm \xi) - \theta] ^{+}  \right ) \right \},$$
see \cite{CVaR-def}. According to a stochastic min-max theorem by Shapiro and Kleywegt \cite{minimax-thm}, the worst-case CVaR in the objective function can be reformulated as
$$\sup_{\PP\in \mathcal P} \PP\text{-CVaR}_\delta [Z(\bm x, \bm \xi)] = \inf_{\theta \in \RR}  \theta + \frac{1}{\delta} \sup_{\PP\in \mathcal P} \EE_{\PP} \left ( [Z(\bm x, \bm \xi) - \theta] ^{+} \right ).$$
By defining the epigraphical variable $\tau(\bm \xi)$ to represent $\max\left \{ (\bm D \bm \xi)^\top \bm y (\bm \xi) - \theta, 0 \right \}$ in the objective, 
and introducing two  inequalities $\tau(\bm \xi) \geq 0$ and $ \tau(\bm \xi) \geq (\bm D \bm \xi)^\top \bm y (\bm \xi) - \theta$ in the constraints, we obtain the semi-infinite linear program~\eqref{eq:2s-dro2}.
Thus, the claim follows. \end{proof}

\subsection{Proof of Lemma 6}
\begin{proof}
After applying linear decision rules, checking complete recourse is equivalent to checking the feasibility of the constraint
\begin{equation}
\label{complete_recourse_LDR1}
\begin{array}{clll}
 0 < & \displaystyle \inf_{\bm \xi \in \RR} & \bm \xi^\top \bm W_\ell^\top \bm Y \bm\xi \\
 &\st & \mathbf{e}_{S+1}^\top \bm \xi = 1\\
 & & \bm\xi \in  \mathcal K, \\
 \end{array}
\end{equation}
for each $\ell \in [L]$. 
From Lemma \ref{lem2}, \eqref{complete_recourse_LDR1} is equivalent to the constraint involving the CPP whose dual problem leads to a  new constraint given by  
\begin{equation}
\label{complete_recourse_LDR2}
\begin{array}{cll}
 0 < & \displaystyle \sup & \beta_\ell \\
 &\st & \beta_\ell \in \RR, \ \bm Y \in \RR^{N_2 \times (S+1)}\\
 & & \frac{1}{2} \left (\bm Y^\top \bm W_\ell + \bm W_\ell^\top \bm Y \right ) - \beta_\ell \mathbf{e}_{S+1} \mathbf{e}_{S+1}^\top 
  \in \mathcal{C} (\mathcal K). \\
\end{array}
\end{equation}
Strong duality holds based on Lemma \ref{lem3}. Furthermore, the proof of the lemma implies that the optimal value of the dual problem in \eqref{complete_recourse_LDR2} is attained. Thus, the constraint is satisfied if and only if there exists $\beta_\ell \in \RR_{++}$ such that 
\begin{equation}
    \frac{1}{2}\left (\bm Y^\top \bm W_\ell + \bm W_\ell^\top \bm Y \right ) - \beta_\ell \mathbf{e}_{S+1} \mathbf{e}_{S+1}^\top \in {\mathcal{C}(\mathcal{K})} 
\end{equation}
is satisfied. This completes the proof. 
\end{proof}

\subsection{Proof of Proposition 5}
\begin{proof}
Satisfying complete recourse under linear decision rules, we conclude that there exists $\bm Y_k \in \RR^{N_2 \times (S+1)}$ such that $(\bm W_\ell\bm \xi)^\top \bm Y_k \bm \xi > 0$ for all $\bm \xi \in \Xi$ and $\ell \in [L]$.

As illustrated in Section \ref{copositive_reformulations}, the constraints in $\eqref{eq:pdr_ref2_z}$
\begin{equation*}
\left \{
\begin{aligned}
 & \pi^k_\ell + \kappa_\ell \theta \geq 0 \quad \forall \ell \in [L+2] \\
 & \bm \Delta^k_\ell(\bm x,\bm Y_k,\bm Q_k) - \pi^k_\ell \mathbf{e}_{S+1} \mathbf{e}_{S+1}^\top 
\in \mathcal{C} (\mathcal K_k) \quad \forall \ell \in [L+2]  \\
\end{aligned}
\right.
\end{equation*}
are equivalent to the following constraints:
\begin{equation}
\label{eq:pdr_cons1_6}
\bm{\mathcal{T}}_\ell (\bm x)^\top\bm \xi  \leq (\bm{\mathcal{W}}_\ell \bm\xi)^\top\bm Y_k\bm\xi + \lambda_\ell \bm \xi^\top \bm Q_k \bm \xi + \kappa_\ell \theta  \quad \forall \bm\xi \in \Xi_k \ \forall \ell \in [L+2].    
\end{equation}
For $\ell \in [L]$, there is always an $a_k^\ell > 0$ such that $a_k^\ell(\bm W_\ell \bm \xi)^\top \bm Y_k \bm \xi$ exceeds $\bm T_\ell(\bm x)^\top \bm \xi$ for all $\bm \xi$ in the bounded support set $\Xi_k$. By setting $\bm Y_k\leftarrow (\max_{\ell\in[L]}a^\ell_k) \bm Y_k$, the strict inequality of \eqref{eq:pdr_cons1_6} holds. For $\ell \in \{L+1, L+2\}$, there always exists $\bm Q_k \in \RR^{(S+1) \times (S+1)}$ such that $0 < \bm \xi^\top \bm Q_k \bm \xi$ and $(\bm D \bm \xi)^\top \bm Y_k \bm \xi - \theta< \bm \xi^\top \bm Q_k \bm \xi$ for all $\bm \xi$ in the bounded set $\Xi_k$ because $\bm Q_k$ is a free variable that does not appear in other constraints in $\eqref{eq:pdr_cons1_6}$. Thus, the strict inequality of \eqref{eq:pdr_cons1_6} holds.

Analogously, since $\bm B_k$ and $\alpha_k$ are free variables that do not appear in other constraints in $\eqref{eq:pdr_ref2_z}$, with nonempty cone $\mathcal{K}_k$, there always exist $\bm B_k \in \mathbb{S}^{S+1}, \ \alpha_k \in \RR$ such that
\begin{equation*}
    \bm B_k + \alpha_k \mathbf{e}_{S+1} \mathbf{e}_{S+1}^\top \succ_{\mathcal{C}(\mathcal{K}_k)} \bm 0.
\end{equation*}

Based on the above illustration, we conclude that the primal problem $\eqref{eq:pdr_ref2_z}$ satisfies the Slater's condition. Thus, the claim follows. 
\end{proof}



\section{Semidefinite Programming Solution Schemes}

We consider the generic structure of the cone $\mathcal{K}_k$ under Assumption 1 as the intersection of polyhedral and second-order cones
\begin{equation*}
\begin{array}{clll}
\mathcal K_k:= \{ \bm \xi \in \RR^{S} \times \RR_{+} :\hat{\bm P}_k \bm \xi \geq \bm 0, \ \hat{\bm R}_k \bm \xi\in \mathcal{SOC}(K_r) \},
\end{array}
\end{equation*}
where $\hat{\bm P}_k \in \RR^{S_p \times (S+1)}$ and $\hat{\bm R}_k \in \RR^{S_r \times (S+1)}$. The corresponding support sets $\Xi_k, \ k \in [K]$, are polyhedral and second-order cone representable.

The recently proposed semidefinite approximation schemes to the cone $\mathcal{C} (\mathcal K_k)$ are described in the following propositions. 
\begin{proposition} \textup{(\cite[Theorem B.3.1]{S-lemma}).}
A conservative semidefinite-representable approximation scheme to the generalized copositive cone $\mathcal{C} (\mathcal K_k)$ in view of the approximate S-lemma is given by
\begin{equation}
\label{eq:sdp1}
\begin{array}{clll}
     \mathcal{IA}_0(\mathcal K_k) \coloneqq \left\{ \bm V_k \in \mathbb{S}^{S+1}: \begin{matrix} \tau_k \geq 0, \ \bm \beta_k \in \RR^{S_p}_{+}, \ \bm U_k \in \mathbb{S}^{S+1}, \ \bm U_k \succeq \bm 0\\
     \bm V_k = \bm U_k + \tau \hat{\bm M}_k + \frac{1}{2}(\hat{\bm P}_k^\top \bm \beta_k \mathbf{e}^\top_{S+1} + \mathbf{e}_{S+1}\bm \beta_k^\top\hat{\bm P}_k)
    \end{matrix}\right\},
\end{array}
\end{equation}
where the matrix $\hat{\bm M}_k \in \mathbb{S}^{S+1}$ is defined as
\begin{equation}
\label{eq:sdp_m}
\begin{array}{clll}
\hat{\bm M}_k:= \hat{\bm R}_k^\top \mathbf{e}_{S_r}\mathbf{e}_{S_r}^\top \hat{\bm R}_k - \sum_{\ell =  1}^{S_r - 1}\hat{\bm R}_k^\top \mathbf{e}_{\ell}\mathbf{e}_{\ell}^\top \hat{\bm R}_k.
\end{array}
\end{equation}
\end{proposition}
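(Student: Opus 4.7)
The plan is to establish the containment $\mathcal{IA}_0(\mathcal{K}_k) \subseteq \mathcal{C}(\mathcal{K}_k)$, which is precisely the content of the claim that the scheme is a \emph{conservative} approximation. Semidefinite representability of $\mathcal{IA}_0(\mathcal{K}_k)$ is immediate from its definition, since $\bm V_k$ is expressed as a linear image of the PSD cone $\{\bm U_k \succeq \bm 0\}$, the nonnegative orthant $\{\bm \beta_k \geq \bm 0\}$, and the nonnegative scalar $\tau \geq 0$. So the substantive work is verifying that every $\bm V_k$ admitting the stated decomposition satisfies $\bm \xi^\top \bm V_k \bm \xi \geq 0$ for all $\bm \xi \in \mathcal{K}_k$.

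To that end, I would fix an arbitrary $\bm V_k \in \mathcal{IA}_0(\mathcal{K}_k)$ and any $\bm \xi \in \mathcal{K}_k$, and evaluate $\bm \xi^\top \bm V_k \bm \xi$ term by term against the three summands. First, $\bm \xi^\top \bm U_k \bm \xi \geq 0$ by positive semidefiniteness of $\bm U_k$. Second, letting $\bm w = \hat{\bm R}_k \bm \xi$, the definition of $\hat{\bm M}_k$ in \eqref{eq:sdp_m} gives $\bm \xi^\top \hat{\bm M}_k \bm \xi = w_{S_r}^2 - \sum_{\ell=1}^{S_r-1} w_\ell^2$, which is exactly the defining nonnegative quadratic form of the second-order cone; since $\bm \xi \in \mathcal{K}_k$ enforces $\bm w \in \mathcal{SOC}(K_r)$, this expression is nonnegative, and the factor $\tau \geq 0$ preserves nonnegativity. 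Third, the symmetrized polyhedral-certificate term reduces to $(\hat{\bm P}_k \bm \xi)^\top \bm \beta_k \cdot (\mathbf{e}_{S+1}^\top \bm \xi)$, a product of three nonnegative quantities: $\hat{\bm P}_k \bm \xi \geq \bm 0$ by $\bm \xi \in \mathcal{K}_k$, $\bm \beta_k \geq \bm 0$ by construction, and $\mathbf{e}_{S+1}^\top \bm \xi \geq 0$ because $\mathcal{K}_k \subseteq \RR^S \times \RR_+$ under Assumption~1. Summing the three nonnegative contributions yields $\bm \xi^\top \bm V_k \bm \xi \geq 0$, hence $\bm V_k \in \mathcal{C}(\mathcal{K}_k)$.

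The main obstacle, such as it is, lies in recognizing the role of $\hat{\bm M}_k$: one must spot that $\mathbf{e}_{S_r}\mathbf{e}_{S_r}^\top - \sum_{\ell=1}^{S_r-1} \mathbf{e}_\ell \mathbf{e}_\ell^\top$ is the signed diagonal matrix whose associated quadratic form is nonnegative precisely on $\mathcal{SOC}(K_r)$, which then lets the second-order-cone constraint $\hat{\bm R}_k \bm \xi \in \mathcal{SOC}(K_r)$ be absorbed into the certificate via an approximate S-lemma dualization as in \cite[Theorem B.3.1]{S-lemma}. Once this identification is made, the rest is a routine sign check. I would also note that the inclusion is generally strict, so the scheme is genuinely conservative rather than exact; obtaining tightness (e.g., when the cone involves a single SOC and no polyhedral part) would require a separate argument based on the exact S-procedure, which is outside the scope of the statement.
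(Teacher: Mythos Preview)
Your argument is correct: the three-term sign check establishes $\mathcal{IA}_0(\mathcal{K}_k)\subseteq\mathcal{C}(\mathcal{K}_k)$ exactly as you describe, and semidefinite representability is indeed immediate from the definition. Note, however, that the paper does not supply its own proof of this proposition; it is quoted as a known result from \cite[Theorem~B.3.1]{S-lemma} and stated without argument, so there is no in-paper proof to compare against---your direct verification is the standard one and would serve perfectly well as a self-contained justification.
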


\begin{proposition}\textup{(\cite[Proposition 1]{MSRO-decision-rules}).}
Another inner semidefinite approximation scheme to $\mathcal{C} (\mathcal K_k)$ is given by
\begin{equation}
\label{eq:sdp2}
\begin{array}{clll}
     \mathcal{IA}_1 (\mathcal K_k) \coloneqq \left\{ \bm V_k \in \mathbb{S}^{S+1}: \begin{matrix}\bm U_k \in \mathbb{S}^{S+1}, \ \bm U_k \succeq \bm 0, \ \bm \Sigma_k \in \mathbb{S}^{S_p}\\
     \bm \Psi_k \in \mathbb{S}^{S+1}, \ \bm \Phi_k \in \RR^{S_p \times S_r}, \ \tau_k \in \RR_{+}\\
     \bm V_k = \bm U_k + \tau_k \hat{\bm M}_k + \hat{\bm P}_k^\top \bm \Sigma_k \hat{\bm P}_k + \bm \Psi_k, \bm \Sigma_k \geq \bm 0\\
     \bm \Psi_k = \frac{1}{2}(\hat{\bm P}_k^\top \bm \Phi_k \hat{\bm R}_k  + \hat{\bm R}_k^\top \bm \Phi_k^\top \hat{\bm P}_k), \ \textup{Rows}(\bm \Phi_k) \in \mathcal{SOC}(S_r)
    \end{matrix}\right\},
\end{array}
\end{equation}
where $\hat{\bm M}_k$ is defined in \eqref{eq:sdp_m}.
\end{proposition}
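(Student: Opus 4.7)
The plan is to verify directly that every matrix $\bm V_k$ admitted by the representation of $\mathcal{IA}_1(\mathcal{K}_k)$ is copositive with respect to $\mathcal{K}_k$, i.e., $\bm\xi^\top \bm V_k \bm\xi \geq 0$ for all $\bm\xi \in \mathcal{K}_k$. Under Assumption 1 and the generic form given in Appendix B, any such $\bm\xi$ satisfies $\hat{\bm P}_k \bm\xi \geq \bm 0$ and $\hat{\bm R}_k \bm\xi \in \mathcal{SOC}(S_r)$. I would therefore set $\bm z := \hat{\bm P}_k \bm\xi$ and $\bm y := \hat{\bm R}_k \bm\xi$, and evaluate the quadratic form in the decomposition
\begin{equation*}
\bm V_k = \bm U_k + \tau_k \hat{\bm M}_k + \hat{\bm P}_k^\top \bm \Sigma_k \hat{\bm P}_k + \bm \Psi_k
\end{equation*}
term by term, showing each of the four contributions to $\bm\xi^\top \bm V_k \bm\xi$ is non-negative.

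Three of the four terms are routine. First, $\bm\xi^\top \bm U_k \bm\xi \geq 0$ is immediate from $\bm U_k \succeq \bm 0$. Second, unpacking \eqref{eq:sdp_m} gives $\bm\xi^\top \hat{\bm M}_k \bm\xi = y_{S_r}^2 - \sum_{\ell=1}^{S_r-1} y_\ell^2$, which is non-negative precisely because $\bm y \in \mathcal{SOC}(S_r)$; combined with $\tau_k \geq 0$ this handles the second summand. Third, $\bm\xi^\top \hat{\bm P}_k^\top \bm \Sigma_k \hat{\bm P}_k \bm\xi = \bm z^\top \bm\Sigma_k \bm z \geq 0$ follows from the entrywise non-negativity of both $\bm z$ and $\bm \Sigma_k$.

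The main obstacle, and the reason $\mathcal{IA}_1(\mathcal{K}_k)$ is tighter than $\mathcal{IA}_0(\mathcal{K}_k)$, is the cross term $\bm\Psi_k$, which couples the polyhedral and second-order-cone descriptions of $\mathcal{K}_k$. Using the symmetrization identity $\bm\xi^\top \bm M \bm\xi = \bm\xi^\top \tfrac{1}{2}(\bm M + \bm M^\top)\bm\xi$, I would rewrite $\bm\xi^\top \bm\Psi_k \bm\xi = \bm\xi^\top \hat{\bm P}_k^\top \bm\Phi_k \hat{\bm R}_k \bm\xi = \bm z^\top \bm\Phi_k \bm y = \sum_{i=1}^{S_p} z_i (\bm\phi_i^\top \bm y)$, where $\bm\phi_i^\top$ denotes the $i$-th row of $\bm\Phi_k$. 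The key step is to invoke the self-duality of the second-order cone: the constraint $\textup{Rows}(\bm\Phi_k) \in \mathcal{SOC}(S_r)$ places each $\bm\phi_i$ in $\mathcal{SOC}(S_r)$, and since $\bm y \in \mathcal{SOC}(S_r)$ as well, one has $\bm\phi_i^\top \bm y \geq 0$ for every $i$. Combined with $z_i \geq 0$, this makes $\bm\xi^\top \bm\Psi_k \bm\xi$ non-negative, and summing all four contributions establishes $\bm V_k \in \mathcal{C}(\mathcal{K}_k)$, hence $\mathcal{IA}_1(\mathcal{K}_k) \subseteq \mathcal{C}(\mathcal{K}_k)$.

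Finally, semidefinite-representability of $\mathcal{IA}_1(\mathcal{K}_k)$ is essentially by inspection: $\bm U_k \succeq \bm 0$ is a PSD constraint, $\tau_k \geq 0$ and $\bm\Sigma_k \geq \bm 0$ are linear, and $\textup{Rows}(\bm\Phi_k) \in \mathcal{SOC}(S_r)$ is a finite family of second-order cone constraints, all admissible within an SDP. The defining equation $\bm V_k = \bm U_k + \tau_k \hat{\bm M}_k + \hat{\bm P}_k^\top \bm \Sigma_k \hat{\bm P}_k + \bm \Psi_k$ together with $\bm \Psi_k = \tfrac{1}{2}(\hat{\bm P}_k^\top \bm \Phi_k \hat{\bm R}_k + \hat{\bm R}_k^\top \bm \Phi_k^\top \hat{\bm P}_k)$ is linear in the lifted variables, so the overall set is an SDP-representable inner approximation of $\mathcal{C}(\mathcal{K}_k)$, completing the argument.
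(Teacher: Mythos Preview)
Your argument is correct. The paper does not supply its own proof of this proposition; it simply cites \cite[Proposition 1]{MSRO-decision-rules} and states the result. Your direct term-by-term verification---using positive semidefiniteness of $\bm U_k$, the second-order-cone membership $\hat{\bm R}_k\bm\xi\in\mathcal{SOC}(S_r)$ for the $\hat{\bm M}_k$ term, entrywise non-negativity for the $\hat{\bm P}_k^\top\bm\Sigma_k\hat{\bm P}_k$ term, and self-duality of the second-order cone for the cross term $\bm\Psi_k$---is exactly the natural proof and matches what one finds in the cited reference.
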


Replacing the copositive cone $\mathcal C(\mathcal K_k)$ in the constraints of $\eqref{eq:pdr_ref}$ with the inner approximations $\mathcal IA_0 (\mathcal K_k)$ and $\mathcal IA_1 (\mathcal K_k), \ k \in [K]$, separately gives rise to conservative semidefinite programs, whose optimal objective values are denoted as $J^{\textup{PDR}}_0$ and $J^{\textup{PDR}}_1$, respectively. The following theorem compares the performances of these semidefinite approximations.
\begin{theorem} \textup{(\cite[Proposition 1 and 3]{MSRO-decision-rules}).} \label{thm4} We have 
\begin{equation*}
    \mathcal{IA}_0 (\mathcal K_k) \subseteq \mathcal{IA}_1 (\mathcal K_k) \subseteq \mathcal{C}(\mathcal K_k) \quad \forall k \in [K].
\end{equation*} 
Thus, the following chains of inequalities hold: 
\begin{equation*}
    J^{\textup{PDR}} \leq J^{\textup{PDR}}_1 \leq J^{\textup{PDR}}_0.
\end{equation*}  
\end{theorem}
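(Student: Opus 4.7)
The plan is to split Theorem \ref{thm4} into two cone-inclusion claims, prove each, and then derive the objective-value chain $J^{\textup{PDR}} \leq J^{\textup{PDR}}_1 \leq J^{\textup{PDR}}_0$ as an immediate consequence of feasible-set containment.

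For the outer inclusion $\mathcal{IA}_1(\mathcal{K}_k) \subseteq \mathcal{C}(\mathcal{K}_k)$, the approach is to take any $\bm V_k \in \mathcal{IA}_1(\mathcal{K}_k)$ with its certificate $(\bm U_k,\tau_k,\bm \Sigma_k,\bm \Phi_k)$ and verify $\bm \xi^\top \bm V_k \bm \xi \geq 0$ pointwise on $\mathcal{K}_k$ by checking each summand separately. The contribution $\bm \xi^\top \bm U_k \bm \xi$ is nonnegative from $\bm U_k \succeq \bm 0$, and $\tau_k \bm \xi^\top \hat{\bm M}_k \bm \xi$ is nonnegative because $\hat{\bm R}_k \bm \xi \in \mathcal{SOC}(S_r)$ forces $(\hat{\bm R}_k \bm \xi)_{S_r}^2 \geq \sum_{\ell < S_r}(\hat{\bm R}_k \bm \xi)_\ell^2$. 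For the polyhedral piece, $(\hat{\bm P}_k \bm \xi)^\top \bm \Sigma_k (\hat{\bm P}_k \bm \xi) \geq 0$ follows from $\hat{\bm P}_k \bm \xi \geq \bm 0$ together with $\bm \Sigma_k \geq \bm 0$ componentwise. The mixed polyhedral/SOC term $\bm \xi^\top \bm \Psi_k \bm \xi = (\hat{\bm P}_k \bm \xi)^\top \bm \Phi_k (\hat{\bm R}_k \bm \xi)$ I would expand row-by-row; each summand has the form $(\hat{\bm P}_k \bm \xi)_i\, \bm \phi_i^\top (\hat{\bm R}_k \bm \xi)$, where $\bm \phi_i \in \mathcal{SOC}(S_r)$ and $\hat{\bm R}_k \bm \xi \in \mathcal{SOC}(S_r)$, so a Cauchy--Schwarz estimate on the first $S_r-1$ coordinates shows $\bm \phi_i^\top (\hat{\bm R}_k \bm \xi) \geq 0$, and multiplication by the nonnegative scalar $(\hat{\bm P}_k \bm \xi)_i$ preserves sign.

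For the inner inclusion $\mathcal{IA}_0(\mathcal{K}_k) \subseteq \mathcal{IA}_1(\mathcal{K}_k)$, the plan is constructive: given an $\mathcal{IA}_0$-certificate $(\bm U_k,\tau_k,\bm \beta_k)$, exhibit an $\mathcal{IA}_1$-certificate reproducing the same $\bm V_k$. Reuse $\bm U_k$ and $\tau_k$, set $\bm \Phi_k = \bm 0$ (so $\bm \Psi_k = \bm 0$), and design $\bm \Sigma_k$ to reproduce the cross term $\tfrac{1}{2}(\hat{\bm P}_k^\top \bm \beta_k \mathbf{e}_{S+1}^\top + \mathbf{e}_{S+1}\bm \beta_k^\top \hat{\bm P}_k)$. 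The crucial observation is that the inequality $\xi_{S+1} \geq 0$ is built into $\mathcal{K}_k$ (since $\mathcal{K}_k \subseteq \RR^S \times \RR_+$), so $\mathbf{e}_{S+1}^\top$ may without loss of generality be included as the $j$-th row of $\hat{\bm P}_k$, giving $\mathbf{e}_{S+1} = \hat{\bm P}_k^\top \mathbf{e}_j$. Then the symmetric matrix $\bm \Sigma_k \coloneqq \tfrac{1}{2}(\bm \beta_k \mathbf{e}_j^\top + \mathbf{e}_j \bm \beta_k^\top)$ is componentwise nonnegative (because $\bm \beta_k \geq \bm 0$), and a direct computation gives $\hat{\bm P}_k^\top \bm \Sigma_k \hat{\bm P}_k$ equal to the desired cross term. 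The value chain is then immediate: replacing $\mathcal{C}(\mathcal{K}_k)$ in \eqref{eq:pdr_ref} by a subset shrinks the feasible set of a minimization problem, so the optimal value cannot decrease.

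The main obstacle I expect is the structural step in the second inclusion, namely realizing $\mathbf{e}_{S+1}$ as $\hat{\bm P}_k^\top \mathbf{e}_j$. If the given description of $\mathcal{K}_k$ does not explicitly list $\xi_{S+1} \geq 0$ as a polyhedral inequality, one must either augment $\hat{\bm P}_k$ with this redundant row (harmless since the inequality is implied by $\mathcal{K}_k \subseteq \RR^S \times \RR_+$) or compensate through a suitable choice of $\bm \Phi_k$ using the SOC part, which lengthens the bookkeeping without altering the overall strategy. All remaining steps are algebraic manipulation, bilinear expansion, and standard SOC/Cauchy--Schwarz estimates.
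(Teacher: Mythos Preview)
The paper does not prove Theorem~\ref{thm4}; it is quoted directly from Propositions~1 and~3 of \cite{MSRO-decision-rules} with no accompanying argument. Your direct approach---pointwise verification of $\bm\xi^\top\bm V_k\bm\xi\geq 0$ on $\mathcal{K}_k$ term by term for the outer inclusion, and explicit certificate translation for the inner inclusion---is precisely the standard argument one would give and is correct. The one caveat you flag is real: the cones $\mathcal{IA}_0(\mathcal{K}_k)$ and $\mathcal{IA}_1(\mathcal{K}_k)$ depend on the chosen description $(\hat{\bm P}_k,\hat{\bm R}_k)$ of $\mathcal{K}_k$, not merely on the set, so for $\mathcal{IA}_0\subseteq\mathcal{IA}_1$ to hold as written one needs the constraint $\xi_{S+1}\geq 0$ (which is part of $\mathcal{K}_k\subseteq\RR^S\times\RR_+$) to appear among the rows of $\hat{\bm P}_k$; the explicit occurrence of $\mathbf{e}_{S+1}$ in the paper's formula~\eqref{eq:sdp1} for $\mathcal{IA}_0$ makes this the intended reading. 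With that convention your construction goes through verbatim, and the optimal-value chain then follows immediately from feasible-set containment as you say.
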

Theorem \ref{thm4} establishes that both $\mathcal{IA}_0 (\mathcal K_k)$ and $\mathcal{IA}_1 (\mathcal K_k)$ are subsets of $\mathcal{C}(\mathcal K_k)$. Additionally, $\mathcal{IA}_0 (\mathcal K_k)$ is inferior to $\mathcal{IA}_1 (\mathcal K_k)$ with regard to the performance of approximating $\mathcal{C}(\mathcal K_k), \ k \in [K]$. 
Nonetheless, it is essential to note that the simpler semidefinite program with approximation $\mathcal{IA}_0 (\mathcal K_k)$ requires significantly less runtime. 
We show in the experiments that $\mathcal{IA}_0 (\mathcal K_k)$ is preferable when balancing the suboptimality and scalability of the inner approximations. 

Next, we develop the semidefinite approximations to the completely positive cone  $\mathcal{C^*(K)}$, which are the duals of the primal approximations $\eqref{eq:sdp1}$ and $\eqref{eq:sdp2}$.

\begin{proposition}
The outer semidefinite-representable approximation to the completely positive cone $\mathcal{C^*}(\mathcal{K}_k)$ corresponding to $\mathcal{IA}_0 (\mathcal K_k)$ is given by
\begin{equation}
\label{eq:sdp1-dual}
\begin{array}{cl}
     \mathcal{OA}_0 (\mathcal K_k) \coloneqq \left\{ \bm F_k \in \mathbb{S}^{S+1}: \begin{matrix} \bm F_k \succeq \bm 0\\
     \displaystyle \tr \left( \bm F_k \hat{\bm M}_k \right) \geq 0\\
     \hat{\bm P}_k \bm F_k \mathbf{e}_{S+1} \geq \bm 0
    \end{matrix}\right\},
\end{array}
\end{equation}
where $\hat{\bm M}_k$ is defined as in \eqref{eq:sdp_m}.
\end{proposition}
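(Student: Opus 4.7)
The plan is to verify that $\mathcal{OA}_0(\mathcal K_k)$ is precisely the dual cone of $\mathcal{IA}_0(\mathcal K_k)$. Since $\mathcal{IA}_0(\mathcal K_k) \subseteq \mathcal{C}(\mathcal K_k)$ by Theorem~\ref{thm4}, taking duals on both sides yields $\mathcal{C}^*(\mathcal K_k) \subseteq \mathcal{IA}_0^*(\mathcal K_k)$, so identifying the dual of $\mathcal{IA}_0(\mathcal K_k)$ with the expression in \eqref{eq:sdp1-dual} immediately certifies it as an outer semidefinite-representable approximation of $\mathcal{C}^*(\mathcal K_k)$. Thus the entire argument reduces to a direct dualization calculation.

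First, I would take an arbitrary $\bm F_k \in \mathbb{S}^{S+1}$ and compute the Frobenius inner product $\langle \bm F_k, \bm V_k\rangle$ against a generic element $\bm V_k = \bm U_k + \tau_k \hat{\bm M}_k + \tfrac{1}{2}(\hat{\bm P}_k^\top \bm \beta_k \mathbf{e}_{S+1}^\top + \mathbf{e}_{S+1}\bm \beta_k^\top \hat{\bm P}_k)$ of $\mathcal{IA}_0(\mathcal K_k)$. Using linearity of the trace and the cyclic property, this decomposes as
\begin{equation*}
\langle \bm F_k, \bm V_k\rangle = \langle \bm F_k, \bm U_k\rangle + \tau_k\, \tr(\bm F_k \hat{\bm M}_k) + \bm \beta_k^\top (\hat{\bm P}_k \bm F_k \mathbf{e}_{S+1}),
\end{equation*}
where symmetry of $\bm F_k$ is used to combine the two symmetric rank-one-like terms into one. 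By definition, $\bm F_k$ lies in the dual cone if and only if this expression is nonnegative for every admissible choice of $\bm U_k \succeq \bm 0$, $\tau_k \geq 0$, and $\bm \beta_k \in \RR^{S_p}_+$.

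Second, I would isolate each generating variable and apply the standard self-duality or polarity results to extract the three necessary and sufficient conditions. Setting $\tau_k = 0$ and $\bm \beta_k = \bm 0$ and varying $\bm U_k$ over the PSD cone yields $\bm F_k \succeq \bm 0$ by self-duality of $\mathbb{S}^{S+1}_+$. Setting $\bm U_k = \bm 0$, $\bm \beta_k = \bm 0$ and varying $\tau_k \geq 0$ yields $\tr(\bm F_k \hat{\bm M}_k) \geq 0$. Finally, setting $\bm U_k = \bm 0$, $\tau_k = 0$ and varying $\bm \beta_k \in \RR^{S_p}_+$ yields $\hat{\bm P}_k \bm F_k \mathbf{e}_{S+1} \in \RR^{S_p}_+$ by self-duality of the nonnegative orthant. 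Conversely, if all three conditions hold, then each of the three summands above is individually nonnegative for any admissible $(\bm U_k, \tau_k, \bm \beta_k)$, hence their sum is too, so $\bm F_k$ lies in the dual cone.

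I do not anticipate any substantial obstacle: the only care needed is in the bookkeeping for the $\bm \beta_k$ term, specifically recognizing that $\tfrac{1}{2}\langle \bm F_k, \hat{\bm P}_k^\top \bm \beta_k \mathbf{e}_{S+1}^\top + \mathbf{e}_{S+1}\bm \beta_k^\top \hat{\bm P}_k\rangle$ collapses to $\bm \beta_k^\top \hat{\bm P}_k \bm F_k \mathbf{e}_{S+1}$ thanks to symmetry of $\bm F_k$, so that the extracted dual condition is a vector inequality on $\hat{\bm P}_k \bm F_k \mathbf{e}_{S+1}$ rather than a matrix inequality. Collecting the three conditions yields exactly the description of $\mathcal{OA}_0(\mathcal K_k)$ in \eqref{eq:sdp1-dual}, completing the proof.
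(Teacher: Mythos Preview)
Your proposal is correct and follows exactly the approach the paper indicates: the paper states (without an explicit proof) that $\mathcal{OA}_0(\mathcal K_k)$ is obtained as the dual of the primal approximation $\mathcal{IA}_0(\mathcal K_k)$, and your argument supplies precisely this dualization calculation in full detail. There is nothing to add or correct.
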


\begin{proposition}
The semidefinite approximation scheme to the cone $\mathcal{C^*}(\mathcal{K}_k)$ in accordance with $\mathcal{IA}_1 (\mathcal K_k)$ is given by
\begin{equation}
\label{eq:sdp2-dual}
\begin{array}{cl}
     \mathcal{OA}_1 (\mathcal K_k) \coloneqq \left\{ \bm F_k \in \mathbb{S}^{S+1}: \begin{matrix} \bm F_k \succeq \bm 0\\
     \displaystyle \tr \left( \bm F_k \hat{\bm M}_k \right) \geq 0\\
     \hat{\bm P}_k \bm F_k \hat{\bm P}_k^\top \geq \bm 0 \\
     \textup{Rows}(\hat{\bm P}_k \bm F_k \hat{\bm R}_k^\top) \in \mathcal{SOC}(S_r)
    \end{matrix}\right\},
\end{array}
\end{equation}
where $\hat{\bm M}_k$ is defined as in \eqref{eq:sdp_m}.
\end{proposition}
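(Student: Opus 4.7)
The plan is to establish the proposition by computing the dual cone of $\mathcal{IA}_1(\mathcal{K}_k)$ with respect to the trace inner product on $\mathbb{S}^{S+1}$, and then invoking the inclusion $\mathcal{IA}_1(\mathcal{K}_k) \subseteq \mathcal{C}(\mathcal{K}_k)$ from Theorem~\ref{thm4} to obtain $\mathcal{C}^*(\mathcal{K}_k) \subseteq \mathcal{IA}_1^*(\mathcal{K}_k)$. Concretely, I would start by fixing an arbitrary $\bm F_k \in \mathbb{S}^{S+1}$ and an arbitrary $\bm V_k \in \mathcal{IA}_1(\mathcal{K}_k)$ with its certifying decomposition $(\bm U_k,\tau_k,\bm \Sigma_k,\bm \Phi_k)$. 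Substituting the decomposition of $\bm V_k$ and applying the cyclic property of the trace, the inner product $\tr(\bm F_k \bm V_k)$ splits as
\begin{equation*}
\tr(\bm F_k \bm U_k) + \tau_k\,\tr(\bm F_k \hat{\bm M}_k) + \tr\!\bigl(\bm \Sigma_k\,\hat{\bm P}_k \bm F_k \hat{\bm P}_k^{\!\top}\bigr) + \tr\!\bigl(\bm \Phi_k\,\hat{\bm R}_k \bm F_k \hat{\bm P}_k^{\!\top}\bigr).
\end{equation*}

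Next, I would observe that each of the four dual variables ranges freely over its own self-dual (or self-describing) cone, so $\bm F_k$ lies in the dual cone of $\mathcal{IA}_1(\mathcal{K}_k)$ if and only if each of the four terms above is individually nonnegative for every valid choice of the corresponding parameter. The PSD freedom of $\bm U_k$ forces $\bm F_k \succeq \bm 0$ (using that the PSD cone is self-dual); the nonnegative scalar freedom of $\tau_k$ yields $\tr(\bm F_k \hat{\bm M}_k) \geq 0$; the componentwise nonnegativity of $\bm \Sigma_k$ yields $\hat{\bm P}_k \bm F_k \hat{\bm P}_k^{\!\top} \geq \bm 0$; and the row-wise $\mathcal{SOC}(S_r)$ constraint on $\bm \Phi_k$, combined with the self-duality of the second-order cone, yields $\textup{Rows}(\hat{\bm P}_k \bm F_k \hat{\bm R}_k^{\!\top}) \in \mathcal{SOC}(S_r)$ after transposing $\hat{\bm R}_k \bm F_k \hat{\bm P}_k^{\!\top}$. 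Collecting these four conditions recovers precisely the description of $\mathcal{OA}_1(\mathcal{K}_k)$ in \eqref{eq:sdp2-dual}, establishing $\mathcal{IA}_1^*(\mathcal{K}_k) = \mathcal{OA}_1(\mathcal{K}_k)$.

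Finally, I would conclude the outer-approximation property by taking duals on both sides of the inclusion $\mathcal{IA}_1(\mathcal{K}_k) \subseteq \mathcal{C}(\mathcal{K}_k)$ already established in Theorem~\ref{thm4}; since taking duals reverses inclusions for closed convex cones, this yields $\mathcal{C}^*(\mathcal{K}_k) \subseteq \mathcal{IA}_1^*(\mathcal{K}_k) = \mathcal{OA}_1(\mathcal{K}_k)$, as claimed. The main technical obstacle I expect is the SOC block: one must keep careful track of which matrix transpose appears in the bilinear pairing with $\bm \Phi_k$, so that after self-dualization the rows of the correct product (namely $\hat{\bm P}_k \bm F_k \hat{\bm R}_k^{\!\top}$, not $\hat{\bm R}_k \bm F_k \hat{\bm P}_k^{\!\top}$) land in $\mathcal{SOC}(S_r)$; verifying each of the other three dual correspondences is essentially a routine application of self-duality of the PSD and nonnegative-orthant cones.
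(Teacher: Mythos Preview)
Your proposal is correct and matches the paper's approach: the paper states (just before the proposition) that $\mathcal{OA}_0$ and $\mathcal{OA}_1$ are ``the duals of the primal approximations \eqref{eq:sdp1} and \eqref{eq:sdp2},'' but leaves the actual dual-cone computation implicit, whereas you spell it out term by term. Your decomposition of $\tr(\bm F_k \bm V_k)$ into four independent bilinear pairings, each dualized via the self-duality of the PSD cone, the nonnegative orthant, and $\mathcal{SOC}(S_r)$, is exactly the calculation the paper is pointing to, and your closing step---dualizing the inclusion from Theorem~\ref{thm4} to obtain $\mathcal{C}^*(\mathcal{K}_k) \subseteq \mathcal{OA}_1(\mathcal{K}_k)$---completes the outer-approximation claim.
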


Replacing the completely positive cones $\mathcal C^*(\mathcal K_k), \ k \in [K]$, in the constraints of the problems $\eqref{eq:benders_sp}$ and~$\eqref{eq:benders_fea_cut}$ with the outer approximations $\mathcal OA_0 (\mathcal K_k)$ and $\mathcal OA_1 (\mathcal K_k), \ k \in [K]$, respectively, provides us with tractable semidefinite programs in the decomposition algorithm. It is evident that the outer semidefinite approximation $\mathcal{OA}_0 (\mathcal K_k)$ leads to a simpler semidefinite program with less computation cost. Furthermore, $\mathcal{OA}_0 (\mathcal K_k)$ can be shown as a better approximation scheme balancing the out-of-sample performance and computational effort, which is consistent with the inner approximation $\mathcal{IA}_0 (\mathcal K_k)$.

\section{Facility Location Problem}

\subsection{Problem Description} 
We study a facility location problem with $I$ facilities and $J$ customer locations. We want first to select the subset of $I$ facilities to open before knowing the customers' demands $d_j$, $j \in [J]$. After the realizations of the demands $\bm d$, we decide the amount of products to serve customers with the opened facilities, in order to minimize the total fixed costs of allocating facilities and the expected transportation costs to serve customers. Note that in practice, it is possible that the total demands exceed the total supplies. In this circumstance, out-of-stock loss is incurred. We have the binary first-stage decision variables, i.e., $\bm x \in \{ 0,1\}^I$, where $x_i = 1$ indicates opening facility $i$ while $x_i = 0$ means not using facility $i$. The second-stage decision variable~$y_{ij}$ determines the fraction of customer location $j$'s demand fulfilled by the $i^{\textup{th}}$ facility, and $w_{j}$ represents the fraction of customer location $j$'s demand unsatisfied by opened facilities as each facility has its production capacity $u_i$. The two-stage optimization model is given by
\begin{equation}
\label{exp:facility-location-problem}
\begin{array}{clll}
  \displaystyle  \text{min} & \displaystyle \bm f^\top \bm x + \sup_{\PP\in\mathcal P}\EE_\PP [Z (\bm x, \bm d)] \\
  \st & \bm x \in \{0,1\}^{I}, \\ 
\end{array}
\end{equation}
where 
\begin{equation*}
\begin{array}{ccll}
  \displaystyle Z(\bm x, \bm d) \coloneqq & \displaystyle \text{min} & \displaystyle \sum_{i \in [I]} \sum_{j \in [J]} c_{ij} d_{j} y_{ij} + \sum_{j \in [J]} g_{j} d_{j} w_{j} \\
  & \st & \bm y \in \mathbb{R}_{+}^{I \times J}, \ \bm w \in \mathbb{R}_{+}^{J} \\
  &  & \underset{i \in [I]}{\sum} y_{ij} + w_j \geq 1 \quad  \forall j \in [J] \\
  &  & \underset{j \in [J]}{\sum} d_j y_{ij} \leq u_i x_i \quad  \forall i \in [I]. \\ 
\end{array}
\end{equation*}

In the objective function, the fixed cost of opening facility $i$ is denoted as $f_i$. The transportation cost~$c_{ij}$ represents the per-unit cost of using facility $i$ to supply customer location $j$. The unit out-of-stock loss of customer location $j$, denoted by $g_j$, can incorporate either the external supply cost or the customer dissatisfaction due to unsatisfied demand. It should be sufficiently large so that the stock-out cost is invoked only when the total supply from all facilities cannot meet the demand. The first set of constraints ensures that the demands of all customers are satisfied. The second set of constraints requires that the service provided by each facility does not exceed its production capacity, and only selected facilities can service customers.

Similarly to previous experiments, we assume that the only information provided is the historical data of the uncertain demands, $\hat{\bm d}_1,\ldots,\hat{\bm d}_N$, and the description of the support set as a bounded set $ \Xi= \{ \bm d \in \RR^{J}: \linebreak \underline{d}_l \leq d_j \leq \overline{d}_u, \ j \in [J]\} $.

\subsection{Experiments}

The optimization problem \eqref{exp:facility-location-problem} has binary first-stage decision variable $\bm x$. 
After applying PLDR to the second-stage decision variables $\bm y$ and $\bm w$, the problem can be reformulated as a mixed-binary COP. The mixed-binary conic programming problem leads to high computation cost even if it is approximated as a mixed-binary semidefinite program. Thus, we apply the decomposition algorithm, denoted as $\textbf{Benders $\mathbf{C_0}$}$, where the master problem is a mixed-binary SOCP and the subproblems are semidefinite programs. We compare the out-of-sample performance and runtime of $\textbf{Benders $\mathbf{C_0}$}$ with $\textbf{SAA}$.

We consider a facility location problem with $I=5$ available facilities and $J=5$ customer locations. To evaluate the qualities of solutions obtained from $\textbf{Benders $\mathbf{C_0}$}$ and $\textbf{SAA}$, we randomly generate i.i.d. training samples of uncertain demands $\{\hat{\bm d}_i\}_{i \in [N]}$ with sizes $N = 10,20,40,80,160$, and testing data with size 50,000 from the truncated lognormal distribution with means $\bm \mu = 6\mathbf{e}$ and standard deviations $\bm \sigma = \mathbf{e}$. The out-of-sample costs and runtimes are calculated by averaging over 100 trials. The uncertain demands range from the lower bound $\underline{d}_l = 200$ to the upper bound $\overline{d}_u = 3000$. We set the per-unit out-of-stock loss to $\bm g = 1000\mathbf{e}$, while the unit transportation cost, $c_{ij}$, is uniformly chosen in $[10,100]$ and the fixed cost of opening facility $i$, $f_i$, is uniformly chosen in $[4000,5000]$. We have the same capacity of all facilities, i.e., $\bm u = 1500\mathbf{e}$. The robust parameters $\epsilon_k, \ k \in [K]$, describing the ambiguity set are set according to the 2-fold cross validation, and the parameter $\gamma$ is set to be 0. The tolerance level of the decomposition algorithm $\eta$ has the value 0.05.

\subsection{Results Analysis}

\floatsetup{heightadjust=all, floatrowsep=columnsep}
\newfloatcommand{figurebox}{figure}
\newfloatcommand{tablebox}{table}

\begin{figure}
\begin{floatrow}[2]
\figurebox{\caption{The out-of-sample cost as a function of in-sample data size.}\label{fig:FLP_performance}}{\includegraphics[width=7.5cm]{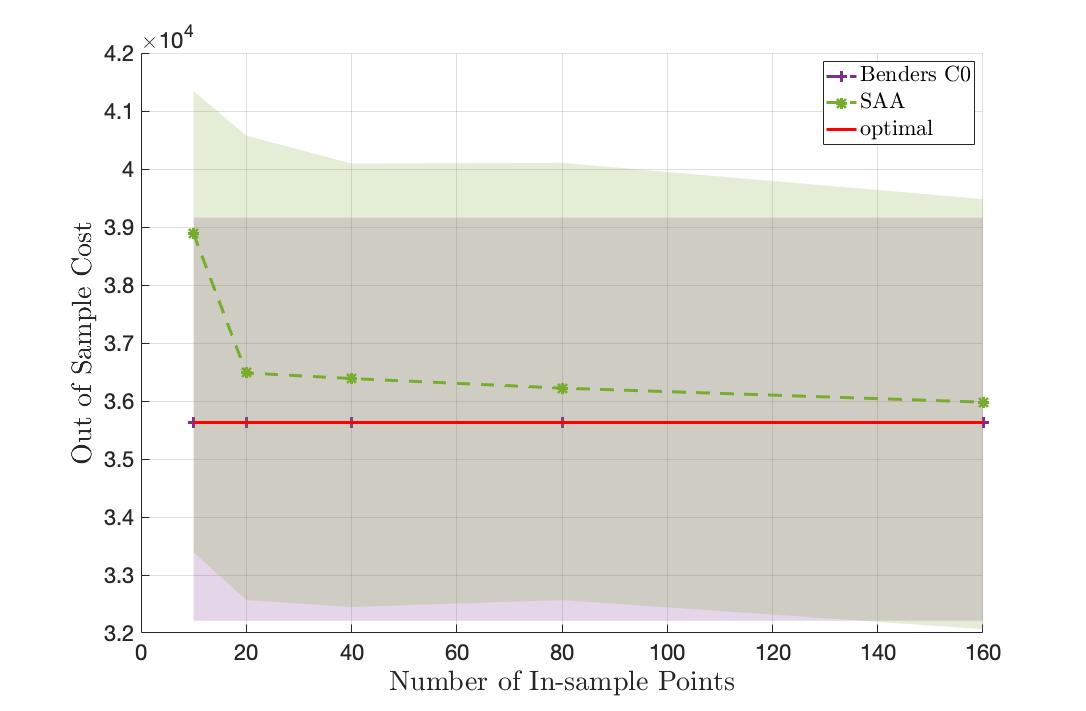}}
\tablebox{\caption{The average runtime of calculated over 100 trials.}\label{tbl:FLP_runtime}}{
      \begin{tabular}{|c|rr|}\hline
    $N$  &  \textbf{Benders $\mathbf{C_0}$} & \textbf{SAA} \\
    [0.0mm] \hline
    $10$  & 0.7684 & 0.0232 \\
    $20$  & 0.8255 & 0.0321 \\
    $40$ &  0.9542 & 0.0587 \\ 
    $80$ &  1.7544 & 0.1034 \\
    $160$ & 4.1704 & 0.1215 \\
\hline\hline
\end{tabular}}
\end{floatrow}
\end{figure}

Figure~\ref{fig:FLP_performance} shows the out-of-sample performances of the two proposed solution methods. It is noticeable that $\textbf{Benders $\mathbf{C_0}$}$ provides the optimal solution even with only $N = 10$ historical data. Meanwhile, $\textbf{SAA}$ has decreasing out-of-sample cost, approaching the optimal value as $N$ increases. When the in-sample data size reaches $N = 160$, the performances of these two methods are getting fairly close. In Table~\ref{tbl:FLP_runtime}, $\textbf{SAA}$ is highly computationally efficient as expected, in the same time, $\textbf{Benders $\mathbf{C_0}$}$ achieves better performance and requires only a few seconds to solve which is attractive. 

\end{document}